\documentclass[12pt]{article}

\usepackage{amsmath,amsthm,amsfonts,amssymb,amscd,epsf,epsfig,psfrag,enumerate}
\usepackage[mathscr]{eucal}
\usepackage{graphicx,epstopdf}
\usepackage{float}


\raggedbottom \textwidth 6in \oddsidemargin .25in \evensidemargin.25in
\textheight 9in \topmargin -0.5in

\title{Measured lamination spaces for surface pairs}

\author{Ulrich Oertel}

\date{Revised September 2013}

\newtheorem{thm}{Theorem}[section] \newtheorem{lemma}[thm]{Lemma}

\newtheorem{corollary}[thm]{Corollary}

\newtheorem{proposition}[thm]{Proposition}
 \newtheorem*{claim*}{Claim}

 \theoremstyle{definition}
\newtheorem{defn}[thm]{Definition}
\newtheorem{defns}[thm]{Definitions} 
 \newtheorem{ex}[thm]{Example}

\theoremstyle{remark}


\begin{document}

\maketitle


\def\HDS{half-disk sum}

\def\Length{\text{Length}}

\def\Area{\text{Area}}
\def\Im{\text{Im}}
\def\im{\text{Im}}
\def\cl{\text{cl}}
\def\rel{\text{ rel }}
\def\irred{irreducible}
\def\half{spinal pair }
\def\spinal{\half}
\def\spinals{\halfs}
\def\halfs{spinal pairs }
\def\reals{\mathbb R}
\def\rationals{\mathbb Q}
\def\complex{\mathbb C}
\def\naturals{\mathbb N}
\def\integers{\mathbb Z}
\def\id{\text{id}}
\def\Chi{\raise1.5pt \hbox{$\chi$}}

\def\proj{P}
\def\hyp {\hbox {\rm {H \kern -2.8ex I}\kern 1.15ex}}

\def\Diff{\text{Diff}}

\def\weight#1#2#3{{#1}\raise2.5pt\hbox{$\centerdot$}\left({#2},{#3}\right)}
\def\intr{{\rm int}}
\def\inter{\ \raise4pt\hbox{$^\circ$}\kern -1.6ex}
\def\Cal{\cal}
\def\from{:}
\def\inverse{^{-1}}
\def\Max{{\rm Max}}
\def\Min{{\rm Min}}
\def\fr{{\rm fr}}
\def\embed{\hookrightarrow}
\def\Genus{{\rm Genus}}
\def\Z{Z}
\def\X{X}

\def\roster{\begin{enumerate}}
\def\endroster{\end{enumerate}}
\def\intersect{\cap}
\def\definition{\begin{defn}}
\def\enddefinition{\end{defn}}
\def\subhead{\subsection\{}
\def\theorem{thm}
\def\endsubhead{\}}
\def\head{\section\{}
\def\endhead{\}}
\def\example{\begin{ex}}
\def\endexample{\end{ex}}
\def\ves{\vs}
\def\mZ{{\mathbb Z}}
\def\M{M(\Phi)}
\def\bdry{\partial}
\def\hop{\vskip 0.15in}
\def\hip{\vskip0.1in}
\def\mathring{\inter}
\def\trip{\vskip 0.09in}
\def\PML{\mathscr{PML}}
\def\H{\mathscr{H}}
\def\C{\mathscr{C}}
\def\S{\mathscr{S}}
\def\T{\mathscr{T}}
\def\E{\mathscr{E}}
\def\K{\mathscr{K}}
\def\BL{\mathscr{BL}}
\def\L{\mathscr{L}}
\def\suchthat{|}
\newcommand\invlimit{\varprojlim}
\newcommand\congruent{\equiv}
\newcommand\modulo[1]{\pmod{#1}}
\def\ML{\mathscr{ML}}
\def\Stack{\mathscr{T}}
\def\M{\mathscr{M}}
\def\A{\mathscr{A}}
\def\union{\cup}
\def\atlas{\mathscr{A}}
\def\interior{\text{Int}}
\def\frontier{\text{Fr}}
\def\composed{\circ}
\def\PC{\mathscr{PC}}
\def\PM{\mathscr{PM}}
\def\D{\mathscr{D}}
\def\WC{\mathscr{WC}}
\def\W{\mathscr{W}}
\def\V{\mathscr{V}}
\def\PV{\mathscr{PV}}

\def\FDM{\mathscr{FDM}}
\def\I{\mathscr{I}}
\def\split{\prec}
\def\pinch{\succ}

\centerline{\bf PRELIMINARY}
\begin{abstract} We calculate a projective space of
essential measured laminations in a surface pair, which will be used in another paper to help describe
spaces of
``finite height laminations."
\end{abstract}

\section{Introduction.}

This paper is intended as a preparation for a study of ``finite depth" or ``finite height" essential measured laminations in surfaces, \cite{UO:Depth}.  In particular, we wish
to describe a suitably projectivized space of finite depth measured laminations.  

\begin{defn}  For simplicity, assume $F$ is an orientable closed surface with $\chi(F)<0$.  A {\it finite depth essential measured lamination} in a closed surface $F$ is a lamination $\displaystyle L=\bigcup_{j=0}^kL_j$ where $(L_0,L_1,\ldots, L_k)$  is finite sequence of measured laminations with $L_i$ embedded in $\displaystyle F\setminus \bigcup_{j<i}L_j$, and such that $\displaystyle \bigcup_{j\le i}L_j$ is an essential lamination in $F$ for each $i\le k$.
\end{defn}

To understand these laminations, we can begin by understanding the {\it level $i$ lamination $L_i$}.  This lamination $L_i$ is embedded in a non-compact surface
$\displaystyle\hat  F_i=F\setminus \bigcup_{j<i}L_j$.   We let $F_i$ denote the possibly non-compact completion of $\hat F_i$. We show a typical component of $S_i$ in Figure \ref{PairSurfFig1}(a), a surface with infinite outward cusps on its boundary.  It is convenient to truncate the outward boundary cusps as shown in Figure \ref{PairSurfFig1}(b) to obtain a compact surface pair $(\bar F_i,\alpha)$.  Some ends of the measured lamination $L_i$ disappear into the cusps, so when we truncate $F_i$ we obtain an essential measured lamination with boundary in the arcs $\alpha$ of truncation, which are bold in the figure.  Of course, we choose the truncation arcs to be efficient with respect to the lamination.

\begin{figure}[H]
\centering
\scalebox{1}{\includegraphics{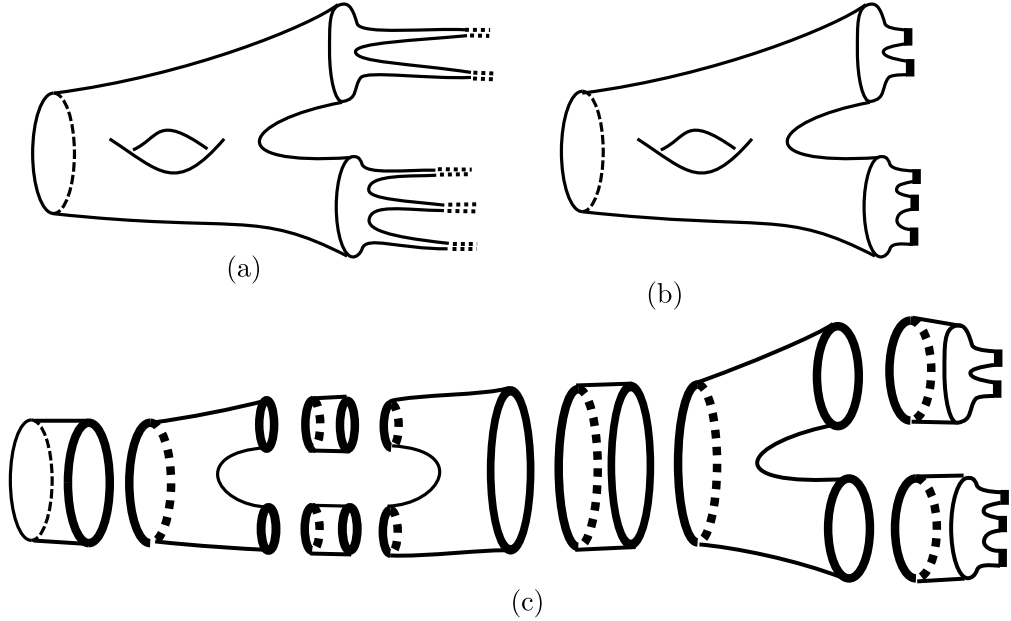}}
\caption{\small A component of $F_i$, its truncation, and its decomposition.}
\label{PairSurfFig1}
\end{figure}

In order to analyze the space of essential measured laminations in $F_i$, we shall use an extension of the method of Allen Hatcher, used in  \cite{AH:SurfaceMLS} and other sources.
This amounts to decomposing the surface into pairs of pants, then tightening the lamination $L_i$ to obtain measured laminations in each pair of pants, then 
reassembling the surface.  An important subtlety here is that when reassembling, one usually needs to reintroduce some ``twist" where two pairs of pants are glued on 
a curve.  Hatcher uses a ``connector annulus" to contain this twist.  The connectors can be seen in the decomposition of $F_i$ shown in  Figure \ref{PairSurfFig1}(c)
as annuli with both boundaries bold.   At the left side of  Figure \ref{PairSurfFig1}(c) we see another type of annular surface in the decomposition, which has one bold boundary curve and one ordinary boundary component.  The lamination $L_i$ may have leaves spiraling toward the left boundary of Figure \ref{PairSurfFig1}(a), so we will use the left annulus of the decomposition of Figure \ref{PairSurfFig1}(c) to contain this spiraling behavior.  

More formally, we are decomposing the truncated $(\bar F_i,\alpha)$ into {\it elementary surface pairs} of the form $(\bar F,\beta)$ where $\bar F$ is a compact surface
with boundary and $\beta$ is a closed submanifold of the boundary consisting of finitely many arcs and closed curves.    We will call $(\bar F,\beta)$ a {\it surface pair}.  

For an
arbitrary orientable surface pair $(\bar S,\alpha)$, it is reasonable to think of each closed curve of $\alpha$ as being obtained by truncating an infinite cusp of a surface $S$ with cusps, or as coming from a decomposition as above.  Our analysis will apply 
to arbitrary orientable surface pairs, which can be interpreted as surfaces with infinite cusps and infinite outward boundary cusps.   
We repeat that to each surface with cusps $S$ we associate by truncation a surface pair $(\bar S,\alpha)$ and vice versa.

Our discussion of finite depth measured laminations was intended only to motivate the study of measured laminations in surface pairs or surfaces with cusps, which is the topic of this paper.   Our goal is to calculate a projective space of measured laminations in a given surface pair $(\bar S,\alpha)$.  This extends the well-known theory due mostly to William Thurston.  In fact, most of the ideas in this paper exist in the literature.  

\begin{defn} If $(\bar S,\alpha)$ is a surface pair, the {\it geometric Euler characteristic} of the pair is defined as $\Chi_g(\bar S,\alpha)=\Chi(\bar S)-\frac 12 c$ where $\Chi$ denotes the usual Euler characteristic
and $c$ is the number of arcs in $\alpha$.   
\end{defn}

\begin{figure}[H]
\centering
\scalebox{1}{\includegraphics{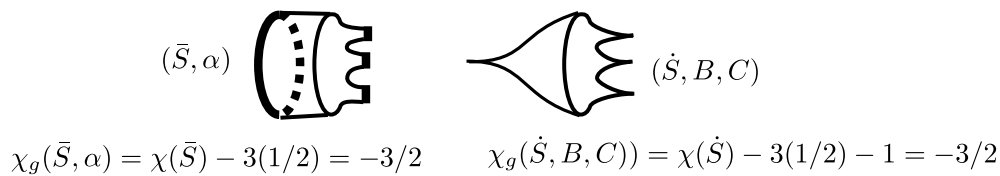}}
\caption{\small The geometric Euler characteristic.}
\label{PairGEulerFig}
\end{figure}

This Euler characteristic can equally well be defined for surfaces with finitely many {\it finite boundary cusps} and finitely many {\it finite cusps}.  Each finite boundary cusp is obtained from an arc of $\alpha$ by collapsing it to a point.  Each finite cusp is obtain from a closed curve of $\alpha$ by collapsing it to a point.  The boundary cusps are imagined as boundary points with zero interior angle and the finite cusps are imagined as cone points, with angle zero, see Figure \ref{PairGEulerFig}.    Thus, a surface pair $(\bar S,\alpha)$ is replaced by a surface $\dot S$ with a finite set $B$ of interior cusps and a set $C$ of boundary cusps.  We can write the surface with finite cusps as a triple $(\dot S,B,C)$ with cusps as shown in the figure.  If $F$ has $c=|C|$ boundary cusps and $b=|B|$ cusps, then $\Chi_g(\dot S,B,C)=\Chi(\dot S)-\frac 12 c -b$.  Here $\Chi(\dot S)$ is the Euler characteristic of the underlying surface $\dot S$.  We now have three versions of our objects of study:  A surface with cusps $S$, a surface pair $(\bar S,\alpha)$ obtained from $S$ by truncation, and a surface with finite cusps $(\dot S,B,C)$.  

By convention a surface pair $(D,\alpha)$ where $D$ is a disk and $\alpha$ consists of $n$ arcs is called an {\it n-gon}, $n=0,1,2,3,4\ldots$, but a 1-gon is also known as a {\it monogon}, while a 2-gon is also known as a {\it digon}.  Instead of ``digon" some authors use ``bigon,"  an awkward mix of Latin and Greek.   Here the model $(\dot D,C)$ with finite boundary cusps $C$ may be more suggestive.

\begin{defns} An {\it embedding of a train track $\tau$ in a surface pair $(\bar S,\alpha)$} is an embedding such that $\tau$ meets $\alpha$ transversely and $\tau$ contains any component of $\delta=\cl(\bdry\bar S-
\alpha)$ intersected by $\tau$.    Corresponding to an embedding of $\tau$ in a surface pair $(\bar S,\alpha)$ we have a {\it fibered neighborhood} of $N(\tau)$ as shown in Figure \ref{PairTrainFig}(a), with the frontier of $N(\tau)$ being the union 
of the horizontal boundary $\bdry_h(\bar\tau)$ and the vertical boundary $\bdry_v(\bar\tau)$ as shown.   The closure of the complement of $N(\tau)$ naturally has the structure of a surface pair $(\bar F,\beta)$ where $\beta$ consists of arcs in the vertical boundary of $N(\bar\tau)$, together with arcs of $\alpha\setminus \intr(N(\tau))$.   There is a projection map $\pi:N(\tau)\to \tau$ which collapses interval fibers to points.   Figure \ref{PairSpiralTrainFig} shows $\tau$ and $N(\tau)$ for a train track carrying a spiral leaf approaching a closed leaf in $\delta$, a key example.   

A lamination $L$ is {\it carried} by $\tau$ if it can be isotoped into $N(\tau)$ so it is transverse to the interval fibers of $N(\tau)$.   We further require that if $\epsilon$ is a component of $\delta$, there is exactly one leaf of $L$ contained in $\pi\inverse(\epsilon)$, and that leaf is (isotopic to) $\epsilon$.  The lamination $L$ is  {\it fully carried} by $\tau$ if in addition it intersects every fiber of $N(\tau)$.   The train track $\tau$ is {\it full} if $\tau$ fully carries some lamination.  \end{defns}

\begin{figure}[H]
\centering
\scalebox{1}{\includegraphics{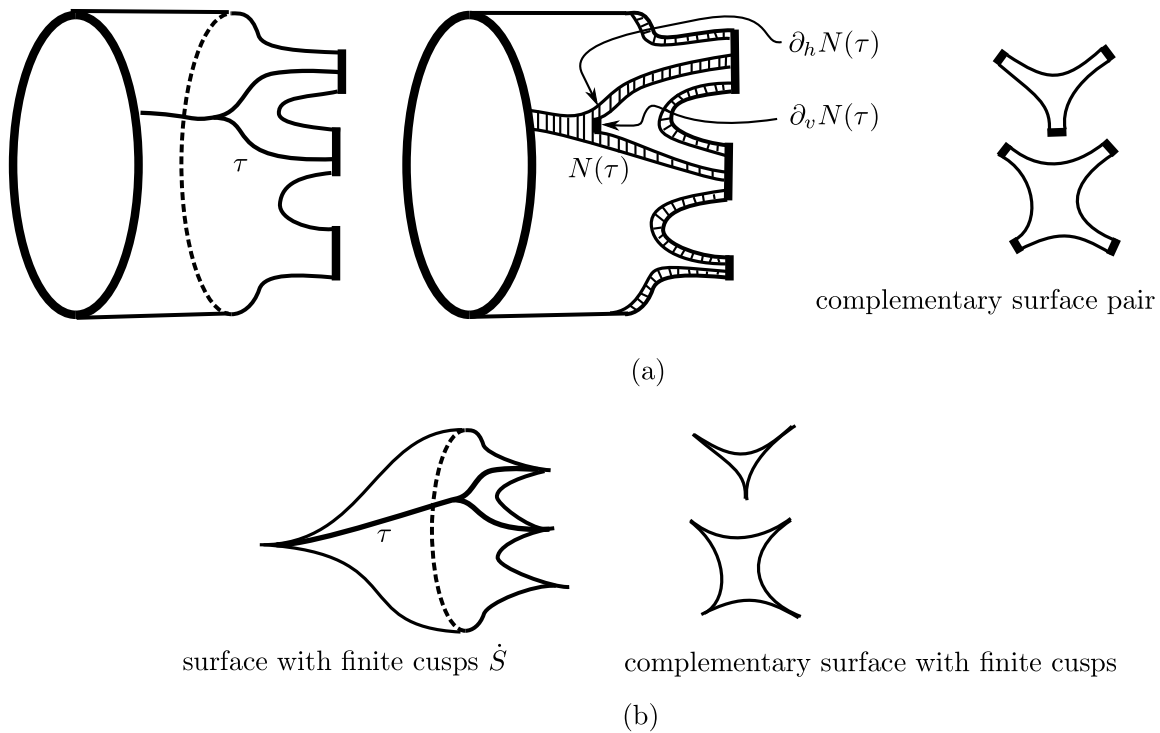}}
\caption{\small Train track in surface pair with complementary surface pair.}
\label{PairTrainFig}
\end{figure}

 The kind of train track shown in Figure \ref{PairSpiralTrainFig} will be important to us when we wish to allow leaves spiraling towards and limiting on a closed curve of $\delta$.

\begin{figure}[H]
\centering
\scalebox{1}{\includegraphics{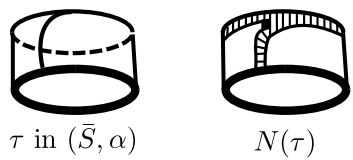}}
\caption{\small Train track for spiral.}
\label{PairSpiralTrainFig}
\end{figure}

Thanks to the above definition, we now have a  geometric Euler characteristic for surfaces in the complement of a train track neighborhood for an embedding $(\tau,\bdry\tau)\embed (\bar S,\alpha)$, as shown in the figure.    Applying the projection $\pi:N(\tau)\to \tau$ and collapsing each component of $\alpha$ to a cusp to replace $(\bar S,\alpha)$ by a cusped surface $(\dot S,B,C)$, the completions of the components of the complement of the train track are surfaces with cusps, whose $\Chi_g$'s we can calculate from our finite cusp point of view, see  Figure \ref{PairTrainFig}(b).

\begin{defns}  A {\it good train track} in $(\bar S,\alpha)$ is a train track $(\tau,\bdry\tau)\embed (\bar S,\alpha)$ with the property that every component of the complementary surface pair has strictly negative $\Chi_g$.  

 If $(\bar S,\alpha)$ is a surface pair with $\Chi_g(\bar S,\alpha)<0$, a lamination $(L,\bdry L)\embed (\bar S,\alpha)$ is {\it essential} if it is carried by a good train track $(\tau,\bdry \tau)$.
 
In general, for $\Chi_g(\bar S,\alpha)\le 0$, a {\it fair train track} is a train track $(\tau,\bdry\tau)\embed (\bar S,\alpha)$ with the property that every component the surface pair complementary to $N(\tau)$ has $\Chi_g\le 0$.

A {\it 2-dimensional Reeb train track (half Reeb train track)  in $\tau$} is an embedding of the train train track $\rho$ ($\sigma$), see Figure \ref{PairReebTTFig}(a)(b), in $N(\tau)$ transverse to fibers such that the two  smooth closed curves in the train track $\rho$ together bound an annulus in $S$ (the one closed curve in the train track $\sigma$ cuts from $(\bar S,\alpha)$ an annulus containing one closed curve of $\alpha$.
We also show in Figure \ref{PairReebTTFig}(c) the half Reeb train track in $(\bar S,\alpha)$ viewed in a surface with finite cusps.

A fair train track which contains no 2-dimensional Reeb train track or half Reeb train track is called an {\it essential train track}.    
 
 If $(\bar S,\alpha)$ is a surface pair with  $\Chi_g(\bar S,\alpha)=0$ a lamination $(L,\bdry L)\embed (\bar S,\alpha)$ is {\it essential} if it is fully carried by an essential train track.

An {\it essential measured lamination} in $(\bar S,\alpha)$ is an essential lamination $L$ in $(\bar S,\alpha)$ such that $L\setminus \delta$ has a transverse measure $\mu$ of full support.  This means the measure is positive on any transversal which intersects $L$.  We will sometimes abuse our definitions by referring to $L\setminus \delta$ as a measured lamination in $(\bar S,\alpha)$ although it may not be closed, hence not a lamination and also not measured in the usual sense where it spirals towards $\delta$.  It {\it is} measured in the usual sense when viewed as a lamination in $\hat S$.   

If $L$ is measured according to the definition above, with measure $\mu$, the lamination $L\cup \delta$ is also measured in a certain sense.  A {\it geometric measure} on $L\cup \delta$  corresponding to the measure $\mu$ is a transverse measure $\nu$ which assigns a non-negative element of the extended reals $\bar\reals$ to each transversal $T$ for the lamination.   If $T\cap \delta=\emptyset$, $\nu(T)=\mu(T)<\infty$.  If $T\cap \delta\ne \emptyset$ then $\nu(T)=\infty$.  
\end{defns}

The reason for the name ``geometric measure" is explained in \cite{UO:Depth}:  The geometric measure on $L\cup \delta$ yields a measured lamination in a surface ``dual to $S$," which approximates a hyperbolic structure on the dual surface.

\begin{figure}[H]
\centering
\scalebox{1}{\includegraphics{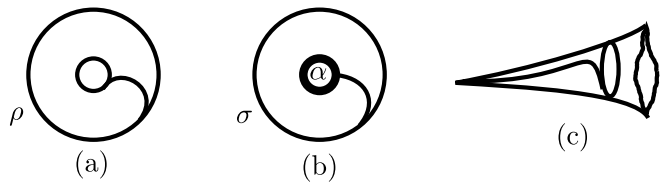}}
\caption{\small Reeb train tracks.}
\label{PairReebTTFig}
\end{figure}

We observe that the train track shown in Figure \ref{PairSpiralTrainFig} is not good, only fair, since its complementary surface is a digon, which has $\Chi_g=0$.  In surface pairs $(\bar S,\alpha)$ such that $\Chi_g(\bar S,\alpha)=0$, no essential lamination is carried by a good train track, which is why we must work with fair and essential train tracks.

We can now return to the strategy for understanding measured laminations in a surface pair $(\bar S,\alpha)$.    Since there is a one-one correspondence between measured laminations of the form $L\setminus\delta$ and the geometric measured laminations $L\cup \delta$, we will both points of view, whichever is most convenient.  The laminations $L$ $\hat S$ can have leaves that spiral to a closed curve in $\delta$.   We wish to define and describe a projective measured lamination space $\PM(\bar S,\alpha)$ for the surface pair $(\bar S,\alpha)$ whose points are projective classes of measured laminations $L$.   The strategy, as we mentioned at the beginning of the introduction, is to choose a decomposition $\D$ of the surface pair.  The decomposition cuts $(\bar S,\alpha)$ into a finite number of of elementary surface pairs, including pairs of pants and topological annuli of various kinds.   We will begin in Section \ref{Complexes} by determining the unprojectivized and projectivized spaces $\M(\bar S,\alpha)$ and $\PM(\bar S,\alpha)$ for the elementary surface pairs.  In many cases, the spaces are actually curve complexes $\PC(\bar S,\alpha)$.  Some of the elementary surface pairs have $\Chi_g=0$.   For arbitrary surface pairs $(\bar S,\alpha)$ with $\Chi_g(\bar S,\alpha)<0$, we will begin by describing in Section \ref{DecompositionPM} a space $\PM_\D(\bar S,\alpha)$ which depends on the decomposition, then in Section \ref{IntersectionNumbers} we will show that 
$\PM_\D(\bar S,\alpha)$ is homeomorphic to a space which does not depend on the decomposition $\D$.   If $\Chi_g(\bar S,\alpha)<0$, we consider a set $\H$ homotopy classes of arcs and closed curves in $(\bar S,\alpha)$, including oriented closed curves of $\delta$, but not including arcs of $\delta$.  We will embed the unprojectivized space $\M_\D(\bar S,\alpha)$ in $\reals^\H$.  Thus we will prove:

\begin{thm}  \label{MainThm} Suppose $(\bar S,\alpha)$ is a connected surface pair satisfying $\Chi_g=\Chi_g(\bar S,\alpha)<0$, with topological Euler characteristic $\Chi(\bar S)=\Chi$.  Suppose $\alpha$ contains $b$ closed curves and $c$ arcs.
Then $\M(\bar S,\alpha)$ is homeomorphic (via a homeomorphism linear on projective equivalence classes) to $\reals^{-3\Chi-b+c}\times \reals_+^b=\reals^{-3\Chi_g-c/2-b}\times \reals_+^b$, where $\reals_+$ denotes $[0,\infty)\subset \bar \reals$.  Thus $\PM(\bar S,\alpha)$ is homeomorphic to the join of a sphere $S^{-3\Chi-b+c-1}=S^{-3\Chi_g-c/2-b-1}$ and a simplex $\Delta^{b-1}$.
\end{thm}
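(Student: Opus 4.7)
The plan is to follow the strategy outlined in the introduction. First choose a decomposition $\D$ of $(\bar S,\alpha)$ into elementary surface pairs: generalized pairs of pants, connector annuli along each interior gluing curve, and exactly one spiral annulus along each of the $b$ closed curves of $\alpha$. Section \ref{Complexes} supplies $\M$ for each elementary piece; these will be assembled into $\M_\D(\bar S,\alpha)$ in Section \ref{DecompositionPM} by imposing, at each interior gluing closed curve, both a weight-matching equation and a real twist parameter, and at each interior gluing arc only the matching equation. At each spiral annulus one obtains a non-negative weight parameter (the total mass spiraling into that component of $\delta$), which produces an $\reals_+$ factor rather than a full $\reals$ factor, accounting for the $\reals_+^b$ in the theorem.

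For the dimension count I would work with the cusped model $(\dot S,B,C)$. A pants decomposition of $\dot S$ has $-\Chi$ pants, $\tfrac12(3(-\Chi)-c-b)$ interior closed gluing curves (since each pants has $3$ cuffs and the $c+b$ cusps appear as unshared cuffs), and a corresponding count of interior arcs. On each pants, $\M$ is a $3$-dimensional orthant of cuff weights satisfying the pants inequalities; on connector annuli we add a twist $\in\reals$; on spiral annuli we add a weight $\in\reals_+$. Bookkeeping then gives the total coordinate count $(-3\Chi-b+c)$ of $\reals$-valued parameters plus $b$ of $\reals_+$-valued parameters. The cleanest way to verify this is to apply $\Chi = \Chi_g + c/2$ so the target dimension is $-3\Chi_g - c/2 - b$, which matches the count of free Dehn--Thurston type coordinates (weight plus twist at each interior closed curve, weight only at each interior arc) after subtracting the matching equations.

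The main obstacle will be two-fold. First, I must upgrade the cell-wise parameter identification to a \emph{global} homeomorphism that respects the $\reals\times\reals_+$ splitting; boundary behavior where a pants weight reaches zero (so that one pants collapses or the lamination loses a cuff) must be shown to glue compatibly across the decomposition, and in particular the $\reals_+^b$ factors must never interact additively with the $\reals$ factors. Second, and more substantively, Theorem \ref{MainThm} asserts an intrinsic homeomorphism for $\M(\bar S,\alpha)$, while the construction above lives on $\M_\D(\bar S,\alpha)$; the independence of $\D$ established in Section \ref{IntersectionNumbers}, via the embedding of $\M_\D$ into $\reals^\H$ by geometric intersection numbers with the homotopy classes in $\H$, is what converts the parameter count into the intrinsic statement.

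Projectivizing is then a routine exercise: the scaling action on $\reals^{-3\Chi-b+c}\times\reals_+^b$ has all rays meeting the product of a unit sphere in the first factor and a standard simplex in the second exactly once up to passage through the two ``pure'' subspaces, which realizes $\PM(\bar S,\alpha)$ as the join $S^{-3\Chi-b+c-1} * \Delta^{b-1}$, with the promised linearity on projective equivalence classes built in.
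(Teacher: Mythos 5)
Your overall strategy---decompose $(\bar S,\alpha)$ into elementary pieces, compute $\M_\D(\bar S,\alpha)$ from the pieces, then use the intersection map $\I$ into $\reals^\H$ to remove the dependence on $\D$---is the paper's strategy, but the step that produces the $\reals_+^b$ factor contains a genuine error, not just loose bookkeeping. You attach a ``spiral annulus along each of the $b$ closed curves of $\alpha$'' and claim each contributes $\reals_+$ because spiraling mass is non-negative. This conflates two different kinds of boundary circles. Spiraling occurs only toward closed curves of $\delta=\cl(\bdry\bar S\setminus\alpha)$, and the corresponding elementary piece $T_\emptyset$ has $\C(T_\emptyset)\cong\reals$, \emph{not} $\reals_+$: there are two senses of spiraling, glued at the empty lamination (Proposition \ref{CS3}), so each closed curve of $\delta$ contributes a full $\reals$ factor, which is absorbed into the $\reals^{-3\Chi-b+c}$ part of the answer via the relation $2m+\ell=3k+r-b$ in the paper's count. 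The $\reals_+^b$ in the theorem comes instead from the $b$ closed curves of $\alpha$ themselves: each is a free boundary circle (a cuff of a pair of pants or the closed $\alpha$-curve of a trim annulus $T_{c_i}$) which the lamination meets transversely with a weight $y\ge 0$, and since no connector is glued there, no matching equation or twist parameter consumes that coordinate---it survives as an honest $\reals_+$. With your assignment the answer would come out wrong (you would get $\reals^b$ from the two senses of spiraling), or else you would lose the ability to detect the sense of spiraling, which the paper recovers precisely by putting the oriented variants $\gamma_+,\gamma_-$ of closed curves of $\delta$ into $\H$.

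Two secondary points. The paper's decomposition glues elementary pieces only along closed curves (connectors are regular neighborhoods of the decomposition curves $C_i$); arcs of $\alpha$ are never gluing loci but are absorbed into trim annuli $T_{c_i}$ with $\C(T_{c_i})\cong\reals_+\times\reals^{c_i-1}$, so your ``matching equation at each interior gluing arc'' does not correspond to anything in the construction. And the passage from $\M_\D(\bar S,\alpha)$ to $\M(\bar S,\alpha)$, which you dispatch in one sentence, is where most of the proof lives: one must show $\I$ is injective (including recovering the twist parameters from $i_\gamma$ and $i_{\gamma'}$) and that each $i_\gamma$ is convex and continuous on the whole weight cone $\V(\tau)$, including its boundary faces, which requires the Splitting Lemma and Lemma \ref{ContinuationLemma}. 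Citing the section is acceptable in a sketch, but the dimension count alone does not prove the theorem.
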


Note that although we use extended reals to define our space, the calculation yields $\M(\bar S,\alpha)$ in terms of products of non-extended real lines.

The above theorem is a generalization of Proposition 1.5 of \cite{AH:SurfaceMLS}.   The proposition in Hatcher's paper is stated as a result, in our notation, about $\PM_\D(\bar S,\alpha)$, but later in the paper it is shown that
the space is independent of the choice of $\D$.  Hatcher's result, like ours, applies only to $(\bar S,\alpha)$ with $\Chi_g(\bar S,\alpha)$ strictly negative.  Unlike ours, it applies only to surface pairs with $\bdry 
S=\alpha$, but it includes the case of non-orientable surface pairs.

Now we explain the consequences of Theorem \ref{MainThm} for finite depth measured laminations in a surface $S$.   For simplicity, we will assume that $S$ is a closed surface, but one could formulate similar results for surface pairs.  Let $\FDM(S)$ denote the {\it set} of finite depth essential measured laminations in $S$.  We  describe a topology for this set in \cite{UO:Depth}.  Let $\FDM_k(S)$ denote the subset of laminations of  depth $\le k$.   Then there is a projection map $\Pi:\FDM_k(S)\to \FDM_{k-1}(S)$, which deletes the level $k$ measured lamination $L_k$ in the definition of a finite depth measured lamination.

\begin{corollary}  Let $ S$ be a closed surface and let $\displaystyle L=\bigcup_{j=0}^{k-1}L_j\in \FDM_{k-1}(S)$ and suppose the completion of $S\setminus L$ can be represented as a surface pair $ (\bar F,\beta)$.  Then the preimage $\Pi\inverse(\{L\})$ can be identified with $\M(\bar F,\beta)$.
\end{corollary}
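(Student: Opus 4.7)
The plan is to construct explicit mutually inverse maps between $\Pi\inverse(\{L\})$ and $\M(\bar F,\beta)$ using the truncation/completion dictionary set up in the introduction, and then verify that the essentiality conditions on the two sides correspond. Roughly, with $L$ fixed, a depth-$k$ extension consists of the extra data of a measured lamination on the completion of $S\setminus L$; by hypothesis this completion is precisely $(\bar F,\beta)$, and its measured lamination space is $\M(\bar F,\beta)$.

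First, a point of $\Pi\inverse(\{L\})$ is a finite depth lamination $L\cup L_k$ in $S$, with $L_k$ a measured lamination embedded in $\hat F_k=S\setminus L$ such that the union is essential. Given $L_k$, I would pass to its image in the completion $F_k$ of $\hat F_k$ and then to the truncation $(\bar F,\beta)$: leaves of $L_k$ spiraling into components of $L$ become leaves spiraling onto closed curves of $\delta=\cl(\bdry\bar F-\alpha)$, leaves escaping into infinite cusps of $\hat F_k$ terminate transversely on arcs of $\alpha$, and the transverse measure carries across (with the geometric-measure extension on $\delta$ when needed). Conversely, given a measured lamination on $(\bar F,\beta)$, I would strip $\delta$ and re-embed the interior back into $\hat F_k\subset S$ to recover an $L_k$. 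These operations are manifestly inverse on underlying point sets and measures, so it suffices to match the essentiality conditions on the two sides.

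The crux, and the main obstacle, is precisely this essentiality matching: $L_k$, viewed in the surface pair $(\bar F,\beta)$, is essential in the surface-pair sense if and only if $L\cup L_k$ is an essential lamination in $S$. The geometric Euler characteristic $\Chi_g$ was defined exactly so that a complementary region of a fibered train-track neighborhood $N(\tau)\subset(\bar F,\beta)$ has the same $\Chi_g$ as the corresponding complementary region of $N(\tau)\cup L$ in $S$, with the truncation arcs of $\alpha$ and the cusps absorbing the $\tfrac12 c+b$ correction. Hence good, fair, and essential train tracks in $(\bar F,\beta)$ correspond exactly to train-track structures for $L\cup L_k$ in $S$ whose complementary regions have strictly negative (respectively nonpositive) Euler characteristic, while $2$-dimensional Reeb and half Reeb subtrain tracks in the pair correspond to Reeb-type annuli meeting $L\cup L_k$ in $S$; this is the bookkeeping one must carry out in detail. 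Once established, the bijection gives the desired set-theoretic identification, and one checks directly from the construction that both directions are continuous for the topology on $\FDM_k(S)$ promised by \cite{UO:Depth}, so the identification is a homeomorphism.
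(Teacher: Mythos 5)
Your proposal follows exactly the route the paper intends: the corollary is stated without a separate proof, resting on the truncation/completion dictionary of the introduction (level-$k$ laminations in $S\setminus L$ become essential measured laminations in the truncated pair $(\bar F,\beta)$) together with the identification of the set of such laminations with $\M(\bar F,\beta)$ via Proposition \ref{InjectiveProp} and Theorem \ref{MainThm}. Your added remarks on matching $\Chi_g$ and Reeb conditions are consistent with, and no less detailed than, what the paper itself supplies.
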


I worked with two undergraduate students, Mohammed Iddrisu and Sharwri\\ Phutane on understanding the curve complexes in Section \ref{Complexes}.  I explained many of the ideas related to measured laminations to them.  The interaction with these students was very helpful to me.  I also thank Allen Hatcher, who pointed me to some known results.

\section{Curve complexes for elementary surface pairs.}\label{Complexes}  In order to determine projective measured lamination spaces of
surface pairs, we will need an understanding of projective measured lamination spaces of  certain
{\it elementary surface pairs}, which will be used to deal with the general case.   For most of these 
elementary surface pairs, the projective measured lamination space is the same as the projective curve complex.
We recall that our laminations in a surface pair $(\bar S,\alpha)$ can include the components of $\delta =\cl(\bdry \bar S-\alpha)$ but we can ignore these and consider measured ``laminations" disjoint from $\delta$.  Then an {\it essential curve system} is a system $C$ of disjointly embedded
arcs and closed curves such that $C$ with no curve isotopic to an arc or closed curve isotopic to an arc or closed curve in $\alpha$ or $\delta$.
Assigning weights to the curves of $C$, we obtain a measured lamination.  Adding $\delta$ to $C$ with atomic $\infty$ transverse measure on $\delta$, we obtain a lamination with geometric transverse measure.

We let $\C(\bar S,\alpha)$ denote the set of curve systems of essential curves.   If $\Chi_g(\bar S,\alpha)<0$ we can work with good train tracks;  interpreting $C\cup \delta$ as a train track,
the train track is good if complementary surface pairs have negative $\Chi_g$.  In any case, whether $\Chi_g(\bar S,\alpha)=0$ or $\Chi_g(\bar S,\alpha)<0$,
$C$ cannot contain any arcs or closed curves isotopic to components of $\delta$ or $\alpha$.

\begin{defn}  The space $\WC(\bar S,\alpha)$ is the space of weighted essential curve systems in $(\bar S,\alpha)$.  For every collection of
$k$ disjointly embedded essential curves (arcs or closed curves none of which is isotopic to a component of $\delta$) in $(\bar S,\alpha)$, a {\it curve system} $C$ say, the space contains the cone on a  $k-1$-simplex, i.e. points in the
first orthant  corresponding to weights $x_i\ge0 $ on each of the $k$ curves in $C$.  Two of these cones corresponding to curve systems $C_1$ and $C_2$ are identified on a sub-cone corresponding to 
the curve systems $C_3$ consisting of curves common to $C_1$ and $C_2$ (if any).  If we projectivize this space, we obtain the {\it curve complex} for $(\bar S,\alpha)$,
which we denote $\PC(\bar S,\alpha)$.  The curve complex is assembled from $k-1$-simplices of different dimensions, each corresponding to an essential curve system consisting of $k$ disjointly
embedded curves in $(\bar S,\alpha)$.  Two of these simplices are identified on a face (which could equal one of the simplices) corresponding to the system of common curves (if any).
\end{defn}

\begin{defns} 
Suppose $(\tau,\bdry \tau)\embed (\bar S,\alpha)$ is a train track.  An {\it invariant weight vector} is a vector $\bar w$ assigning a weight $w_i\in \bar\reals$ to each segment of $ \tau$ such that all switch equations hold and such that the weights on segments of $\tau\cap \delta$ are $\infty$.   We let $w$ be the weights on segments of $\tau\setminus\delta$, still satisfying switch equations.  We say $w$ is also an {\it invariant weight vector on $\tau$} with the understanding that a weight vector without the over-bar is a weight vector assigning weights only to segments of $\tau\setminus\delta$.  We let $\V(\tau)$ denote the cone of invariant weight vectors $w$ assigning weights to segments of $\tau\setminus \delta$.  This is a cone in the first orthant of $\reals^k$.  $\PV(\tau)$ is the projectivized $\V(\tau)$.  It is a convex polyhedron in the standard $(k-1)$- simplex in $\reals^k$.   We let $\V_\rationals(\tau)$ denote the set of rational weights in the cone.
\end{defns}

\begin{proposition}  Given a train track $(\tau,\bdry \tau)\embed(\bar S,\alpha)$, an invariant weight vector $x$ for $\tau$ (not assigning weights to segments in $\delta$) uniquely determines a measured lamination $(L,\mu)$ in $\hat S=S\setminus \delta$ with the property that $L\cup \delta$ is a geometric measured lamination carried by $\tau\cup \delta$.\end{proposition}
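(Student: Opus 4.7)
The plan is to adapt the classical construction (due to Thurston) of a measured lamination carried by a train track from a set of invariant weights, with the novelty being the treatment of closed curves in $\delta$ contained in $\tau$ with infinite weight. First I would work in the fibered neighborhood $N(\tau)$. For each branch $b$ of $\tau\setminus\delta$ with finite weight $x_b$, identify the fibered rectangle $\pi\inverse(b)$ with $b\times[0,x_b]$ (so that the invariant weight is the height of the rectangle) and take the horizontal sub-intervals as the leaf segments crossing this rectangle. At each switch in the interior of $\hat S$, the switch equation $\sum_{\text{in}} x_b=\sum_{\text{out}} x_b$ provides a measure-preserving identification of the ends of the incoming rectangles with those of the outgoing ones, so the horizontal strips glue into continuous leaf segments.

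Next I would handle the switches on $\delta$. Where a branch $b$ of $\tau\setminus\delta$ meets a closed curve $\epsilon\subset\delta$, the switch equation $\infty=\infty+x_b$ holds vacuously. In $\hat S$ the branch $b$ does not terminate but spirals around $\epsilon$; in $N(\tau)\setminus\delta$ the rectangle $\pi\inverse(b)$ and the half-open annular piece $\pi\inverse(\epsilon)\setminus\epsilon$ merge, so that the width-$x_b$ strip of horizontal leaves from $\pi\inverse(b)$ wraps infinitely many times around $\epsilon$, with each leaf proper in $\hat S$ and limiting on $\epsilon$ but not meeting it. Properness in $\hat S$ forces this spiraling to be the only way to extend the leaves past the switch, so no additional choice is involved there.

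Then I would verify the remaining properties: $L\subset\hat S$ is closed in $\hat S$ because each spiral end is proper, so it is a lamination; the Lebesgue measure on the vertical parameter intervals descends to a well-defined transverse measure $\mu$ since the switch equations ensure consistency across switches; and $L\cup\delta$ is closed in $S$ and transverse to the fibers of the extended neighborhood $N(\tau\cup\delta)$, so it is carried by $\tau\cup\delta$ in the sense of the definition. The geometric measure $\nu$ with $\nu(T)=\mu(T)$ for $T$ disjoint from $\delta$ and $\nu(T)=\infty$ otherwise then matches the convention of $\infty$-weights on $\delta$-branches, so $L\cup\delta$ is a geometric measured lamination carried by $\tau\cup\delta$.

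For uniqueness, the standard argument is that any two laminations in $N(\tau)$ transverse to fibers and inducing the same branch weights are related by a fiber-preserving ambient isotopy of $N(\tau)$, since within each rectangle the measure determines the pattern of leaves up to isotopy and the matchings at switches are rigid once the weights agree. The only potentially delicate point is uniqueness of the spiraling portion at switches on $\delta$, but this is again pinned down by the weight $x_b$ on the incoming branch together with properness of leaves in $\hat S$. The main obstacle is therefore making precise the local geometric picture near switches on $\delta$, i.e.\ the spiral model of Figure \ref{PairSpiralTrainFig}, and checking that the spiraling pattern assembled there is independent of the choices made.
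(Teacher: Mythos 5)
Your construction away from $\delta$ and your uniqueness argument there are the standard ones, but the heart of this proposition is the local analysis at $\delta$, and there your picture is not correct. A closed curve $\kappa\subset\delta$ can have several branches of $\tau\setminus\delta$ attached to it, with both senses of branching. Writing $\epsilon_i=\pm 1$ for the sense of the $i$-th attached branch and $x_i$ for its weight, the lamination in a neighborhood of $\kappa$ must contain leaves that enter along one branch, run parallel to $\kappa$ for a while, and exit along a branch of the opposite sense; only the net amount $\left|\sum_i\epsilon_i x_i\right|$ of transverse measure actually spirals onto $\kappa$ (and if $\sum_i\epsilon_i x_i=0$ nothing spirals at all). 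Your prescription that the whole width-$x_b$ strip from every attached branch ``wraps infinitely many times around $\epsilon$'' therefore produces the wrong lamination whenever two branches attach with opposite senses, and is not even well defined there, since oppositely-directed spiraling strips limiting on the same curve would have to cross each other. The paper instead assigns finite weights to the segments of $\kappa$ between switches, namely the partial sums of $\sum_i\epsilon_i x_i$ taken in cyclic order, normalized (by choice of starting point and, if necessary, orientation) to be non-negative with at least one equal to zero; exactly one switch equation then fails, by the amount $\sum_i\epsilon_i x_i$, and it is this excess, a class in $H^1(\kappa,\reals)$, that is pushed toward $\kappa$ and produces the spiraling.

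Two further consequences. First, you never treat arcs of $\delta$, where the same partial-sum assignment is needed (with an initial weight $x_0$ chosen so that all partial sums are $\ge 0$ and at least one vanishes) and where spiraling is impossible. Second, your claim that ``no additional choice is involved'' at $\delta$ hides the real issue: the weight vector $x$ assigns nothing to the segments of $\delta$, so a priori there is a one-parameter family of extensions per component of $\delta$, differing by leaves running parallel to that component, and the asserted uniqueness only holds after the normalization just described. That normalization needs to be made explicit; an appeal to properness of the leaves does not supply it.
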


\begin{proof}  We are claiming that the weight vector
$x$ which assigns a weight $x_i$ to each segment $\sigma_i$ of $\tau\setminus\delta$ such that switch equations are satisfied, determines a measured lamination in $S\setminus\delta$ which may have leaves spiraling to a closed curve of $\delta$ and limiting on that curve.   The elementary theory of train tracks, shows that $x$ determines a measured lamination in $N(\tau\setminus\delta)$, but we must show how to extend it to $\tau$, with possible spiraling at closed components of $\delta$.

Consider first an arc $\kappa$ of $\delta$ with an orientation induced from the orientation of $S$.  We will assign weights to the segments of a regular neighborhood of $\kappa$ in $\tau$.   Suppose there are $k$ segments of  that regular neighborhood of $\tau$ attached to $\kappa$, with weights $x_i$, $i=1,\ldots k$. Suppose the weight (not yet determined) is $x_0\ge 0$ at the segment containing the initial point of $\kappa$.  Then on subsequent segments of $\kappa$ the weights must be the partial sums of the entire sum
$$x_0+\sum_{i=1}^k \epsilon_i x_i,$$
\noindent including the first term.   Here $\epsilon_i=\pm1$ indicates the sense of branching, see Figure \ref{PairExtendFig}(a).   We choose $x_0$ so that all the partial sums are $\ge 0$ and at least one partial sum is equal to $0$.  Then the weights on $\kappa$ extend the measured lamination as required.  

\begin{figure}[H]
\centering
\scalebox{1}{\includegraphics{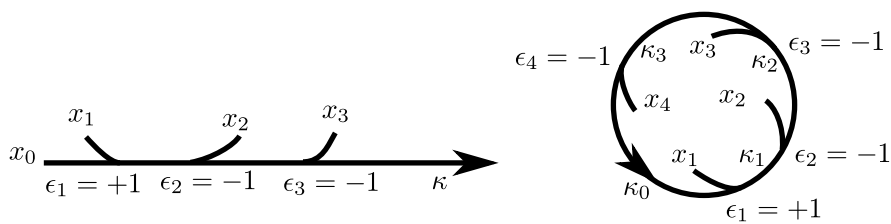}}
\caption{\small Extending the measured lamination near $\delta$.}
\label{PairExtendFig}
\end{figure}

Consider next a closed curve $\kappa$ of $\delta$.  Again, we orient $\kappa$ with orientation induced by the orientation of $S$.  Again suppose  there are $k$ segments of  the regular neighborhood of $\kappa$ in $\tau$ attached to $\kappa$, with weights $x_i$, $i=1,\ldots k$, where we order these segments and weights in cyclically increasing order with respect to the orientation.  See Figure \ref{PairExtendFig}(b).  Suppose $\kappa_0\subset \kappa$ is the segment preceding the switch where the segment with weight $x_1$ attaches to $\kappa$, and order the $k$ segments $\kappa_i$, $i=1,\ldots, k-1$ of $\kappa$ cyclically in increasing order with respect to orientation.  Consider the partial sums of 
$$\sum_{i=1}^k \epsilon_i x_i.$$
\noindent     If all of the partial sums are non-negative, then we assign the weight $0$ to the segment in $\kappa_0$ and we assign the $i$-th partial sum to $\kappa_i$, $i=1,\ldots, k-1$.   If some of the partial sums are negative, we relabel segments to start at a different point in the cycle, and possibly reverse the orientation of $\kappa$,  so that all of the partial sums are non-negative.  The weights on the segments of the regular neighborhood of $\kappa$ in $\tau$ do not in general define a measured lamination on the train track, because one switch equation (on the switch corresponding to $x_k$), may not be satisfied.   Instead we obtain a ``measured lamination with leaves spiraling towards $\kappa$" by pushing excess measure toward $\kappa$.   The set of switches  with weights $x_i$ and signs $\epsilon_i$ determine a cohomology class in $H^1(\kappa,\reals)$ which is unchanged when we perform appropriate splitting or pinching operations on the train track with weights.

From the weights on $\tau$, we have constructed a measured lamination in a regular neighborhood $N(\delta)\setminus \delta$ of $\delta$ in $\bar S$. The weights on $\tau\setminus N(\delta)$ determine a measured lamination on the remainder of $\bar S$, so combining these we obtain a measured lamination $(L,\mu)$ in $\hat S$.  Finally observe that if we replace on all segments in $\delta$ by $\infty$, then the weights on $\tau$ represent the union of $L\cup \delta$ with atomic infinite transverse measure on $\delta$, which yields the geometric transverse measure.
\end{proof}

\begin{defn} The notation $\tau(x)$ denotes the measured lamination $(L,\mu)$ in $\hat S$ constructed above.  We use $\tau(\bar x)$ to denote the geometric measured lamination determined by the invariant weight vector with $\infty$ entries on $\delta$.  Thus $\tau(\bar x)$ is the measured lamination $(L,\mu)$ union $\delta$ with infinite transverse measure on arcs with at least one endpoint in $\delta$.  \end{defn}

Using the previous proposition, we can show that if $(\bar S,\alpha)$ is a surface pair with the complement $\delta$ in  $\bdry S$ of $\alpha$ containing no closed curves, then any essential measured lamination $(L,\mu)$ is carried by a train track $\tau$ in $(\bar S,\alpha)$ such that every component of $\delta$ is also a component of $\tau$.   In other words, there is no interaction of the measured part of $L$ with $\delta$:

\begin{proposition}  An essential measured lamination $(L,\mu)$ in a surface pair $(\bar S,\alpha)$ is carried by a train track $\tau$ with no switches on any arc of $\delta$.
\end{proposition}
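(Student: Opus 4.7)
The plan is to begin with an arbitrary train track $\tau_0$ carrying $(L,\mu)$, whose existence is guaranteed by essentiality of $L$, and to modify $\tau_0$ in a neighborhood of each arc of $\delta$ so as to eliminate any switches lying on that arc, while preserving the carried measured lamination. Fix an arc $\kappa\subset\delta$, whose two endpoints lie on arcs of $\alpha$, and suppose $\tau_0$ has switches $s_1,\ldots,s_m$ along $\kappa$ where interior branches $\sigma_1,\ldots,\sigma_m$ attach with weights $y_i$ and branching signs $\epsilon_i$, as in Figure \ref{PairExtendFig}(a).

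The key geometric observation is that, since $\kappa$ is an arc rather than a closed curve, a regular neighborhood of $\kappa$ in $\bar S$ is a rectangle $R\cong\kappa\times[0,\epsilon)$, and no leaf of $L$ other than $\kappa$ itself can asymptote onto $\kappa$ within $R$: there is no closed curve to spiral around to produce such accumulation, and $\kappa$ is compact with endpoints on $\alpha$. After shrinking $\epsilon$, I can therefore assume $L\cap R$ consists of $\kappa$ together with a (possibly $\mu$-positive) family of leaves parallel to $\kappa$, each properly embedded with endpoints on the two arcs of $\alpha$ abutting $\kappa$. By the preceding proposition, the weights carried by these parallel leaves are the partial sums $x_0+\sum_{i=1}^{j}\epsilon_i y_i$ for an appropriate choice of $x_0$.

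I would then replace $\tau_0\cap R$ by $\kappa$ together with a single disjoint arc $\kappa'$ parallel to $\kappa$ carrying the total weight of these parallel leaves, with endpoints routed across $\bdry R\cap\alpha$ to match the exterior ends of the former branches $\sigma_i$. Since $\kappa'$ is disjoint from $\kappa$, the modified train track has no switches on $\kappa$, and the partial-sum formula preserves the switch equations of $\tau_0$ outside $R$. Iterating this replacement over every arc of $\delta$ produces the desired train track $\tau$, which carries $(L,\mu)$ and has no switches on any arc of $\delta$.

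The main obstacle is justifying the geometric claim that no leaf of $L$ limits on the interior of $\kappa$: a limiting leaf would have to be a simple $1$-submanifold accumulating densely near $\kappa$, but because $\kappa$ is compact and is an arc rather than a closed curve, there is no spiraling mechanism to produce such accumulation. A secondary concern is verifying that the exterior matching across $\bdry R\cap\alpha$ yields a well-defined train track still carrying $(L,\mu)$; this is handled by the partial-sum formula of the preceding proposition, which records exactly how the weights of the $\sigma_i$ combine into weights on segments of $\kappa$, hence on the replacement arc $\kappa'$.
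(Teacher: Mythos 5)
The paper does not actually write out a proof of this proposition: it leaves it as an exercise, with the hint that it follows by splitting train tracks as in Definition \ref{SplitRespectDef}. Your overall strategy --- modify a carrying train track $\tau_0$ in a neighborhood $R$ of each arc $\kappa$ of $\delta$, after arguing that no leaf of $L$ accumulates on $\kappa$ --- is consistent with that hint, but your local model of $L$ near $\kappa$ is wrong, and the replacement step built on it fails. A leaf of $L\cap R$ ``parallel to $\kappa$, properly embedded with endpoints on the two arcs of $\alpha$ abutting $\kappa$'' would be an arc cutting off a digon with $\kappa$, i.e.\ isotopic into $\delta$; such a leaf is inessential and cannot occur. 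The leaves that actually come close to $\kappa$ run along proper sub-arcs of $\kappa$ only, entering and exiting $R$ through the interior branches $\sigma_i$ --- which is exactly why consecutive segments of $\kappa$ carry the \emph{distinct} partial sums $x_0+\sum_{i\le j}\epsilon_i x_i$ (at least one of them $0$) rather than a single common value, as a family of end-to-end parallel leaves would force. Your citation of the partial-sum formula is therefore inconsistent with your own local picture. Consequently there is no ``total weight of these parallel leaves'' to place on a single arc $\kappa'$: a single disjoint parallel arc has two ends and one weight, while you must reconnect $m$ branches carrying $m$ distinct partial sums, and the switch equations at the former switches are not preserved by your replacement. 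What the splitting operation actually produces is $\kappa$ split off as an isolated branch together with a combed copy of the old pattern --- the $\sigma_i$ joined to one another by new branches parallel to portions of $\kappa$ --- which is a train track that still has switches, just none of them on $\kappa$.

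The non-accumulation claim you identify as the main obstacle is true, but ``there is no spiraling mechanism'' is not an argument. The honest proof uses the local product structure of a lamination: cover the compact arc $\kappa$ by a finite chain of product charts; any leaf entering a sufficiently small collar of $\kappa$ must shadow $\kappa$ plaque-by-plaque from one endpoint to the other, hence contains an arc parallel to $\kappa$ with endpoints on $\alpha$, which is inessential. Hence $L\cap R=\kappa$ for $R$ small enough, and one may split $N(\tau_0)$ along that collar of the interstitial bundle. (For a closed curve of $\delta$ the shadowing argument fails precisely because the shadowing strand can wrap around indefinitely; that is the spiraling the paper accommodates with the geometric measure, and it is why the proposition is stated only for arcs of $\delta$.) With the local model corrected and this accumulation argument supplied, your outline becomes the splitting proof the paper intends.
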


We will leave the proof of the proposition as an exercise.  It can be proved by splitting train tracks as in Definition \ref{SplitRespectDef}.

We can imitate the above definition of a curve complex in order to define some spaces which contain measured laminations as well as weighted curve systems.

\begin{defn} A collection $\T$ of (isotopy classes of) full train tracks embedded in $(\bar S,\alpha)$ is {\it closed} if it is closed under the operation of passing to full sub-train-tracks.
Corresponding to a closed collection of train tracks, we define $\PM_\T(\bar S,\alpha)$, a complex constructed as follows.  For every train track  $\tau\in\T$ we 
we include $\PV(\tau)$.  If $\tau_1$ and $\tau_2$ are train tracks sharing (up to isotopy) a common full sub-train-track $\tau_3$, which may be the same as $\tau_1$ or $\tau_2$, then we identify subcomplexes of  $\PV(\tau_1)$ and $\PV(\tau_2)$ corresponding to $\tau_3$.  We define $\M_\T(\bar S,\alpha)$ similarly.  

We say the collection $\T$ is {\it bijective} if every essential measured lamination is fully carried by exactly one train track $\tau$ in the collection, and there is only one weight vector on $\tau$ representing the lamination.
\end{defn}

Note that given any collection $\T$ of full train tracks in $(\bar S,\alpha)$, we can form a closed collection just by adding to the collection all full sub-train-tracks of the train tracks in the collection.  This property
is therefore not particularly interesting by itself.  Starting from a decomposition $\D$ of a surface pair we will produce a collection $\T$ of ``standard" train tracks which is not only closed, but also, as we shall eventually see, bijective.  We construct this system $\T$ by first constructing a similar system for each elementary surface pair coming from the decomposition $\D$.  At the same time, we will calculate measured lamination spaces for each of these elementary surface pairs.

The first cusped surface we will consider is $T_c$, which is a pair $T_c=(\bar S,\alpha)$, where $\bar S$ is an annulus, and
$\alpha$ consists of $c>0$ pairwise disjoint closed arcs in one boundary component union the other boundary component.  The notation comes
from the fact that we consider $T_c$ as a ``trim annulus" attached to a topological surface at a boundary component to obtain a surface pair with arcs in $\alpha$
in the corresponding boundary component.
The weighted arc systems in $T_c$ will be parametrized by the weights $x_1, x_2, \ldots, x_c$ induced on the arcs $\alpha_i$ of $\alpha$ together with
the weight $y$ induced on the closed curve of $\alpha$.

\begin{proposition}\label{CS1} The space $\PC(T_c)=\PC(\bar S,\alpha)$, $c>0$, embeds in the standard $c$-simplex $\Delta^c$ in $\reals^{c+1}$, with vertices corresponding to the 
parameters $x_1,x_2,\ldots, x_c,y$, as the union of the following subsets:

\begin{itemize}
\item The $(c-1)$-simplex $\{x_1+x_2+x_3+\cdots+ x_c=y\}$.

\item The subsets $\{x_i=0,\ x_1+x_2+x_3+\cdots+ x_c\ge y\}$, for $i=1,2\ldots c$.

\end{itemize}

\noindent The curve complex $\PC(T_c)$ is finite of dimension $c$ and can be expressed as a finite union of $c$-simplices.  For each simplex in the curve 
complex, we can choose a {\it elementary standard train track} in $T_c$ which fully carries all curve systems corresponding to interior points in the simplex, and which 
intersects each component of $\alpha$ in at most one point.  These elementary standard train tracks can be chosen to form a closed bijective collection $\T$ of train tracks in $T_c$.
\hip

\noindent The unprojectivized space $\C(T_c)$ is piecewise linearly homeomorphic to $\reals_+\times \reals^{c-1}$, with the parameter $y$ corresponding
to the $\reals_+=[0,\infty)\subset \reals$.  Also, $\C(T_c)$ can be identified with $\M_\T(T_c)$
\end{proposition}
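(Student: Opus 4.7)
The plan is to enumerate the essential simple arcs in $T_c$, determine the maximal disjoint essential curve systems, compute the cone of weight vectors each contributes, and check that their union projectivizes to the two families of subsets in the statement. Because $T_c$ is planar with $\pi_1\cong\mathbb Z$, the essential simple arcs (with endpoints in $\alpha$) fall into two families: the arcs $\beta_i$ from the outer closed curve of $\alpha$ to $\alpha_i$ (one isotopy class per $i$), and the inner–inner arcs with endpoints on some $\alpha_i,\alpha_j$. Computing $\Chi_g$ on the two sides of an inner–inner arc and applying the good-train-track definition of essential, the only essential inner–inner arcs are the wrapping arcs $\gamma_{ij}$ that traverse the annulus once around the core before closing on $\alpha_j$. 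For $c=1$ even the unique wrapping arc is isotopic to the outer closed curve of $\alpha$ and is therefore inessential, which accounts for the expected degeneration of $\PC(T_1)$ to a single point.

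Next I record each curve's contribution to the parameters $(x_1,\ldots,x_c,y)$: $\beta_i$ gives one unit to $x_i$ and to $y$, while $\gamma_{ij}$ gives one unit to each of $x_i,x_j$ and nothing to $y$. For a weighted curve system with weights $b_i$ on $\beta_i$ and $a_{ij}$ on $\gamma_{ij}$ one therefore has $y=\sum_i b_i$ and $x_i=b_i+2a_{ii}+\sum_{j\ne i}a_{ij}$, so $\sum_i x_i - y = 2\sum_{i\le j}a_{ij}\ge 0$. Equality holds iff no wrapping arc is present, producing subset (1); a strict inequality forces some $\beta_i$ to be absent from the maximal disjoint system (since a wrapping arc meeting $\alpha_i$ cannot be disjoint from $\beta_i$), hence $x_i=0$, placing the point in subset (2) for that $i$. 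Matching the maximal disjoint configurations to the embedding described in the statement is the main combinatorial check.

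For the train tracks, I construct an elementary standard track $\tau$ for each top-dimensional simplex. For the $(c-1)$-simplex in (1), $\tau$ is a $(c+1)$-valent fan with one interior switch, one branch to each $\alpha_i$ and one branch to the outer closed curve; the switch equation is $\sum x_i=y$ and each component of $\alpha$ is met in exactly one point. For a top-dimensional simplex of subset (2) for $i$, $\tau$ is obtained from the fan by replacing the branch into $\alpha_i$ by a branch that wraps once around the core and re-enters the switch from the opposite side, yielding the inequality $\sum_{j\ne i}x_j\ge y$. A short $\Chi_g$-computation on each component of the complement confirms goodness; deleting branches produces standard tracks for the lower-dimensional faces, so the collection $\T$ is closed.

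The main obstacle I expect is bijectivity: showing every essential measured lamination in $T_c$ is fully carried by a unique $\tau\in\T$ with a unique invariant weight vector. I would prove this by a normal-form argument on an arbitrary good carrying train track, splitting along non-full branches and pinching digons against $\delta$ until the result is one of the elementary standard tracks; the weights $(x_1,\ldots,x_c,y)$ then determine the weight vector uniquely via the switch equations. Once bijectivity is in hand, $\C(T_c)\cong\M_\T(T_c)$ is automatic, and the PL homeomorphism $\C(T_c)\cong\reals_+\times\reals^{c-1}$ follows by using $y\in[0,\infty)=\reals_+$ as the first factor and assembling PL coordinates on $(x_1,\ldots,x_c)$ at each fixed level $y$ to obtain the $\reals^{c-1}$ factor.
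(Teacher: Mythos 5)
Your overall plan (enumerate the essential arcs directly, compute their contributions to $(x_1,\ldots,x_c,y)$, and read off the image in $\Delta^c$) is a legitimate alternative to the paper's argument, which instead proceeds by induction on $c$: the paper identifies $\PC(T_c)\cap\{x_c=0\}$ with the cone on $\PC(T_{c-1})$ from the vertex of an arc $\gamma_c$ that is essential in $T_c$ but becomes inessential when $\alpha_c$ is deleted, and then observes that when all $x_i>0$ only radial arcs can occur, forcing $\sum_i x_i=y$. The problem is that your enumeration of the inner--inner arcs is wrong. In an annulus, every properly embedded arc with both endpoints on the inner boundary circle cuts off a disk disjoint from the outer boundary, so there is no separate isotopy class of arcs ``traversing the annulus once around the core.'' The correct invariant of an arc from $\alpha_i$ to $\alpha_j$ is the cyclic interval of arcs $\alpha_k$ enclosed by the disk it cuts off, and such an arc is essential precisely when that interval is non-empty (otherwise the cut-off region is a monogon or digon in the cusped picture, violating goodness). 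In particular there may be two essential classes for a given pair $\{i,j\}$ (one for each non-empty complementary interval), and in $T_2$ there is no essential arc joining $\alpha_1$ to $\alpha_2$ at all --- only the two arcs from an $\alpha_i$ to itself enclosing the other cusp.

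This matters because your key deduction --- that $\sum_i x_i>y$ forces some $x_i=0$ --- does not follow from what you wrote. By your own formula $x_i=b_i+2a_{ii}+\sum_{j\ne i}a_{ij}$, the absence of $\beta_i$ gives $b_i=0$ but not $x_i=0$ when some $a_{ij}>0$; indeed, in your model the single-arc system $\{\gamma_{12}\}$ in $T_2$ would have $x_1=x_2=1$ and $y=0$, contradicting the claimed description of $\PC(T_2)$. The correct mechanism is an innermost argument: choose an innermost inner--inner arc $\gamma$ of the system; the disk it cuts off contains some cusp $c_m$ (by essentiality), and $c_m$ meets no arc of the system (any such arc would have to cross $\gamma$ or lie strictly inside it, contradicting innermostness), so it is \emph{that} index which satisfies $x_m=0$. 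With the enumeration and this step repaired, your direct approach can be made to work; the remaining items (standard train tracks, closedness, bijectivity, and the PL homeomorphism, which the paper obtains by normally projecting $\bdry\C(T_c)$ to the hyperplane $\sum_i x_i=0$) are treated in the paper at roughly the level of detail of your sketch.
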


\noindent Note:  The topological type of $\PC(T_c)$ is known, see \cite{AH:SurfaceTriangulations}.  We need a description of the space in terms of the parameters $x_i$ and $y$.

\begin{proof}  The proof is by induction.   For $c=1$, there is only one possible weighted arc system up to scalar multiplication,
namely the single arc $\rho_1$ shown in Figure \ref{PairTCFig1}.   We show $\PC(T_2)$ and $\PC(T_3)$ with their triangulations in Figure \ref{PairTCFig2}.
In Figure \ref{PairTCFig1}, we show examples in $T_1$, $T_2$, and $T_3$ of curve systems corresponding to top-dimensional cells in $\PC(T_c)$, $c=1,2,3$, as well as elementary standard train tracks that fully carry them.  Other elementary standard train tracks are constructed similarly, and there is considerable flexibility in choosing these train tracks.   Figure $\ref{PairTCFig2}$ also shows points in $\PC (T_c)$ corresponding to some arcs in $T_c$.

\begin{figure}[H]
\centering
\scalebox{1}{\includegraphics{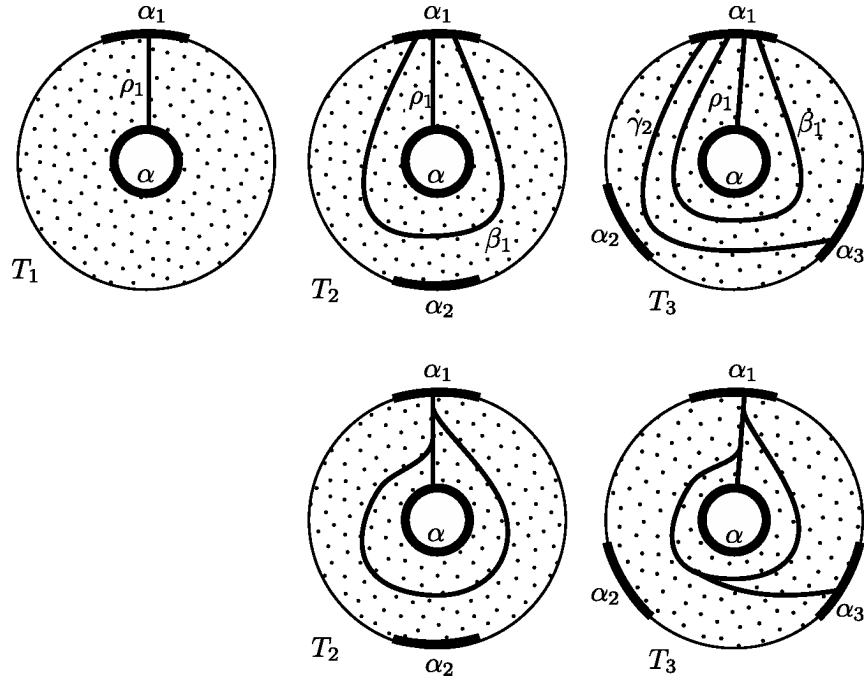}}
\caption{\small Examples of arc systems in $T_1$, $T_2$, and $T_3$ with standard elementary train tracks carrying them.}
\label{PairTCFig1}
\end{figure}

\begin{figure}[H]
\centering
\scalebox{1}{\includegraphics{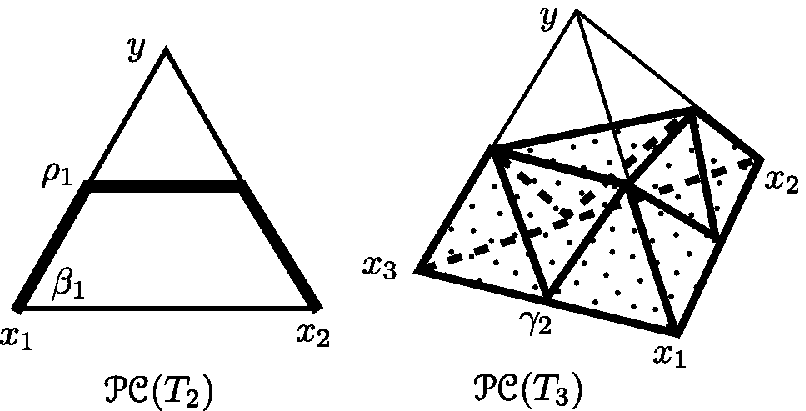}}
\caption{\small $\mathscr{PC}(T_2)$ and $\mathscr {PC}(T_3)$.}
\label{PairTCFig2}
\end{figure}

Now we prove the general statement using induction.  Assume $\PC(T_{c-1})$ is the union of subsets in the statement (with $c$ replaced by $c-1$).  
In the simplex $\Delta^c=[x_1,x_2,\ldots, x_c, y]$, putting $x_c=0$, we obtain the simplex  $\Delta^{c-1}=[x_1,x_2,\ldots, x_{c-1}, y]$ which contains $\PC(T_{c-1})$.  Every point
in $\PC(T_{c-1})$ contained in the face $x_c=0$ clearly also can represent a point of $\PC(T_c)$ whose induced weight $x_c$ is $0$.   There is an additional
arc $\gamma_c$ as shown in Figure \ref{PairTCFig3} which is essential in $T_c$ but inessential when we remove the arc $\alpha_c$ from $\bdry\bar S$, and 
which lies in the face $x_c=0$.  This arc can be added to any arc system in $T_c$ represented by a point in $\PC(T_{c-1})\subset [x_1,x_2,\ldots, x_{c-1}, y]$.
Hence coning $\PC(T_{c-1})\subset [x_1,x_2,\ldots, x_{c-1}, d]$ from the cone vertex $\gamma_c$ gives a subset of $\PC(T_c)\cap\{x_c=0\}$ which must in fact be all of $\PC(T_c)\cap\{x_c=0\}$ in the $c$-simplex.
The same reasoning gives the intersections of $\PC(T_c)$ with the other faces of the $c$-simplex, as required.

\begin{figure}[H]
\centering
\scalebox{1}{\includegraphics{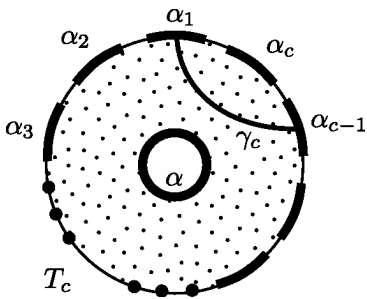}}
\caption{\small Remove $\alpha_c$ from $\alpha$ to get $T_{c-1}$ from $T_c$.}
\label{PairTCFig3}
\end{figure}

It remains to understand the portion of $\PC(T_{c})$ where all the $x_i$'s are non-zero.  But if all the $x_i$'s are non-zero, it is easy to verify that only ``radial" arc types $\rho_i$ can
appear.  This means arcs with one end in some $\alpha_i$ and the other end in the closed curve of $\alpha$, and in this case it is also easy to verify that 
$$\sum_{i=1}^c x_i=y.$$

For the last statement of the proposition, observe that the normal projection  in $\reals^{c+1}$ of $\bdry\C(T_c)$ to the hyperplane $$\displaystyle \sum_{i=1}^c x_i=0$$ \noindent is a piecewise linear homeomorphism (linear on rays through the origin) to the hyperplane.  The homeomorphism can be extended piecewise linearly to  $\C(T_c)$  preserving the positive $y$ coordinate axis.  \end{proof}

We must prove an analogue of Proposition \ref{CS1} for every elementary surface.  This has been done in many cases, see \cite{AH:SurfaceMLS}, and \cite{AH:SurfaceTriangulations}.
In particular, it has been done for the pair of pants, see \cite{AH:SurfaceMLS}.  We record the result below.   We use $P$ to denote the pair of pants $(P,\alpha)$ with $\alpha=\bdry P$.
For an essential weighted arc system in $P$ we let $y_1$, $y_2$ and $y_3$ denote the induced weights on the three boundary components of $P$.

\begin{proposition}\label{CS2} The space $\PC(P)=\PC(P,\bdry P)$ for the pair of pants $(P,\bdry P)$ is the standard $2$-simplex $\Delta^2$ in $\reals^3$, with vertices corresponding to the 
coordinate parameters $y_1,y_2,y_3$.  The curve complex is triangulated with four 2-simplices.  For each of the 2-simplices,  we can choose an elementary standard train track which carries
all arc systems represented by points in the simplex, see Figure \ref{PairPFig1}.  The train tracks can be chosen to form a closed bijective collection $\T$.
\hip\noindent
The unprojectivized space $\C(P)$ is piecewise linearly homeomorphic to $\reals_+^{3}$ and can be identified with $\M_\T(P)$.
\end{proposition}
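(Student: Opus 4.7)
The plan is to reduce the proposition to a finite combinatorial analysis: enumerate the isotopy classes of essential arcs in $P = (P, \bdry P)$, list the maximal disjoint arc systems, linearly parametrize each by the induced boundary weights $(y_1,y_2,y_3)$, and then build the standard train tracks by smoothing. First I would classify essential arcs: because $P$ is a planar surface with $\bdry P = \alpha$ and $\delta = \emptyset$ (so no spiraling is possible), every essential proper arc has both endpoints on $\bdry P$, and --- depending on whether the two endpoints share a component of $\bdry P$ --- falls into exactly one of six isotopy classes: three \emph{connectors} $\rho_{ij}$ running between $c_i$ and $c_j$ for $i\ne j$, and three \emph{separators} $\rho_i$ with both endpoints on $c_i$ that separate $c_j$ from $c_k$. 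A short disjointness argument --- using that $\rho_i$ separates $c_j$ from $c_k$, so cannot coexist with $\rho_{jk}$ or with any $\rho_l$ for $l\ne i$ --- shows that the maximal disjoint systems of three arcs are of exactly four combinatorial types: the tripod system $\mathrm{A}=\{\rho_{12},\rho_{13},\rho_{23}\}$ and three systems $\mathrm{B}_i=\{\rho_i,\rho_{ij},\rho_{ik}\}$, $i=1,2,3$.

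Next I would parametrize each top-dimensional cell of $\C(P)$ by the boundary weights. Assigning non-negative weights $a_{12},a_{13},a_{23}$ to the arcs of $\mathrm{A}$ gives $y_l = a_{lm} + a_{ln}$, a linear bijection from $\reals_+^3$ onto the central region $\{y_l + y_m \ge y_n \text{ for every permutation } (l,m,n) \text{ of } (1,2,3)\}$ of the first octant. Assigning weights $b_i,b_{ij},b_{ik}$ to the arcs of $\mathrm{B}_i$ gives $y_i = 2b_i + b_{ij} + b_{ik}$, $y_j = b_{ij}$, $y_k = b_{ik}$, a linear bijection from $\reals_+^3$ onto the corner region $\{y_i \ge y_j + y_k\}$. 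These four regions meet along their common faces (degenerations where one of $a_{ij}$ or $b_i$ vanishes) and together tile $\reals_+^3$; projectivizing by $y_1+y_2+y_3=1$ yields the subdivision of $\Delta^2$ into the four subtriangles claimed in the proposition.

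For each of the four maximal systems I would construct an elementary standard train track by smoothing the arcs into each other along small neighborhoods where they meet $\bdry P$, producing the tripod train track for $\mathrm{A}$ and three ``wrap'' train tracks for the $\mathrm{B}_i$ (where one branch of each $\mathrm{B}_i$ track wraps once around $c_i$ to accommodate $\rho_i$); each such $\tau$ can be arranged to meet every boundary component in a single pair of switches. Closing the four resulting tracks under passage to full sub-train-tracks gives the collection $\T$. Bijectivity then follows because within each cell the boundary weights $(y_1,y_2,y_3)$ recover the invariant weight vector uniquely through the linear formulas above (hence through the switch equations), and cells of different combinatorial type are distinguished by which of the inequalities $y_i \ge y_j + y_k$ is strict. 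This identifies $\C(P)$ with $\M_\T(P)$ and gives the piecewise linear homeomorphism $\C(P) \cong \reals_+^3$.

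The main obstacle is the bijectivity check for $\T$: one must verify that every essential measured lamination in $P$ is fully carried by exactly one standard train track in $\T$ and represented by a unique weight vector there. This amounts to showing that for each standard train track $\tau$ the switch equations together with the boundary weights $(y_1,y_2,y_3)$ admit a unique solution in $\V(\tau)$ (no hidden ``twist'' parameter remains, because $P$ contains no essential simple closed curve), and that two standard train tracks of different combinatorial types cannot simultaneously fully carry the same lamination. Once the arc classification of step one is in hand, both parts reduce to the linear algebra of the parametrizing maps in step two, so the heart of the argument is really the enumeration of arc types and their incidence relations.
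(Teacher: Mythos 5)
Your argument is correct, and in fact it supplies a proof where the paper gives none: Proposition \ref{CS2} is simply \emph{recorded} in the text, with the reader referred to \cite{AH:SurfaceMLS} for the pair-of-pants analysis, so the only thing to compare against is the cited source and Figure \ref{PairPFig1}. What you wrote is exactly that standard Dehn--Thurston argument: six isotopy classes of essential arcs, four maximal disjoint systems, the linear change of coordinates $a_{lm}=(y_l+y_m-y_n)/2$ on the central cell and $b_i=(y_i-y_j-y_k)/2$ on the corner cells, and the four train tracks obtained by smoothing. The one point I would ask you to make explicit is the step identifying $\C(P)$ with $\M_\T(P)$: you need to know that \emph{every} essential measured lamination in $(P,\bdry P)$ is a weighted arc system, i.e.\ that no lamination with non-compact or irrationally recurrent leaves occurs. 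This is where your observation that $P$ contains no essential simple closed curve and that $\delta=\emptyset$ (so no spiraling) does the real work --- any essential train track in $P$ then carries only arc systems --- and it is worth a sentence rather than being folded silently into the bijectivity check. With that addition your proposal is a complete and correct proof of the proposition.
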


\begin{figure}[H]
\centering
\scalebox{1}{\includegraphics{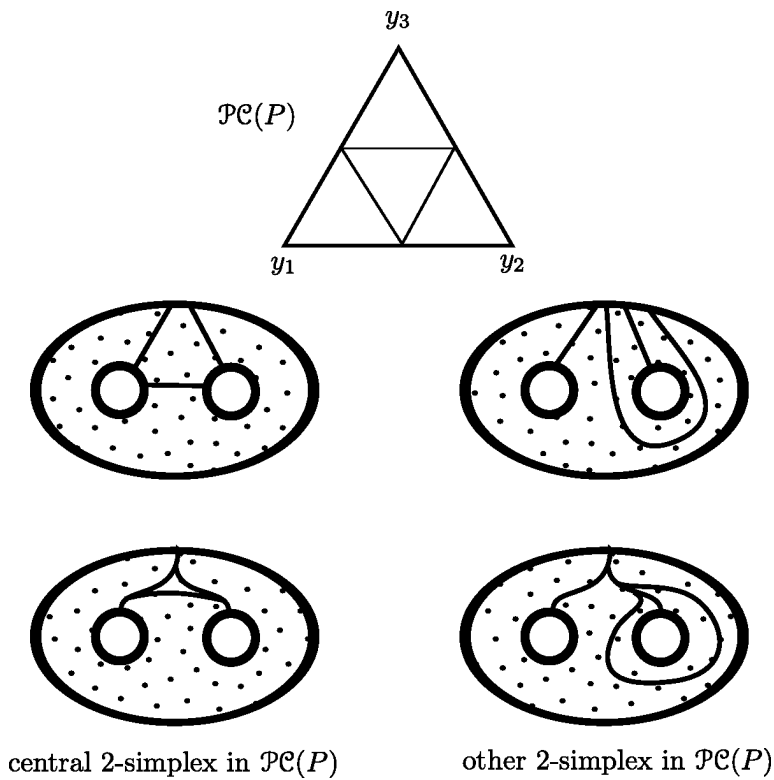}}
\caption{\small $\PC (P)$, typical arc systems, and standard elementary train tracks.}
\label{PairPFig1}
\end{figure}

The disk $D_c$ with $c>2$ cusps is an elementary surface which is self-sufficient.  We do not need it in the induction to paste together elementary surfaces.  For this reason, it is not
important to understand parameters for the curve complex $\C(D_c)$.    The following proposition is due to John Harer, \cite{JLH:VirtualCohomological}; an elementary proof can be found in \cite{AH:SurfaceTriangulations}.

\begin{proposition}\label{CS4} The projective space of curve systems $\PC(D_c)$ for the disk with $c$ cusps is homeomorphic to $S^{c-4}$.  
\end{proposition}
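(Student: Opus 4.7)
The plan is to identify $\PC(D_c)$ simplicially with the boundary of the dual of the Stasheff associahedron and then use that the boundary of a convex polytope of dimension $d$ is a sphere of dimension $d-1$.

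First I would classify essential arcs in $D_c$. Viewing $D_c$ in its cusped-surface model as a disk with $c$ boundary cusps, an essential arc runs between two distinct cusps and is essential precisely when those cusps are non-adjacent, since otherwise the arc cobounds a monogon or digon with $\delta$. Identifying $D_c$ with a convex $c$-gon whose vertices are the cusps, essential arcs correspond bijectively to diagonals, and a collection of essential arcs can be realized disjointly if and only if the corresponding diagonals are pairwise non-crossing. Hence $\PC(D_c)$ is exactly the simplicial complex whose $k$-simplices are sets of $k+1$ pairwise non-crossing diagonals of a convex $c$-gon, whose top-dimensional simplices are the triangulations (each having $c-3$ diagonals); in particular $\dim \PC(D_c) = c-4$.

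Next I would invoke the Stasheff associahedron $K_{c-1}$, a simple convex polytope of dimension $c-3$ whose face lattice is anti-isomorphic to the poset of sets of non-crossing diagonals in the $c$-gon ordered by inclusion: vertices of $K_{c-1}$ are triangulations and facets are single diagonals. Its polar dual $K_{c-1}^*$ is then a simplicial polytope of the same dimension whose boundary complex has a $k$-face for each $(k+1)$-set of pairwise non-crossing diagonals, matching the simplicial structure of $\PC(D_c)$ face-by-face. I would then conclude $\PC(D_c) \cong \partial K_{c-1}^* \cong S^{c-4}$.

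The main obstacle is appealing to or constructing the associahedron. If one prefers a self-contained proof, I would proceed instead by induction on $c$ with base case $c=4$, where the two crossing diagonals give $\PC(D_4)$ as two isolated points, i.e.\ $S^0$. For the inductive step, any diagonal $d$ splits the $c$-gon into an $n_1$-gon and an $n_2$-gon with $n_1 + n_2 = c+2$, and the link of the vertex $d$ in $\PC(D_c)$ is the simplicial join $\PC(D_{n_1}) * \PC(D_{n_2}) \cong S^{n_1-4} * S^{n_2-4} = S^{c-5}$ (with the convention $S^{-1} = \emptyset$ handling the case $n_i=3$); thus $\PC(D_c)$ is a closed $(c-4)$-dimensional pseudomanifold with spherical vertex links, and one upgrades this to a PL-sphere via a shelling of the complex of triangulations arising from the flip graph.
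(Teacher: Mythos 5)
Your argument is essentially correct, but note that the paper does not actually prove Proposition \ref{CS4}: it attributes the statement to Harer \cite{JLH:VirtualCohomological} and points to Hatcher \cite{AH:SurfaceTriangulations} for an elementary proof, so the comparison here is with those sources rather than with an in-text argument. Your main route --- identifying $\PC(D_c)$ with the complex of sets of pairwise non-crossing diagonals of a convex $c$-gon and recognizing that complex as the boundary of the (dual of the) associahedron --- is the standard one. Two points deserve more care. First, the classification of essential arcs should be run through the paper's own criterion: an arc with both endpoints in a single $\alpha_i$ necessarily cuts off a complementary piece meeting $\alpha$ in one arc ($\Chi_g=1/2$, a monogon), and an arc joining adjacent $\alpha_i,\alpha_{i+1}$ cuts off a digon ($\Chi_g=0$), so neither is carried by a good train track; arcs joining non-adjacent $\alpha_i,\alpha_j$ have both complementary pieces with at least three arcs of $\beta$, hence $\Chi_g<0$. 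You assert the right answer, but in this framework that computation is what makes ``essential arc $=$ diagonal'' legitimate. Second, the convexity of the associahedron is the load-bearing input and is itself a nontrivial theorem: Stasheff's $K_{c-1}$ is a priori only a CW-complex, and you need one of the explicit polytopal realizations (Lee, Haiman, Gelfand--Kapranov--Zelevinsky, Loday) to invoke ``boundary of a simplicial polytope is a PL sphere.'' With such a citation your first argument is complete.

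Your fallback induction is the weak link. Showing that $\PC(D_c)$ is a $(c-4)$-pseudomanifold all of whose vertex links are spheres does not by itself show it is a sphere (it only shows it is a closed PL manifold once you also control links of higher-dimensional faces), and the final appeal to ``a shelling arising from the flip graph'' is exactly the nontrivial step: an ordering of triangulations coming from the flip graph is not automatically a shelling order. If you want a self-contained proof, either carry out the shelling honestly (e.g.\ via a line shelling of a polytopal realization, which returns you to the first argument) or follow Hatcher's deformation/surgery argument, which avoids polytopes altogether. As written, the second half should be regarded as a plausibility sketch rather than a proof.
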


Finally, we need another elementary surface, called a {\it connector} in \cite{AH:SurfaceMLS}.  This is a pair $Q=(Q,\alpha)$ where $Q$ is an annulus and $\alpha=\bdry Q$.  The projective curve complex for such an annulus consists of a single point, so is not interesting, but connectors are used to incorporate ``twisting" between other surface elements, of the type $P$, $T_c$.  No connectors are needed adjacent to a surface element of the type $T_\emptyset$.  In the connector elementary surface, we examine $\PM_\T(Q)$, the projective measured lamination space of measured laminations carried by by a collection $\T$ of two prescribed standard train tracks, which are fair train tracks, not good train tracks.  The train tracks are not essential either, since they contain half Reef branched surfaces, which are needed to incorporate twisting.   These train tracks with their full sub-train-tracks form a closed collection $\T$ of train tracks.

\begin{proposition}\label{CS5} For the connector annulus we prescribe two train tracks $\tau_1$ and $\tau_2$, shown in Figure \ref{PairQFig} with parameters $(t_1, y_1)$ giving weights on $\tau_1$ and $(t_2, y_2)$ giving weights on $\tau_2$.   These two train tracks with their full sub-train-tracks form a closed collection $\T$ of train tracks.   The space $\PM_\T(Q)$  of projective measured laminations carried by these two train tracks is homeomorphic to $S^1$ .   The unprojectivized space $\M_\T(Q)$ of measured laminations carried by these two train tracks is piecewise linearly homeomorphic to $\reals^2$, with $y$ a piecewise linear function on this plane.
\end{proposition}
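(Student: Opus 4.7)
The plan is to compute each of the weight cones $\V(\tau_1)$ and $\V(\tau_2)$, determine the full sub-train-tracks along which they are identified in $\M_\T(Q)$, and then recognize the assembled space as $\reals^2$.

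First I would check that each $\V(\tau_i)$ is piecewise linearly homeomorphic to the closed quadrant $\reals_+^2$ with coordinates $(t_i,y_i)$. After collapsing any bivalent switches, $\tau_i$ reduces to one closed core-curve branch (carrying weight $y_i\ge 0$) and one transversal arc branch between the two boundary circles that accumulates onto the core via the half-Reeb (carrying weight $t_i\ge 0$). The half-Reeb replaces the would-be switch equation around the core by the spiral redistribution argument used in the earlier proposition about $\tau(x)$, so that every pair $(t_i,y_i)\in\reals_+^2$ is realized as an invariant weight vector.

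Next I would identify the non-trivial full sub-train-tracks of $\tau_1$ and $\tau_2$ that appear in both. Two arise: the core curve alone, sitting in $\V(\tau_i)$ as the ray $\{t_i=0\}$; and a single untwisted transversal arc, sitting in $\V(\tau_i)$ as the ray $\{y_i=0\}$, where the absence of core weight means the spiral has nothing to accumulate onto and the two $\tau_i$ both degenerate to the same arc lamination. These are the two rays along which $\V(\tau_1)$ and $\V(\tau_2)$ are identified in $\M_\T(Q)$.

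To finish I would recognize the quotient. The two identifications above assemble into a single piecewise linear homeomorphism $\bdry\V(\tau_1)\to\bdry\V(\tau_2)$ between L-shaped boundaries, sending ray to corresponding ray and origin to origin. Hence $\M_\T(Q)$ is the double of a closed PL quadrant along its boundary; since a closed quadrant is PL homeomorphic to a closed half-plane by straightening the corner, the double is piecewise linearly homeomorphic to $\reals^2$. Projectivizing, each $\PV(\tau_i)$ is a closed interval whose endpoints represent the two common sub-train-tracks, and gluing two such intervals at both of their endpoints yields $\PM_\T(Q)\cong S^1$. Finally, $y$ equals $y_i$ on the $\V(\tau_i)$ chart, agrees on the $\{t_i=0\}$ ray and vanishes on the $\{y_i=0\}$ ray, so it extends continuously and piecewise linearly to the whole $\reals^2$.

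The main obstacle will be verifying that the $\{y_i=0\}$ rays in $\V(\tau_1)$ and $\V(\tau_2)$ truly correspond to a single common full sub-train-track rather than two distinct ones in $(Q,\bdry Q)$. Because each $\tau_i$ has a spiraling direction built in, one must check that with zero weight on the core this direction becomes invisible and the two untwisted arc laminations actually coincide as measured laminations; without this identification the assembled space would only be a half-plane, not the plane $\reals^2$ that the proposition demands.
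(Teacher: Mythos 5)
Your argument is structurally the same as the paper's: each $\V(\tau_i)$ is a closed quadrant, the two quadrants are identified along both boundary rays (one ray the weighted core curve alone, the other the untwisted crossing arc alone, with the origins matching), and the double of a PL quadrant along its boundary is PL homeomorphic to $\reals^2$; projectivizing gives two intervals glued at both endpoints, hence $S^1$. The one thing you have reversed is the labelling of the coordinates. In the paper's proof, $y_i=0$ yields the weighted \emph{closed curve} of weight $t_i$, so $t_i$ is the core weight and $y_i$ is the weight on the transversal branch, i.e.\ exactly the weight induced on $\bdry Q$; you assign $y_i$ to the core and $t_i$ to the arc. This is harmless for the topology, but it makes your final claim that $y=y_i$ come out wrong under your own convention: the function $y$ that matters (it must match the $y$-parameter of the adjacent pair of pants or trim annulus in Section \ref{DecompositionPM}) is the transversal-arc weight, which vanishes on the core-curve ray and is nonzero on the arc ray. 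Swap the labels and the rest stands, including the point you flag at the end, which the paper simply asserts: an arc crossing $Q$ with zero core weight is carried by both $\tau_1$ and $\tau_2$ and represents the same isotopy class, so the two arc rays really are identified.
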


\begin{figure}[H]
\centering
\scalebox{1}{\includegraphics{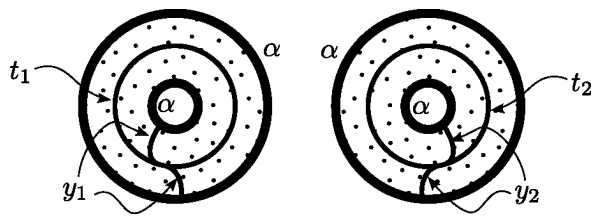}}
\caption{\small Elementary standard train tracks for the connector $Q$.}
\label{PairQFig}
\end{figure}

\begin{proof}  This is explained in \cite{AH:SurfaceMLS}, but we repeat the proof.  
The weights $(t_1,y_1)$ and $(t_2,y_2)$ for the two train tracks give first quadrants in two planes.
When $y_1=y_2=0$ the measured laminations obtained on each of the two train tracks are determined by
just one weigh $t_1$ or $t_2$, and in fact if also $t_1=t_2$, we obtain the same weighted closed curve.  Thus
we identify the positive $t_1$ axis with the positive $t_2$ axis.   Similarly, when $t_1=t_2=0$, and $y_1=y_2$, 
the weights on the two train tracks also represent the same weighted arc, so we identify the positive $y_i$-axes.  These 
identifications can be achieved using piecewise  linear homeomorphisms to a plane $\reals^2$ taking quadrants to half-planes.
If we let $y$ denote the weight induced by a curve system on either of the boundary components of $Q$, then $y$ is a piecewise linear function on the plane $\M_\T(Q)=\reals^2$, since either $y=y_1$ or $y=y_2$.  
\end{proof}

Among the elementary surfaces, we also have the trim annulus $T_\emptyset$.   This can be regarded as a $T_c$ with $c=0$, but it must be analyzed separately.  It is an annulus $(\bar S,\alpha)$ where $\alpha$ is one of the components of $\bdry\bar S$ and $\delta$ is the other boundary component.  There are no
compact essential arcs in this elementary surface, but we must allow half-infinite curves homeomorphic to $[0,\infty)\subset \reals$.  There are two such curves with boundary in the one component of $\alpha$, and with the end spiraling and limiting on $\delta$, see Figure \ref{PairTEFig}.  We record the obvious statement concerning this elementary surface in the following proposition.

\begin{figure}[H]
\centering
\scalebox{1}{\includegraphics{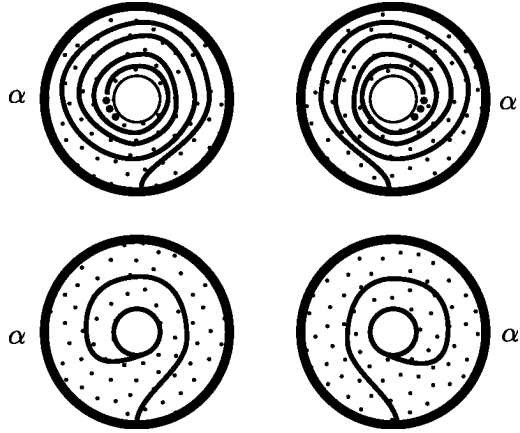}}
\caption{\small Curves and elementary standard train tracks for $T_\emptyset$.}
\label{PairTEFig}
\end{figure}

\begin{proposition}\label{CS3} The space $\PC(T_\emptyset)$ is $S^0$ with vertices corresponding to the coordinate $y_1$ and $y_2$, equal to the weight on the boundary, $y_1$ being the weight of a spiral in one sense and $y_2$ being the weight on a curve spiraling in the other sense.   Each of the two curves is carried by a  train track which includes $\delta=\cl(\bdry\bar S\setminus \alpha)$, see Figure \ref{PairTEFig}. 

\hip

\noindent The unprojectivized space $\C(T_\emptyset)$ is $\reals$.  The weight $y$ induced on the curve $\alpha$ is piecewise linear on $\reals$, namely it is the absolute value function.  \end{proposition}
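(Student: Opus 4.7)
My plan is to verify the proposition in three stages: first classify the essential ``curves" in $T_\emptyset$, then identify the train tracks that carry them, and finally glue the resulting weight cones into $\reals$ and check the formula for $y$.

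First I would enumerate all essential curves. In the annulus $\bar S$, the only closed curves up to isotopy are parallel to $\delta$ or to $\alpha$; the former is isotopic into $\delta$ and the latter into $\alpha$, so neither is essential by the definition in the preceding discussion. A compact arc would have both endpoints on $\alpha$ (since arcs of $\delta$ are forbidden), but any such arc in the annulus cobounds a half-disk with $\alpha$ and hence is inessential. This forces us to consider the half-infinite curves allowed in the paper's conventions: rays $[0,\infty)\hookrightarrow \bar S$ with endpoint in $\alpha$ whose tail spirals onto $\delta$. Using the orientation of $\bar S$ to orient $\delta$, there are exactly two isotopy classes of such spirals, distinguished by the sense of spiraling; call them $\gamma_1$ and $\gamma_2$.

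Next I would construct, for each $\gamma_i$, the train track $\tau_i = \gamma_i \cup \delta$ with a single switch where $\gamma_i$ meets $\delta$, exactly as in Figure \ref{PairSpiralTrainFig}. This $\tau_i$ is fair (its complementary pair is a monogon-like piece with $\Chi_g = 0$) and contains no $2$-dimensional or half Reeb sub-train-track, since $\alpha$ is distinct from $\delta$ and no closed curve in $\tau_i$ bounds an annulus containing a component of $\alpha$. Hence $\tau_i$ is an essential train track. By the extension procedure of the preceding proposition, any non-negative weight on the single non-$\delta$ segment of $\tau_i$ determines a measured lamination spiraling onto $\delta$, so $\V(\tau_i) = \reals_+$ and each nonzero weight gives a distinct essential measured lamination. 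Thus $\PC(T_\emptyset)$ consists of exactly the two points $[\gamma_1]$ and $[\gamma_2]$, which is $S^0$.

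Finally I would assemble $\C(T_\emptyset)$ from the two rays $\V(\tau_1), \V(\tau_2) \cong \reals_+$. The only shared sub-train-track of $\tau_1$ and $\tau_2$ is $\delta$ itself, corresponding to the zero weight vector (the trivial lamination in $\hat S$), so the two rays are glued at the origin, yielding $\reals_+\cup_0 \reals_+ \cong \reals$. Under this identification, write a point as $x\in\reals$ with $x\ge 0$ on the $\gamma_1$ side and $x\le 0$ on the $\gamma_2$ side. A transversal to $\alpha$ meets the spiral $\gamma_i$ with weight $|x|$, so the induced weight on $\alpha$ is $y=|x|$, the absolute value function. The main (and only) obstacle is the first step: being careful that the definitions of essential lamination and of ``curve" force the enumeration to contain precisely the two spirals and nothing else; once this is settled, the rest is a direct read-off from the two chosen train tracks.
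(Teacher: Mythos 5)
Your argument is correct and takes essentially the same route as the paper, which records this proposition without proof as ``obvious'' after a paragraph containing exactly your enumeration: no compact essential arcs, precisely two half-infinite spirals distinguished by the sense of spiraling, each carried by a train track containing $\delta$, with the two weight rays glued at the origin to give $\reals$ and $y=|x|$. One minor correction: the complementary region of the spiral train track is a digon (as the paper notes when discussing Figure \ref{PairSpiralTrainFig}), not a ``monogon-like piece,'' though your computation $\Chi_g=0$ and the conclusion that the track is fair but not good are right.
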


Recall that measured laminations carried by the two train tracks in the above statement are represented by weights on segments of $\tau_i$ {\em excluding $\delta$}.

\section {$\PM_\D(\bar S,\alpha)$.} \label{DecompositionPM} 

In this section we suppose we are given a surface pair $(\bar S,\alpha)$ with a decomposition $\D$ of $(\bar S,\alpha)$ (as in the introduction) into 
elementary surface pairs of the following types described in the previous section:  

(1) Pairs of pants $(P,\bdry P)$ in which all of $\bdry P$ belongs to $\alpha$.

(2)  Connecter annuli $(Q,\bdry Q)$,.

(3)  Trim annuli $T_c$ of the form $(\bar S,\alpha)$ where $\bar S$ is an annulus and $\alpha$ consists of one component of $\bdry\bar S$ and $c$ arcs in the other boundary component.

(4)  Trim annuli $T_\emptyset$ of the form $(\bar S,\alpha)$ where $\bar S$ is an annulus and $\alpha$ consists of one component of $\bdry\bar S$.

In Section \ref{Complexes}, we described a collection of elementary standard train tracks in the elementary surface pairs arising from the decomposition $\D$.  By glueing elementary standard train tracks when reassembling $(\bar S,\alpha)$ from the elementary surface pairs of the decomposition, we obtain a collection $\T$ of standard train tracks relative to the decomposition.

\begin{defn}  A {\it standard train track} with respect to the decomposition $\D$  in $(\bar S,\alpha)$ is a train track $(\tau,\bdry \tau)$ such that for every elementary surface pair $(\bar F,\beta)$ of the decomposition, viewed as a subsurface of $(\bar S,\alpha)$, $\tau\cap (\bar F,\beta)$ is an standard elementary train track properly embedded in $(\bar F,\beta)$.
\end{defn}

We can easily verify the following.

\begin{lemma} The standard train tracks $\T$ associated to a decomposition $\D$ of $(\bar S,\alpha)$ form a closed collection of train tracks.
\end{lemma}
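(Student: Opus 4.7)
The plan is to reduce closure of $\T$ to the closure property already established for each elementary surface pair in Propositions \ref{CS1}, \ref{CS2}, \ref{CS3}, and \ref{CS5}. In each of those propositions the prescribed collection of elementary standard train tracks is declared closed (and in fact bijective), so the work in this lemma is to transport that pieceswise statement across the decomposition $\D$.

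First I would fix a standard train track $\tau \in \T$ and a full sub-train-track $\tau' \subset \tau$. For each elementary surface pair $(\bar F,\beta)$ of $\D$, the intersection $\tau \cap (\bar F,\beta)$ is by the definition of standard train track one of the prescribed elementary standard train tracks for $(\bar F,\beta)$, and $\tau' \cap (\bar F,\beta)$ is a sub-train-track of it. Smoothness at switches and proper embedding relative to $\beta$ are inherited from the corresponding properties of $\tau'$ inside $(\bar S,\alpha)$, together with the fact that $\tau$ meets the gluing loci of $\D$ in finitely many transverse arcs to which the construction of $\tau'$ responds simply by keeping or discarding whole branches.

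The key step is to check that $\tau' \cap (\bar F,\beta)$ is itself full in $(\bar F,\beta)$. Since $\tau'$ is full, there is a lamination $L$ fully carried by $\tau'$, i.e., $L$ meets every interval fiber of $N(\tau')$. The fibered neighborhood $N(\tau' \cap (\bar F,\beta))$ is just $N(\tau') \cap (\bar F,\beta)$, whose interval fibers form a subset of those of $N(\tau')$, so the restricted lamination $L \cap (\bar F,\beta)$ meets every fiber of $N(\tau' \cap (\bar F,\beta))$ and is fully carried there. The closure of the elementary collection in $(\bar F,\beta)$ then places $\tau' \cap (\bar F,\beta)$ in that collection. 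Since this reasoning applies in each piece of $\D$, $\tau'$ is a standard train track with respect to $\D$, so $\tau' \in \T$.

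The only point to watch is that ``full'' is preserved by the restriction to an elementary piece; but since $\T$ was constructed in the first place by gluing elementary standard train tracks along the gluing curves of $\D$, the fibered neighborhoods match on those curves and the restriction of a fibered neighborhood of $\tau'$ is a fibered neighborhood of $\tau' \cap (\bar F,\beta)$, with no hidden extra fibers. This makes the reduction to the elementary-case closure immediate and is the only nontrivial thing one needs to verify.
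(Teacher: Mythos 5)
Your argument is correct and is precisely the verification the paper leaves to the reader (the lemma is introduced only with ``We can easily verify the following,'' with no written proof): a full sub-train-track $\tau'$ of a standard $\tau$ restricts in each elementary piece $(\bar F,\beta)$ to a full sub-train-track of an elementary standard train track, and the elementary collections of Propositions \ref{CS1}, \ref{CS2}, \ref{CS3}, and \ref{CS5} were constructed to be closed, so $\tau'$ is again standard. Your observation that fullness passes to the restriction because the fibers of $N(\tau')\cap(\bar F,\beta)$ are among those of $N(\tau')$ is exactly the one nontrivial point, and you handle it correctly.
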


Eventually, we will also have to show that $\T$ is a bijective collection of train tracks.

\begin{defn}  We define a space $\PM_\D(\bar S,\alpha)$ to be the same as $\PM_\T(\bar S,\alpha)$, where $\T$ is the closed collection of standard train tracks with respect to $\D$ in $(\bar S,\alpha)$.
\end{defn}

The first step towards proving Theorem \ref{MainThm} is to prove $\PM_\D(\bar S,\alpha)$ has the topological type described in the theorem.  It then remains to show that the space does not depend on
the decomposition.

\begin{proposition}  Suppose $(\bar S,\alpha)$ is a connected surface pair satisfying $\Chi_g=\Chi_g(\bar S,\alpha)<0$, with topological Euler characteristic $\Chi(\bar S)=\Chi$.  Suppose $\alpha$ contains $b$ closed curves and $c$ arcs.
Then $\M_\D(\bar S)$ is homeomorphic (via a homeomorphism linear on projective equivalence classes) to $\reals^{-3\Chi-b+c}\times \reals_+^b=\reals^{-3\Chi_g-c/2-b}\times \reals_+^b$, where $\reals_+$ denotes $[0,\infty)\subset \reals$.  Thus $\PM_\D(\bar S,\alpha)$ is homeomorphic to the join of a sphere $S^{-3\Chi-b+c-1}=S^{-3\Chi_g-c/2-b-1}$ and a simplex $\Delta^{b-1}$.
\end{proposition}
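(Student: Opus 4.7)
My plan is to prove this by induction on $q+e$, where $q$ is the number of connector annuli and $e$ is the number of trim annuli $T_\emptyset$ in $\D$. In the base case $q+e=0$, no gluings are needed, so $\D$ consists of a single elementary piece, and the condition $\Chi_g(\bar S,\alpha)<0$ forces it to be a pair of pants (with $\Chi=-1$, $b=3$, $c=0$) or a trim annulus $T_c$ with $c\ge 1$ (with $\Chi=0$, $b=1$). Propositions \ref{CS2} and \ref{CS1} give $\M_\D\cong\reals_+^3$ and $\M_\D\cong\reals_+\times\reals^{c-1}$ respectively, and a direct computation shows both agree with the claimed formula $\reals^{-3\Chi-b+c}\times\reals_+^b$.

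For the inductive step with $q>0$, I would choose a connector $Q\in\D$ glued on its two sides to pieces $A$ and $B$ (which may coincide when the gluing is nonseparating). Let $(\bar S',\alpha')$ be obtained from $(\bar S,\alpha)$ by deleting $\intr(Q)$ and adjoining the two newly-exposed closed curves to $\alpha'$; the resulting decomposition $\D'$ has one fewer connector, with invariants $\Chi'=\Chi$, $c'=c$, $b'=b+2$. By the inductive hypothesis (applied to each component of $(\bar S',\alpha')$), $\M_{\D'}\cong\reals^{d-2}\times\reals_+^{b+2}$ where $d=-3\Chi-b+c$. By Proposition \ref{CS5}, $\M_\T(Q)\cong\reals^2$ with a piecewise-linear weight function $y_Q:\reals^2\to\reals_+$; the gluing constraints $y_A=y_Q=y_B$ identify the two $\reals_+$ factors for the exposed curves with $y_Q$. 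Since $(t,y^*)\in\reals^2$ freely determines $y_A$ and $y_B$ via $y_Q$, this fibered product replaces those two $\reals_+$ factors with a single copy of $\reals^2$, yielding $\M_\D\cong\reals^{d-2}\times\reals_+^b\times\reals^2=\reals^d\times\reals_+^b$.

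For the inductive step with $q=0$ but $e>0$, I would choose a $T_\emptyset\in\D$ glued directly (as noted in the paper, no connector is needed adjacent to $T_\emptyset$) to a piece $A$ along its closed curve. Let $(\bar S',\alpha')$ be obtained by removing $T_\emptyset$ and adjoining the exposed closed curve to $\alpha'$; the resulting $\D'$ has one fewer $T_\emptyset$, with $\Chi'=\Chi$, $c'=c$, $b'=b+1$. By induction, $\M_{\D'}\cong\reals^{d-1}\times\reals_+^{b+1}$. By Proposition \ref{CS3}, $\M_\T(T_\emptyset)\cong\reals$ with parameter $x\in\reals$ and boundary weight $|x|$, so the gluing constraint $y_A=|x|$ identifies the $\reals_+$ factor for $y_A$ with the image of $|\cdot|:\reals\to\reals_+$, replacing it with $\reals$. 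Hence $\M_\D\cong\reals^{d-1}\times\reals_+^b\times\reals=\reals^d\times\reals_+^b$.

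The main obstacle will be handling nonseparating reductions, where removing $Q$ or $T_\emptyset$ keeps $(\bar S',\alpha')$ connected but places the new boundary curve(s) on a single piece, and verifying that the fibered-product identifications are PL-homeomorphisms linear on projective equivalence classes. Particular care is needed in the $T_\emptyset$ case, since the sign of $x$ encodes the spiraling direction of the lamination at the $\delta$ boundary and must be reconciled against any twist parameters of neighboring connectors attached on the same side. A related subtlety is confirming that every connected component of a disconnected $(\bar S',\alpha')$ has $\Chi_g<0$ so the inductive hypothesis applies; because connectors are never adjacent to $T_\emptyset$'s or to each other, the components arising from a connector removal are always subsurfaces whose $\Chi_g$ equals $\Chi_g$ of the original elementary piece on that side, which is strictly negative.
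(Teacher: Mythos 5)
Your proposal is correct and rests on exactly the same ingredients as the paper's proof --- the elementary computations of Propositions \ref{CS1}, \ref{CS2}, \ref{CS5}, \ref{CS3} and the observation that the $\reals_+$ boundary-weight parameter of a pair of pants or $T_c$ is determined by the parameters of the attached connector or $T_\emptyset$ --- but you organize the bookkeeping differently. The paper does it in one shot: it takes the product of the elementary spaces over all pieces of $\D$, performs all the identifications simultaneously to get $\reals_+^b\times\reals^\ell\times\reals^{c-r}\times\reals^{2m}$, and then converts the exponent to $-3\Chi-b+c$ via the single counting identity $2m+\ell=-3\Chi+r-b$. Your induction on the number of connectors and $T_\emptyset$'s replaces that global count with the local steps $b\mapsto b+2$ and $b\mapsto b+1$, which is cleaner arithmetic but costs you two things the one-shot argument gets for free: you must strengthen the inductive hypothesis to record that the $\reals_+^b$ factor consists precisely of the boundary weights $y_i$ on the closed curves of $\alpha$ (otherwise ``the two $\reals_+$ factors for the exposed curves'' has no meaning), and you must check $\Chi_g<0$ for each component of $(\bar S',\alpha')$ --- your justification there is the right idea but misstated: the component's $\Chi_g$ is the \emph{sum} of the $\Chi_g$'s of its elementary pieces, all of which are $\le 0$, and it contains at least one $P$ or $T_c$ (the piece formerly adjacent to the removed connector or $T_\emptyset$), which is strictly negative. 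Two small repairs: your base case should also admit the $n$-gon $(D,\alpha)$ with $c\ge 4$ arcs, where Proposition \ref{CS4} gives $\reals^{c-3}$ in agreement with the formula (the paper dispatches this in its closing sentence); and the worry about reconciling the sign of the $T_\emptyset$ parameter with neighboring twist parameters is vacuous, since the paper stipulates that no connector is adjacent to a $T_\emptyset$, so the only gluing constraint there is $y_A=|x|$.
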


\begin{proof}  Suppose the decomposition $\D$ of $(\bar S,\alpha)$ gives the following elementary surfaces:

(1) $k$ pairs of pants $P$.

(2) $\ell$ trim annuli $T_\emptyset$.

(3) Trim annuli $T_{c_1}, T_{c_2}, \ldots, T_{c_r}$ with $c_i$ arcs of $\alpha$ in $T_{c_i}$.  

(4) $m$ connectors $Q$.

Let $c=c_1+c_2+\cdots+c_r$.  We can easily calculate the projective lamination space of the disjoint union of the elementary surface pairs in the decomposition as a product of the spaces for elementary surface pairs.

For the $k$ pairs of pants we have $\reals_+^{3k}$.  

For the $\ell$ copies of $T_\emptyset$ we have $\reals^\ell$.  

For the disjoint union of $T_{c_1}, T_{c_2},\ldots, T_{c_r}$ we have

$(\reals_+\times \reals^{c_1-1})\times(\reals_+\times \reals^{c_2-1})\times\ldots\times(\reals_+\times \reals^{c_1-1})$.

For the $m$ copies of the connector $Q$ we have $\reals^{2m}$.

Next we note that whenever a closed curve of $\bdry T_{c_i}$ or a curve of $\bdry P$ is identified with a closed curve in the boundary of a connector or a $T_\emptyset$,
the $y\in\reals_+$ parameter for that boundary of the pair of pants or trim annulus $T_{c_i}$ is determined by the parameters for the attached connector or $T_\emptyset$, see Section \ref{Complexes}.  Thus
after identifications, we obtain a space homeomorphic to:

$$\reals_+^b\times \reals^\ell\times \reals^{(c_1+c_2+\cdots+c_r-r)}\times \reals^{2m}$$

\noindent  To make a connection with the Euler characteristic, notice that the total number of boundary curves of the pairs of pants in the decomposition is $3k$.   Since $\Chi=\Chi(S)=-k$, we can say that the total number of boundary curves of the pairs
of pants is $-3\Chi=3k$.   We know that $2m+\ell$ boundaries of connectors and $T_\emptyset$'s are attached to $3k+r-b$ boundaries of pairs of pants and $T_c$'s, so we have $2m+\ell=3k+r-b=-3\Chi+r-b$.    The number of $\reals$ factors in our space is $\ell+c-r+2m$, or replacing $2m+\ell$ by $-3\Chi+r-b$, the number is $-3\Chi+r-b+c-r=-3\Chi-b+c$, which gives the result in the statement.

To finish the proof, observe that our formula gives the correct answer for $n$-gons, $n\ge 4$, and also observe that our calculations apply to disconnected surface pairs.
\end{proof}

\section {Intersection numbers.} \label{IntersectionNumbers}

We will follow the usual strategy for defining reasonably a measured lamination space $\M(\bar S,\alpha)$ which for any given decomposition $\D$ of the surface pair $(\bar S,\alpha)$ is homeomorphic of $\M_\D(\bar S,\alpha)$.   There are two problems to address:

Problem 1:  Show that every essential measured lamination in $(\bar S,\alpha)$ is represented by a point of $\M_\D(\bar S,\alpha)$ and that different points of $\M_\D(\bar S,\alpha)$ do not represent the same measured lamination, up to isotopy.  If this were not true, $\M_\D(\bar S,\alpha)$ would not be a reasonable candidate for the measured lamination space.

Problem 2: Show that the topology of $\M_\D(\bar S,\alpha)$ does not depend on the choice of decomposition $\D$,

To solve Problem 2, we obtain a topology independent of choices by mapping each measured lamination to a point in $\reals^\H$, where $\H$ represents a set of homotopy classes of curves in $(\bar S,\alpha)$, including paths beginning and ending in $\alpha$ as well as closed curves in $S$.  More specifically, $\H$ will include curves of the following kinds:
\begin{enumerate}[(i)]\itemsep-3pt
\item Closed curves $\gamma$ not homotopic into $\alpha$ or into $\delta$ and not null homotopic.
\item Paths $\gamma$ beginning and ending in $\alpha$ and not homotopic into $\alpha$ or $\delta$.
\item Closed curves of $\delta$ with orientations.  If $\gamma$ is a closed curve in $\delta$, $\gamma_+$ is $\gamma$ with orientation induced from a given orientation of $\bar S$, while $\gamma_-$ is the same curve with opposite orientation.  The curves $\gamma_+$ and $\gamma_-$ are called {\it oriented variants of $\gamma$}.   $\H$ includes the two oriented variants for each closed component of $\delta$, but it does not include the unoriented curve.
\end{enumerate}

The elements of $\H$ are simply truncated geodesics in $S$, except closed geodesics in $\bdry S$ are given orientations.  Not all truncated geodesics are included;  geodesics spiraling to closed curves of $\delta$ are omitted.

For $\theta\in \H$ as above, we define an intersection or length function $i_\theta$ as follows:

\begin{enumerate}[(i)]\itemsep-3pt
\item If $\theta=\gamma$ is a closed unoriented curve not homotopic into $\delta$, this is the usual intersection in the standard theory of measured laminations, $i_\theta((L,\mu))=\inf\{\mu(\gamma)\}$ where the infimum is taken over curves homotopic to $\gamma$ and transverse to $L$.

\item For paths $\gamma$ beginning and ending in $\alpha$ the same formula applies, but the infimum is taken over curves $(\gamma,\bdry\gamma)\to (\bar S,\alpha)$ in the homotopy class of pairs and transverse to $L$.

\item For $\theta$ a closed path $\gamma$ in $\delta$ with a chosen orientation $\theta=\gamma_-$ or $\theta=\gamma_+$ and an essential measured lamination $(L,\mu)$, the intersection is the measure $w$ of leaves spiraling to $\delta$ if the sense of spiraling agrees with the orientation on $\gamma$ as shown in Figure \ref{PairSign}, with the lamination replaced by a train track, and is 0 otherwise.  
\end{enumerate}

\begin{figure}[H]
\centering
\scalebox{1}{\includegraphics{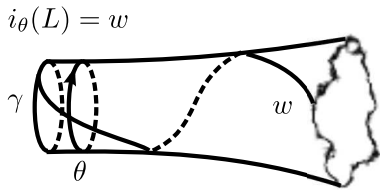}}
\caption{\small Intersections with oriented variants $\theta$ of a closed curve $\gamma$ in $\delta$.}
\label{PairSign}
\end{figure}

Intersections of closed oriented paths $\gamma$ in $\delta$ with an essential measured lamination $(L,\mu)$ can be interpreted as cohomology classes associated to a measured lamination with leaves spiraling to $\delta$. Suppose $(L,\mu)$ is carried by an extended train track $\tau\supset \delta$.  Then the measure $\mu$ gives an invariant weight vector on $\tau\setminus \delta$, and we can transversely orient switches of $\tau$ on $\delta$ as shown in Figure \ref{PairClassFig}, according to the sense of branching, and assign the weight from the invariant weight vector on $\tau$.   In this way $\bdry \tau$ with assigned weights represents a cohomology class in $H^1(\delta,\reals)$.     (The figure shows the intersection of the train track $\tau$ with a trim annulus $T_\emptyset$.)   In fact, intersections with elements of $\H$ determine the algebraic intersection of oriented closed curves $\gamma_+$ in  $\delta$ with $(L,\mu)$ as $i_{\gamma_+}(L)-i_{\gamma_-}(L)$, which gives the cohomology class.

\begin{figure}[H]
\centering
\scalebox{1}{\includegraphics{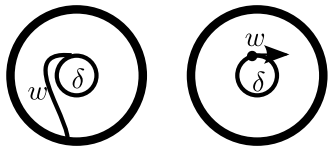}}
\caption{\small The cohomology class on $\delta$ induced by a measured lamination.}
\label{PairClassFig}
\end{figure}

We define a set-theoretic map $\I=(i_\gamma):\M_\D(\bar S,\alpha)\to \reals^\H$ with coordinate functions $i_\theta$, $\theta\in \H$.   We then assign the subspace topology to the image in $\reals^\H$ to obtain $\M(\bar S,\alpha)$.  Projectivizing in $\reals^\H$, we obtain the quotient which we call $\PM(\bar S,\alpha)$.   Our goal, of course, is to show that the map $\I=(i_\gamma):\PM_\D(\bar S,\alpha)\to \PM(\bar S,\alpha)$ is a homeomorphism.

We will begin by addressing Problem 1 after presenting a few necessary definitions and a lemma.

We quote the following lemma without proof.

\begin{lemma} Suppose $(\bar S,\alpha)$ is a surface pair and $L$ is an essential measured lamination in the pair.
Suppose $C$ is a curve system in $(\bar S,\alpha)$ consisting of disjointly embedded closed curves with no two of the closed curves isotopic and no curve isotopic to a curve of $\delta$.  Then $L$
can be isotoped to achieve the minimum intersection with $C$.  This means $L$ can be isotoped to achieve the intersection $\displaystyle \sum_\gamma i_\gamma(L)$ over connected curves $\gamma$ in $C$.  For a single curve $\gamma$ not homotopic into $\delta$ (not necessarily embedded) the infimum in the definition of $i_\gamma(L,\mu)$ is achieved by a representative of the homotopy class of $\gamma$.  
\end{lemma}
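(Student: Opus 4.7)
The plan is to realize both $L$ and $C$ simultaneously as geodesics in a suitable complete hyperbolic structure. Since $\Chi_g(\bar S,\alpha)<0$, I can equip the interior of $\bar S$ (with closed curves of $\delta$ removed so they become cusps) with a complete hyperbolic metric of finite area for which arcs of $\delta$ become ideal sides and $\alpha$ becomes the (totally geodesic or ideal) boundary. In this metric, $L$ can be straightened to a geodesic measured lamination $L^*$, with leaves that spiral to a closed curve $\kappa\subset\delta$ in the topological picture becoming geodesic leaves heading into the cusp corresponding to $\kappa$. The transverse measure $\mu$ pushes forward to $L^*$ via the straightening isotopy.

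Next, each closed curve of $C$ is replaced by its unique closed geodesic representative. Since no two curves of $C$ are isotopic and none is isotopic into $\delta$, the resulting closed geodesics are pairwise distinct, and by the standard fact that disjointly embedded non-isotopic simple closed curves have disjointly embedded geodesic representatives, they assemble into a disjoint geodesic system $C^*$ lying in the thick part of $\bar S$, away from the cusps. Because closed geodesics minimize intersection with a geodesic lamination within their homotopy classes, and since $L^*$ and $C^*$ are already simultaneously geodesic and transverse, the total transverse measure $\mu(C^*)$ equals $\sum_{\gamma\in C} i_\gamma(L)$. The isotopy from $C^*$ back to $C$ then realizes the minimum intersection sum in the original configuration. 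The second statement follows from the same argument applied to the geodesic representative of a single (possibly non-simple) curve $\gamma$.

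The main technical obstacle is a rigorous treatment of the spiraling behavior of $L$ at closed components of $\delta$, and verification that the intersection count $\mu(C^*)=\sum_\gamma i_\gamma(L)$ is unaffected by activity arbitrarily close to the cusps. The cleanest way to handle this is to truncate small horocyclic neighborhoods of each cusp: every closed geodesic of $C^*$ lies entirely in the truncated compact subsurface (since no curve of $C^*$ is homotopic into $\delta$), and each such geodesic meets $L^*$ in only finitely many points there, so the spiraling tails of leaves of $L^*$ contribute nothing to $\mu(C^*)$. Alternatively, a purely combinatorial route bypasses hyperbolic geometry: represent $L$ on a fair train track $\tau\supset\delta$ as in the Proposition defining $\tau(\bar x)$, isotope $C$ transverse to $\tau$, and eliminate all bigons bounded by arcs of $C$ and of $\tau$ by a standard normal-form argument. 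The resulting intersection measure is then $\sum_{\gamma\in C} i_\gamma(L)$ by summing the invariant weights on branches of $\tau$ crossed by each $\gamma$, with the same argument handling the single-curve statement.
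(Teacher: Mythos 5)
Your proposal is correct and takes essentially the same approach the paper indicates: the paper states this lemma without proof, remarking only that it can be proved ``using hyperbolic geometry, by representing laminations and curve systems by geodesic laminations and geodesic curve systems,'' or alternatively by the direct combinatorial argument of Hatcher, and your sketch elaborates exactly these two routes (with the sensible added care about truncating cusp neighborhoods to control the spiraling leaves).
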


The lemma can be proved using hyperbolic geometry, by representing laminations and curve systems by geodesic laminations and geodesic curve systems.  Alternatively, the proof can be done directly as in \cite{AH:SurfaceMLS}, where it is shown (without using any geometry) that the infimum in the definition of $i_\gamma(L)$ can be realized.

\begin{proposition}\label{InjectiveProp}  If $\Chi_g(\bar S,\alpha)<0$ and $\D$ is a decomposition of $(\bar S,\alpha)$ then every essential measured lamination is realized as a point in $\M_\D(\bar S,\alpha)$.   The map $\M_\D(\bar S,\alpha)\to \reals^\H$ is an injection.
\end{proposition}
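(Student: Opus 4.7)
The statement splits into two claims: realization (every essential measured lamination is carried by some standard train track in the collection $\T$) and injectivity of the intersection map. I plan to address them separately.

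For realization, I would start with an essential measured lamination $(L,\mu)$ in $(\bar S,\alpha)$ and invoke the preceding lemma to isotope $L$ to minimal position with respect to the finite system of closed glueing curves of $\D$. After this isotopy, the restriction of $L$ to each elementary surface pair $(\bar F,\beta)$ is itself an essential weighted arc/curve system or measured lamination of the type analysed in Section \ref{Complexes}. Propositions \ref{CS1}, \ref{CS2}, \ref{CS3} and \ref{CS5} provide, for each type of elementary surface pair, a closed bijective collection of elementary standard train tracks that fully carry all such objects, so $L\cap \bar F$ is fully carried by some elementary standard $\tau_F$. Since the weights induced on a glueing curve by adjacent pieces agree (both come from $\mu$), the $\tau_F$ assemble into a global standard train track $\tau\in\T$ carrying $L$ with a well-defined invariant weight vector, producing the required point of $\M_\D(\bar S,\alpha)$.

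For injectivity, suppose $L_1,L_2\in \M_\D(\bar S,\alpha)$ have the same image in $\reals^\H$. First, each closed glueing curve $\gamma$ of $\D$ belongs to $\H$, so its weight is pinned down by $i_\gamma$. Within each elementary pair which is a pair of pants or a $T_c$, Propositions \ref{CS1} and \ref{CS2} explicitly parametrise the weighted arc system by the boundary weights $y_i$ together with the arc weights $x_i$; the latter are precisely intersection numbers with the arcs pictured in those propositions, each an essential arc with endpoints in $\alpha$ and hence an element of $\H$. For a trim annulus $T_\emptyset$, the two oriented variants $\gamma_+,\gamma_-$ of the closed $\delta$-curve lie in $\H$, and by the sign convention of Figure \ref{PairSign} the difference $i_{\gamma_+}(L)-i_{\gamma_-}(L)$ recovers both the sense and the weight of the spiralling leaf.

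The main obstacle is the connector annulus, since $Q$ contains no arcs in $\H$ at all: both boundary components of $Q$ are glueing curves and $\H$ contains no arcs of $\delta$. The twist must therefore be detected indirectly, by arcs of $\H$ that traverse $Q$ and continue into the adjacent elementary pieces. With the boundary weight $y$ held fixed, varying the twist on $\tau_1$ or $\tau_2$ of Figure \ref{PairQFig} forces any such traversing arc to wrap further, producing a piecewise linear, monotone and unbounded change in its intersection number with $L$; this is the argument carried out in \cite{AH:SurfaceMLS}, which I would adapt verbatim to our setting. Combining the three paragraphs recovers the full coordinate data of a point of $\M_\D(\bar S,\alpha)$ from its $\H$-intersection vector, yielding the desired injection.
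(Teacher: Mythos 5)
Your realization argument and most of your injectivity argument follow the paper's route: put $L$ in minimal position with respect to the decomposition curves (the paper does this by realizing everything geodesically, which comes to the same thing), recover the $y_i$ from the decomposition curves themselves, and recover the spiralling data in a $T_\emptyset$ from the oriented variants $\gamma_\pm$. Two points need repair. A small one first: the weights $x_i$ on the arcs of $\alpha$ in a trim annulus $T_c$ are not ``intersection numbers with the arcs pictured in'' Proposition \ref{CS1} --- those pictured arcs are components of the curve system itself, and $L$ meets them in measure zero. The paper instead tests against \emph{transverse} boundary-parallel arcs of $\H$ joining $\alpha_i$ to $\alpha_{i+2}$ (and, when a boundary component carries only two arcs of $\alpha$, arcs joining an arc of $\alpha$ to itself); the $x_i$ are then recovered from linear combinations of these $i_\gamma$'s.

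Second, and more seriously, your treatment of the connector does not close. A single arc $\gamma$ traversing $Q$ has an intersection number that is piecewise linear and unbounded in the twist, but it is \emph{not} monotone on the whole twist line $\M_\T(Q)\cong\reals^2$: for fixed boundary weight $y$ it is essentially symmetric under reversing the sense of the twist, i.e.\ under exchanging the charts $(t_1,y_1)$ and $(t_2,y_2)$ of Figure \ref{PairQFig}. So $i_\gamma$ alone cannot tell a positive twist from a negative twist of comparable magnitude. The paper breaks this symmetry by introducing a second test curve $\gamma'$, the image of $\gamma$ under a Dehn twist along the core $C_j$, and computing the difference $i_\gamma-i_{\gamma'}$, which takes distinguishable values (e.g.\ $-2y_3-2y_4$ versus $y_3$ in the case worked out in Figure \ref{PairTwist2Fig}) according to the sense of twisting; this requires a case-by-case check over the four ways a connector can be attached (two pairs of pants, one pair of pants to itself, two trim annuli, a trim annulus to a pair of pants). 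Deferring to \cite{AH:SurfaceMLS} ``verbatim'' is also not quite enough, since the cases where the connector abuts a trim annulus $T_c$ do not occur in Hatcher's setting and must be computed afresh, as the paper does.
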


\begin{proof}  To prove the first statement, begin by choosing a hyperbolic structure for the surface with cusps $S$ associated to $(\bar S,\alpha)$   This could be done, for example, by doubling the non-compact surface, or by choosing an ideal triangulation for the surface.  Next, given an essential measured lamination $L$ in $(\bar S,\alpha)$, realize it as a geodesic lamination in $S$.  We are given a decomposition $\D$ associated to a  decomposition on curves of the curve system $C$ with components $C_i$.  The elementary surfaces for the decomposition associated to $C$ are:
\begin{enumerate}[(i)]\itemsep-4pt
\item If $\bar S$ is a disk and $C=\emptyset$, then $(\bar S,\alpha)$ is itself elementary, a disk with $c\ge 3$ cusps.
\item Pairs of pants $(P,\beta)$, where $\beta$ consists of  some union of boundary components of $P$ .
\item Trim annuli $(\bar F,\beta)$  where $\beta$ is a non-empty collection of arcs in one of the boundary components of $\bar F$ together with the other boundary component.  Each trim annulus is cut off by some $C_i$ in $C$ which is boundary parallel in $\bar S$ but not in the pair $(\bar S,\alpha)$.
\item Trim annuli $T_\emptyset=(\bar F,\beta)$, where $\beta$ is one boundary component of $\bar F$.
\item Connector annuli $(Q,\beta)$ associated to some curves $C_i$ of $C$, where $\beta$ is all of $\bdry Q$.   Each connector annulus $Q$ is a regular neighborhood of some $C_i$. (Recall there is no connector annulus adjacent to a $T_\emptyset$.)
\end{enumerate}

If we realize the curve system $C$ as a geodesic system, and the lamination $L$ as a geodesic lamination, then $L$ meets $C$ transversely, except any closed leaves of $L$ which are isotopic to a component $C_i$ of $C$.  If we incorporate ``twist" in connectors $Q_i$, we are left with essential weighted curve systems in pairs of pants $(P,\beta)$ of the decomposition and trim annuli $(T,\beta)$ of the decomposition.  There is something to check here, namely that the intersection of $L$ with each elementary surface is actually essential, but we leave this to the reader.   In this way we see that $L$ is represented by a point of $\M_\D(\bar S,\alpha)$.  

The next task is to show that the points of $\M_\D(\bar S,\alpha)$ are distinguished by intersection numbers $i_\gamma(L)$.  Here we extend the arguments in \cite{AH:SurfaceMLS}.  We begin with the special case that $(\bar S,\alpha)$ is the disk with $c$ arcs on its boundary.  We dealt with this surface pair in Proposition \ref{CS4}.  Clearly the measured laminations (which are weighted curve systems) in such a disk are determined by the weights at the cusps.  The weight at a cusp or arc $\alpha_1$ is detected by the intersection number $i_\gamma(L)$ where $\gamma$ is an arc joining the two arcs adjacent to $\alpha_1$, say $\alpha_2$ and $\alpha_c$.  (Recall that there are no essential weighted curve systems if $c\le 3$.)  This proves the injectivity of the map $\I:\M_\D(\bar S,\alpha)\to \reals^\H$ in this case.

Suppose $(\bar S,\alpha)$ is a pair satisfying $\Chi_g(\bar S,\alpha)<0$ whose underlying space is not a disk.   The decomposition into elementary surfaces is non-trivial, corresponding to a non-empty essential curve system $C$.   We will find a finite number of $\theta\in \H$ such that the intersection functions $i_\theta$ are sufficient to distinguish all measured laminations in $(\bar S,\alpha)$.  We will do this by showing that the parameters we use for the intersections of a measured lamination $L$ with each elementary surface are determined by the $i_\gamma$'s we consider.

We begin with curves $\gamma$ equal to the $C_i$ used for the decomposition.   These immediately give the parameters $y_i$ for any pair of pants $(P,\bdry P)$ in the decomposition $\D$ as well as parameters $y_i$ assigning weights to closed curves in the $\alpha$-boundary of trim annuli $T_c$.
Now we add essential arcs $\gamma\in \H$ to our finite collection which join arcs of $\alpha$ in the same boundary component of $S$ and which are boundary parallel in $S$.  In particular, we need such arcs $\gamma$ connecting $\alpha_i$ and $\alpha_{i+2}$ mod $c$ if there are $c$ arcs $\alpha_i$ of $\alpha$ on a boundary component of $S$, labeled cyclically.  In addition, if a boundary component of $S$ contains two arcs of $\alpha$, we need essential boundary parallel arcs joining each arc of $\alpha$ to itself.  Using intersections with all of these curves $\gamma$, it is easy to check that the parameters $x_i$ for trim annuli $(\bar F,\beta)$ are determined by these $i_\gamma$'s.  The case $\beta=\emptyset$, however, is a special case.  In this case, we take $\gamma$ to be the curve of $\delta$ in $\bdry\bar S$, and use oriented variants $\theta=\gamma_+$ and $\theta=\gamma_-$ .  The intersection numbers $i_\theta$ then determines the weight of the spiral lamination approaching $\delta$.  These intersection numbers also determine the sense of spiraling.

Any subtlety in our argument comes from the need to determine the twist parameters $t_i$ associated to a connector $(Q,\bdry Q)$in the decomposition $\D$.   Suppose the connector $Q$ is a regular neighborhood of the curve $C_j$ in $C$. There are four cases according to whether (a) the connector joins two pairs of pants, (b) joins one boundary component of a pair of pants $P$ to another boundary component, (c) joins two trim annuli $T_c$ with $c\ge 0$, or (d) joins one trim annulus $T_c$ ($c>0$) to a pair of pants $(P,\bdry P)$.  In each case a curve $\gamma$ shown in Figure \ref{PairTwistFig} clearly determines the weight $t_1$ or $t_2$ in the standard train track in $Q$.   A second curve $\gamma'$ obtained from $\gamma$ by applying a Dehn twist on $C_j$ will be needed to resolve the ambiguity between positive or negative twist in the connector.  To prove that intersection numbers with these curves $\gamma$ and $\gamma'$ determine the twist parameter, one needs to do a case-by-case analysis.  In the case where the connector joins a trim annulus containing two arcs of $\alpha$ to a pair of pants, we show in Figure \ref{PairTwist2Fig} the arcs $\gamma$ and $\gamma'$ homotoped to be efficient with respect to standard train tracks in the combined surface pair.    See the next paragraph for a definition of ``efficient," and the method for calculating intersection numbers.  We obtain different intersection numbers depending on the sense of twisting.  For one sense of twisting we get $i_\gamma((L,\mu))-i_{\gamma'}((L,\mu))=-2y_3-2y_4$, for the other sense of twisting we get $i_\gamma((L,\mu))-i_{\gamma'}((L,\mu))=y_3$.  Also, the formulas in the figure show how to calculate the twist parameters $t_1$ and $t_2$  given the other parameters together with $i_\gamma(L)$ and $i_{\gamma'}(L)$.   See Figure \ref{PairQFig} for parameters $t_1,t_2$, which both appear as $t$ in Figure \ref{PairTwist2Fig}. To complete the proof, one must check that similar calculations are possible when the trim annulus contains a different number of arcs of $\alpha$, and one must check that similar calculations are possible in the remaining three cases.

\begin{figure}[H]
\centering
\scalebox{1}{\includegraphics{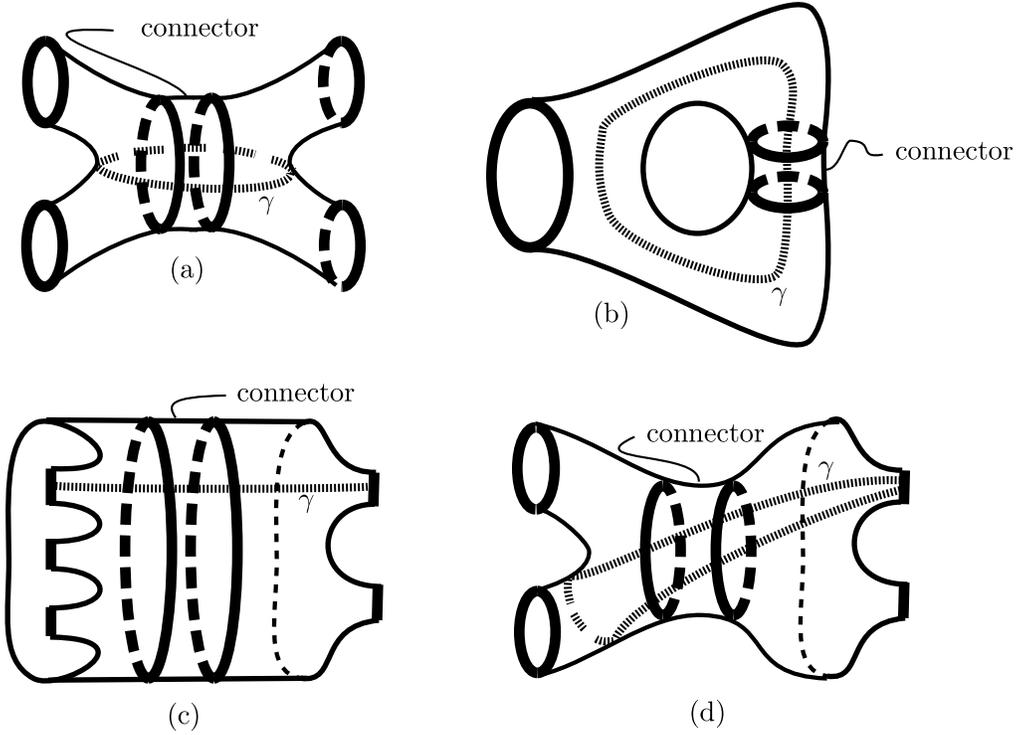}}
\caption{\small The curves $\gamma$ which detect twist parameters.}
\label{PairTwistFig}
\end{figure}

\begin{figure}[H]
\centering
\scalebox{1}{\includegraphics{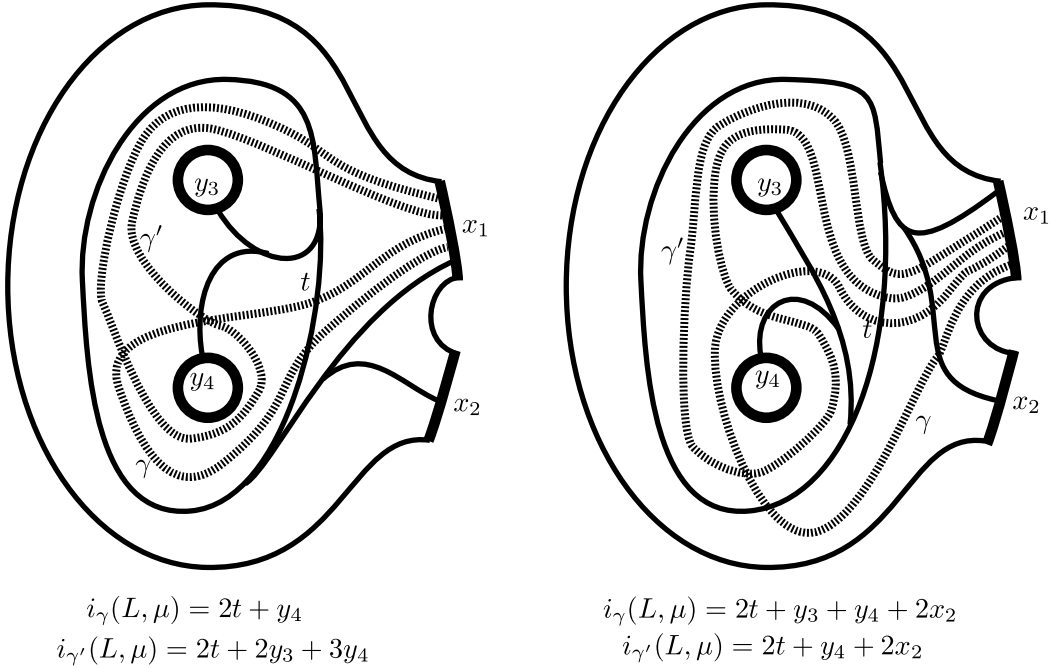}}
\caption{\small Detecting the sense of twist.}
\label{PairTwist2Fig}
\end{figure}

Now for the promised explanation of efficient curves with respect to a train track:  An embedded curve $\gamma$ transverse to a fair train track $\tau$ is {\it efficient} if the train track and curve together do not ``cut off a half-disk or quarter-disk."   More precisely, applying $\pi\inverse$, we may assume $\gamma$ intersects $N(\tau)$ in fibers, and we then define a half-disk in this context as a disk $H$ with an arc of its boundary in $\bdry_hN(\tau)$ and the other arc in $\gamma$.   A quarter disk is a disk $H$ with its boundary decomposed into 3 closed arcs, each sharing an endpoint with another arc, with one arc in $\bdry_hN(\tau)$, a second arc in $\alpha$, and a third arc in $\gamma$.  It is not difficult to show that if $\gamma$ is efficient with respect to $\tau$, then $i_\gamma(\tau(w))=\sum w_i$ where the sum is over weight vector components $w_i$ corresponding to intersections of $\gamma$ with $\tau$, so that if there are $k$ intersections of $\gamma$ with a segment having weight $w_i$, then the weight $w_i$ appears $k$ times in the sum.
\end{proof}

Our next task is to show that the map $\I=(i_\gamma)$ is continuous on $\PM_\D(\bar S,\alpha)$.  The strategy is to express $\PM_\D(\bar S,\alpha)$ as a union of weight cells $\PV(\tau)$ for standard train tracks with respect to $\tau$, and to show that the intersection functions $i_\gamma$, viewed as functions on the weight cells $\V(\tau)$ with $i_\gamma(w):=i_\gamma(\tau(w))$, are continuous on each of these weight cells.  
We will need the following definitions and lemma.

\begin{defn}\label{SplitRespectDef} Suppose $N(\tau')\cup J=N(\tau)$, where $J$ is a product $I$-bundle over a compact surface intersecting $N(\tau')$ in a subset of $\bdry_vN(\tau')\cup \bdry_hN(\tau')$, and $I$ fibers of $J$ are contained in $I$ fibers of $N(\tau)$.  Then $\tau'$ is a {\it splitting} of $\tau$ and $\tau$ is a {\it pinching} of $\tau'$.  

Suppose a lamination $L$ is fully carried by $\tau$ and is embedded transverse to fibers in $N(\tau)$.  After possibly replacing some leaves by the boundaries of their (suitably tapered) regular neighborhoods, we may assume $\bdry_h(N(\tau))\subset L$.   There is an {\it interstitial bundle}, an $I$-bundle which is the completion of the complement of $L$ in $N(\tau)$.   A {\it splitting respecting $L$} of $\tau$ is a splitting $\tau'$ achieved by removing interiors of the fibers of the restriction of the interstitial bundle to a compact submanifold of the base space of the $I$-bundle. ($L$ is then carried by $\tau'$.)

We will use the notation $\tau_1 \split \tau_2$ to indicate that $\tau_1$ is a splitting of $\tau_2$; we write $\tau_2\pinch \tau_1$ to indicate that $\tau_2$ is a pinching of $\tau_1$.  If $L$ is fully carried by $\tau_2$, and  $\tau_1$ is a splitting of $\tau_2$ respecting $L$, then we write $\tau_1\split_L\tau_2$.  
\end{defn}

\begin{lemma}[Splitting Lemma] Suppose $\tau'$ is a splitting of $\tau\embed (\bar S,\alpha)$.  If $\tau$ is fair, then so is $\tau'$.  This means that if $\tau$ has no monogons or 0-gons, then neither does $\tau'$.  Similarly, if $\tau$ is essential so is $\tau'$, which means $\tau'$ also has no (half) Reeb train tracks.
\end{lemma}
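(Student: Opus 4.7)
The plan is to treat the two assertions separately. The claim about (half) Reeb sub-train-tracks is nearly formal: since $N(\tau')\subset N(\tau)$ and the $I$-fibers of $N(\tau')$ are sub-intervals of the $I$-fibers of $N(\tau)$, any embedding of $\rho$ or $\sigma$ into $N(\tau')$ transverse to the fibers of $N(\tau')$ is automatically an embedding into $N(\tau)$ transverse to the fibers of $N(\tau)$, and the bounding annulus in $\bar S$ is intrinsic to the ambient surface pair and so is unaffected by the splitting. Consequently, a (half) Reeb sub-train-track of $\tau'$ would be one of $\tau$, and the contrapositive is the desired implication.

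For the fair claim, the strategy is to analyze the effect of removing one component of $J$ at a time. Since $J$ is a product $I$-bundle, each component is either a rectangle (base an arc) or an annulus (base a circle), so by writing the splitting as a sequence of single-component splittings $\tau = \tau_0 \pinch \tau_1 \pinch \cdots \pinch \tau_n = \tau'$ one reduces to the case where $J$ has a single component. Each complementary component $R'$ of $\tau_{i+1}$ then arises as the union of one or two complementary components $R_i$ of $\tau_i$ with the new piece of $J$ attached along arcs or circles in $\bdry_v N(\tau_i)\cup \bdry_h N(\tau_i)$.

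The arithmetic goes as follows. If the new piece is an annulus, then $\chi(J)=0$ and the gluing circles contribute nothing, so $\chi(R')=\sum_i\chi(R_i)$; since circles contribute no arcs to the $\beta$-count, $\Chi_g(R') = \sum_i\Chi_g(R_i) \le 0$. If the new piece is a rectangle attached along one or two arcs, inclusion-exclusion gives $\chi(R')=\sum_i\chi(R_i)+1-(\text{number of attaching arcs})$, and the arc count $c'$ of $\beta'$ is obtained from the $c_i$ by a local rule that depends on the nature of the endpoints of the two vertical sides of the rectangle (interior of an arc of $\alpha$, switch on $\bdry_v N(\tau_i)$, endpoint on $\delta$). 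A case-by-case verification then yields $\Chi_g(R')\le \sum_i\Chi_g(R_i)\le 0$, which chains along the sequence to give the statement for $\tau'$.

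The main obstacle is the bookkeeping for the rectangle case: one must enumerate the local configurations for the endpoints of the attached rectangle's vertical sides and confirm that in each configuration the change in the arc count $c$ exactly balances, or overcompensates, the $+1$ contribution from $\chi(J)$. Each individual case is an elementary local count, but there are several configurations to check, and setting up the correspondence between arcs of $\beta_i$ and arcs of $\beta'$ carefully is where the substantive work lies.
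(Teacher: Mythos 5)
The paper states the Splitting Lemma without any proof, so there is nothing of the author's to compare against; I can only judge your proposal on its own terms. Your treatment of the Reeb assertion is complete and correct: $N(\tau')\subset N(\tau)$, the fibers of $N(\tau')$ are subarcs of fibers of $N(\tau)$, so an embedding of $\rho$ or $\sigma$ in $N(\tau')$ transverse to fibers is already such an embedding in $N(\tau)$, and the annulus conditions live in $(\bar S,\alpha)$ and are untouched by the splitting; together with the fairness half this gives preservation of ``essential.'' Your strategy for fairness --- peel off one component of $J$ at a time and track $\Chi_g$ of complementary regions --- is also the right one, and the target inequality $\Chi_g(R')\le\sum_i\Chi_g(R_i)\le 0$ is in fact true.

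The gap is that the case analysis you yourself flag as ``where the substantive work lies'' is not carried out, and the enumeration you set up would not cover all configurations even if it were. First, a complementary region of $\tau'$ need not contain \emph{any} complementary region of $\tau$: a rectangle of $J$ interior to $N(\tau)$, whose vertical sides become components of $\bdry_vN(\tau')$, is itself a brand-new complementary digon, and an interior annulus of $J$ is a new smooth annular region (both have $\Chi_g=0$, but ``zero attached regions'' is a case your ``one or two $R_i$'' framing omits). Second, a component of $J$ can be attached to old regions along arcs of its \emph{horizontal} boundary lying in $\bdry_hN(\tau)$, not only along its vertical sides; in particular your claim that for an annulus component ``the gluing circles contribute nothing, so $\chi(R')=\sum_i\chi(R_i)$'' fails when the attaching locus is a union of arcs of the boundary circles rather than whole circles --- there $\chi$ drops and new cusps appear at the arc endpoints, so the conclusion $\Chi_g(R')\le 0$ survives but not by your stated computation. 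The clean ledger is: $\chi(R')=\chi(J_0)+\sum_i\chi(R_i)-a$ with $a$ the number of attaching arcs; each attaching arc that contains a component of $\bdry_vN(\tau)$ absorbs exactly one old cusp (gaining $+\tfrac12$ against its $-1$), every other attaching arc costs $-1$ with no compensation, and each free portion of a vertical side of $J_0$ contributes at most one new cusp worth $-\tfrac12$; summing these against $\chi(J_0)\le 1$ gives a net change $\le 0$ in every configuration, including those where a side of $J_0$ lies in $\alpha$ or in a fiber over $\delta$. Until those local checks are actually written down, the proposal is a correct plan with the decisive verification still owed.
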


The following is adapted from a result in \cite{UO:MeasuredLaminations}.  There is an alternative approach, namely the one in \cite{AH:SurfaceMLS}, for proving the intersection functions $i_\gamma$ are continuous (and piecewise linear) which almost certainly works in our context and might be easier.  
\begin{proposition}\label{ConvexProp} Suppose $(\tau,\bdry \tau)\embed (\bar S,\alpha)$ is a fair train track, and $\gamma$ is the homotopy class of an arc or closed curve as above.  Then for any $\theta\in \H$ the function
$i_\theta:\W_\rationals(\tau)\to \reals$ is convex and has a unique continuous extension to $\intr(\V(\tau))$.
\end{proposition}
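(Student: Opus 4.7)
The plan is to exploit efficient representatives of $\theta$ with respect to $\tau$ in order to exhibit $i_\theta$ as a piecewise linear function on $\V(\tau)$, and then deduce convexity and continuity on the interior.

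I would first dispose of the case $\theta=\gamma_\pm$ where $\gamma$ is a closed curve of $\delta$. Here $i_\theta(\tau(w))$ is by definition the total weight of branches of $\tau$ attached to $\gamma$ with the prescribed sense of branching, which is an explicit linear combination of coordinates of $w$. Thus $i_\theta$ is already linear (hence convex) on $\V_\rationals(\tau)$ and extends linearly to $\intr(\V(\tau))$, with nothing further to prove in this case.

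For the remaining $\theta$ (closed curves or arcs not homotopic into $\delta$), I would select a representative $\gamma$ in efficient position with respect to $\tau$, in the sense defined at the end of the proof of Proposition \ref{InjectiveProp}. Such $\gamma$ exists by a standard tightening argument: as long as $\gamma$ and $N(\tau)$ cut off some half-disk or quarter-disk, one isotopes $\gamma$ across it to strictly decrease $|\gamma\cap N(\tau)|$, and this descent terminates. The formula at the end of that proof then gives $|\gamma\cap\tau(w)|=\sum_i k_i(\gamma)\,w_i$ for every $w\in\V_\rationals(\tau)$, where $k_i(\gamma)$ counts crossings of $\gamma$ with the $i$-th segment of $\tau$; this is a linear upper bound for $i_\theta$. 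To exhibit the actual minimum everywhere on $\V(\tau)$ I would use the splitting formalism of Definition \ref{SplitRespectDef}: for each $w\in\V(\tau)$ take a splitting $\tau'\split\tau$ that fully carries $\tau(w)$; the Splitting Lemma guarantees $\tau'$ is still fair, so the efficient-representative argument applied to $\tau'$ produces a linear formula valid on the sub-cone $\V(\tau')\subset\V(\tau)$. These sub-cones cover $\V(\tau)$ as a polyhedral subdivision, and the associated linear formulas fit together into a well-defined, positively homogeneous, piecewise linear, convex function $i_\theta$ on $\V_\rationals(\tau)$. Standard continuity of convex functions on open convex sets then yields the unique continuous extension to $\intr(\V(\tau))$.

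The main obstacle is the assembly step above: verifying that the finitely many linear formulas, one per sub-cone of the splitting decomposition of $\V(\tau)$, fit together into a convex piecewise linear function. This requires tracking how the crossing counts $k_i(\gamma)$ transform under a pinching $\tau\pinch\tau'$ and checking that the ``kinks'' at shared faces bend the function in the convex direction rather than the concave one. Control of this combinatorial behaviour, using the structure of half-disks and quarter-disks together with the Splitting Lemma, is where the bulk of the technical work lies, and is why the proof is adapted from \cite{UO:MeasuredLaminations} rather than carried out from scratch.
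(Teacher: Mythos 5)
Your reduction of the problem to the linear case $\theta=\gamma_{\pm}$ for closed curves of $\delta$ matches the paper, but for the remaining $\theta$ your argument has a genuine gap, and it sits exactly where the content of the proposition lies. You propose to cover $\V(\tau)$ by sub-cones $\V(\tau')$ coming from splittings, obtain a linear formula for $i_\theta$ on each sub-cone from an efficient representative of $\gamma$, and then ``assemble'' these into a convex piecewise linear function, conceding that verifying the kinks bend the right way ``is where the bulk of the technical work lies.'' But that verification \emph{is} the proposition: convexity here is equivalent to the subadditivity $i_\gamma(w_0+w_1)\le i_\gamma(w_0)+i_\gamma(w_1)$, and nothing in your setup forces the glued function to bend convexly rather than concavely. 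Indeed the natural heuristic behind your construction --- $i_\gamma$ as an infimum, over representatives of $\gamma$, of per-representative crossing counts --- points toward an infimum of linear functions, which is \emph{concave}; so the deferred step is not routine bookkeeping but the actual theorem. The paper proves subadditivity directly: it realizes $\tau(w_0+w_1)$ as the cut-and-paste sum $C_0+C_1$ of integer curve systems, takes minimal-position representatives $\gamma_0,\gamma_1$ of $\gamma$ for $C_0$ and $C_1$, pulls $\tau$, $C_0$, $C_1$ back under a homotopy $h$ from $\gamma_0$ to $\gamma_1$ to an annulus or digon $\bar F$, and then performs the cut-and-paste operations in a controlled order (edgemost arcs first, using the Splitting Lemma and fairness of $h\inverse(\tau')$ to rule out bad local configurations) so that the count $|\E|$ of essential arcs, which starts at $i_\gamma(C_0)+i_\gamma(C_1)$, never increases. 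None of this is present in, or implied by, your outline.

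A secondary problem: your opening step asserts that any $\gamma$ can be isotoped to be efficient with respect to $\tau$ itself by a terminating descent on $|\gamma\cap N(\tau)|$. If that were true, the single linear formula $\sum_i k_i(\gamma)\,w_i$ would compute $i_\gamma$ exactly on all of $\V(\tau)$ (this is what efficiency means in the paper), making $i_\gamma$ linear and your sub-cone decomposition unnecessary --- but intersection functions on weight cones are genuinely only piecewise linear (e.g.\ across a twisting connector), so efficiency with respect to a fixed fair $\tau$ cannot always be achieved. This is precisely why the paper, in the proposition following this one, first homotopes $\gamma$ into vertical/horizontal position relative to a \emph{particular} carried lamination and only then splits to a $\tau'$ (depending on $w$) with respect to which $\gamma$ is efficient. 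You should either prove convexity by the cut-and-paste subadditivity argument or supply an independent proof that your cone-by-cone linear formulas satisfy the convexity inequality across walls; as written the proposal assumes the conclusion.
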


\begin{proof}  If $\theta$ is an oriented closed curve in $\delta$, then it is easy to verify that $i_\theta$ is linear, so we assume henceforth that $\theta=\gamma$ is not oriented and not homotopic into $\delta$.

To prove convexity on rational points it is enough to prove convexity on integer points in the cone $\V(\tau)$.  This means it is enough to prove that if $w_0$ and $w_1$ are integer invariant weight vectors, then $i_\gamma(w_0+w_1)\le i_\gamma(w_0)+i_\gamma(w_1)$.    Letting $C_0=\tau(w_0)$ and $C_1=\tau(w_1)$, we can abuse notation, writing $\tau(w_0+w_1)=C_0+C_1=C$.  From this point of view, we wish to prove $i_\gamma(C_0+C_1)\le i_\gamma(C_0)+i_\gamma(C_1)$.   We embed the curve systems $C_0$ and $C_1$ in $N(\tau)$ transverse to fibers and transverse to each other; then one can obtain $C=C_0+C_1$ from $C_0$ and $C_1$ by performing cut-and-paste operations at points of intersection such that one obtains the curve system $C$ transverse to fibers and inducing the weight vector $w_0+w_1$.  If these curve systems are mutually transverse, we obtain a train track $\tau'$ carrying $C_0$ and $C_1$ by pinching the two curves near points of intersection as shown in Figure \ref{PairSplitFig}.  Clearly, then, $\tau'$ is a splitting of $\tau$ and carries $C_0$ and $C_1$.  From the Splitting Lemma we conclude that $\tau'$ is fair.

\begin{figure}[H]
\centering
\scalebox{1}{\includegraphics{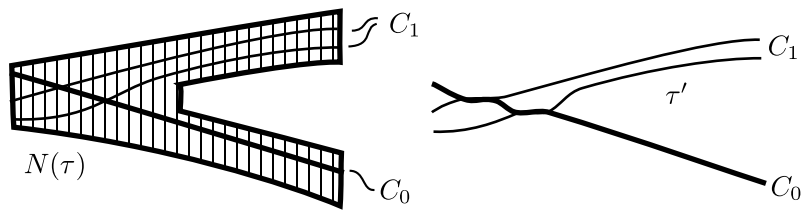}}
\caption{\small Splitting $\tau$ to get $\tau'$ carrying $C_0$ and $C_1$.}
\label{PairSplitFig}
\end{figure}

We can now homotope $\gamma$ to $\gamma_0$ to minimize $i_\gamma(C_0)$ and we can homotope $\gamma$ to $\gamma_1$ to minimize $i_\gamma(C_1)$.   Let $h$ be a homotopy from 
$\gamma_0$ to $\gamma_1$.  Either this is a map $h(t,x)$ where $t\in [0,1]$ and $x\in [0,1]=I$ or $x\in S^1$ depending on whether $\gamma$ is an arc or closed curve.   Thus we can regarding $h$ as a map from an annulus $\bar F$ with $h_0=\gamma_0$ and $h_1=\gamma_1$, or we can regard $h$ as a map $h:(\bar F,\beta)\to (\bar S,\alpha)$, where $\bar F$ is a rectangular disk and $\beta\subset \bdry \bar F$ consists of two closed arcs.  The latter surface pair is a digon.  We can simplify notation by assuming $\bar F=( \bar F,\beta)=(\bar F,\emptyset)$ in case $\bar F$ is an annulus.  Further, we will denote by $\delta_i$ the component of  $ \cl(\bdry \bar F\setminus \beta)$ which is mapped to $\gamma_i$.  We replace $h$ by a map transverse to $\tau'$ and consider the pullback $h\inverse(\tau')$ of $\tau'$, $h\inverse(C_0)$, and $h\inverse(C_1)$ to $\bar F$.      Before proceeding, however, we must modify the map $h$ slightly.  Namely, if we see a 0-gon in the complement of $h\inverse(\tau')$, then $h$ restricted to the boundary of the 0-gon, a closed curve in $\bdry_h(N(h\inverse(\tau'))$ is a map to a component of $\bdry_hN(\tau')$.  If it has degree 0, we can homotope $h$ to eliminate the 0-gon in $h\inverse(\tau')$.  The homotopy either eliminates a simple closed curve from $h\inverse(\tau')$ or it eliminates intersections of $h\inverse(C_0)$ and $h\inverse(C_1)$ on the boundary of the 0-gon.  Otherwise, if $h$ restricted to the boundary of the 0-gon had non-zero degree,  then $\tau'$ would have a 0-gon, a contradiction.  So we can assume $h\inverse(\tau')$ has no complementary 0-gons.  Similarly, if $h\inverse(\tau')$ had a complementary monogon, the boundary would be mapped to a component of $\bdry N(\tau')$ meeting $\bdry_vN(\tau')$ in exactly one fiber, representing a homotopically trivial loop in a complementary component of $N(\tau')$.  This is only possible if $\tau'$ has a complementary monogon, a contradiction.  Therefore we may assume that $h\inverse(\tau')$ has no monogons or 0-gons, i.e. is a fair train track.

 From the pattern $h\inverse(\tau')$, $h\inverse(C_0)$, and $h\inverse(C_1)$ in $\bar F$, we obtain the pullback of $C=\tau(w_0+w_1)$ by cut-and-paste on points of intersection of the two curve systems so that the resulting curve system is carried by $h\inverse(\tau')$.  
We observe that there are no (inessential) arcs of $h\inverse(C_i)$ with both ends in $\delta_i$, otherwise the intersection of $\gamma_i$ with $C_i$ is not minimal.  Also there are no arcs of $h\inverse(C_i)$ with one end in $\beta$ and one end in $\delta_i$, otherwise the intersection of $\gamma_i$ with $C_i$ is not minimal.  If $h\inverse(C_i)$ contains an arc with ends in two different components of $\beta$ or if it contains an essential closed curve in an annular $\bar F$, then $i_\gamma(C_i)=0$, say $i_\gamma(C_0)=0$.  Then $i_\gamma(C_0+C_1)=i_\gamma(C_1)$, which implies what we want: $i_\gamma(C_0+C_1)\le i_\gamma(C_0)+i_\gamma(C_1)$.  So we can assume that neither $h\inverse(C_i)$ contains arcs with ends in $\beta$.  

In order to obtain an upper bound for $i_\gamma(C)$, we will perform cut-and-paste operations at points of intersection of $h\inverse(C_0)$, and $h\inverse(C_1)$ in $\bar F$.   We will do these operations in  a certain order, keeping track of the pattern.   Of course, performing switches on all intersections yields the pullback of $C$.   After performing any finite number of cut-and-paste operations in $\bar F$, we will have a pattern of arcs and closed curves in $\bar F$ consisting of a collection $\E$ of essential arcs in $\bar F$ (with boundary in $\bdry \bar F\setminus\beta$), together with a collection $\A$ of inessential arcs, and a collection $\C$ of closed curves.   We let $\A=\A_0\cup\A_1$ where $\A_i$ consists of inessential arcs with both ends in $\delta_i$.   We observe that initially $|\E|=i_\gamma(C_0)+i_\gamma(C_1)$, where $|\E|$ denotes the number of essential arcs.  The goal is to show that $|\E|$ does not increase when we do cut-and-paste in the appropriate order.   Since $\gamma$ can then be homotoped so that $|C\cap\gamma|=|\E|$, this will finish the proof.  When we perform a cut-and-paste operation at a point of intersection, we also modify the train track $h\inverse(\tau')$ to obtain $\breve\tau$ by splitting the train track on the arc of contact corresponding to the point of intersection.  Thus at every stage of our induction argument, we will have our immersed curve system $\A\cup\E\cup\C$ carried by $\breve\tau$, and $\breve\tau$ will be a fair train track because it is a splitting of the fair $h\inverse(\tau')$.

Inductively,  assuming we have already done some cut-and-paste operations without increasing $|\E|$ and so $\C=\emptyset$, we consider an edgemost arc $a$ of $\A$ among arcs of $\A$ which intersect other arcs or closed curves non-trivially.  If $a$ cuts a half-disk $H$ from $\bar F$, then we choose an edgemost arc $b$ in $H$ of another curve in the current system $\A\cup \E\cup \C$ in $H$.  We perform the cut-and-paste operations at the end(s) of $b$.  Using the fact that $\breve\tau$ is fair, the possibilities are shown in Figure \ref{PairCutPasteFig}, after we rule out possibilities that would imply the existence of 0-gons or monogons.   From these local figures, we conclude that if $b\subset e$, where $e\in \A\cup \E\cup \C$ is another curve of our current immersed curve system, then the cut-and-paste yields arcs isotopic to $a$ and $e$, and $|\E|$ does not change.  Furthermore, we do not introduce closed curves, so $\C=\emptyset$ remains true.  We repeat the type of cut-and-paste described above until no arc of $\A$ intersects any other curve.

\begin{figure}[H]
\centering
\scalebox{1}{\includegraphics{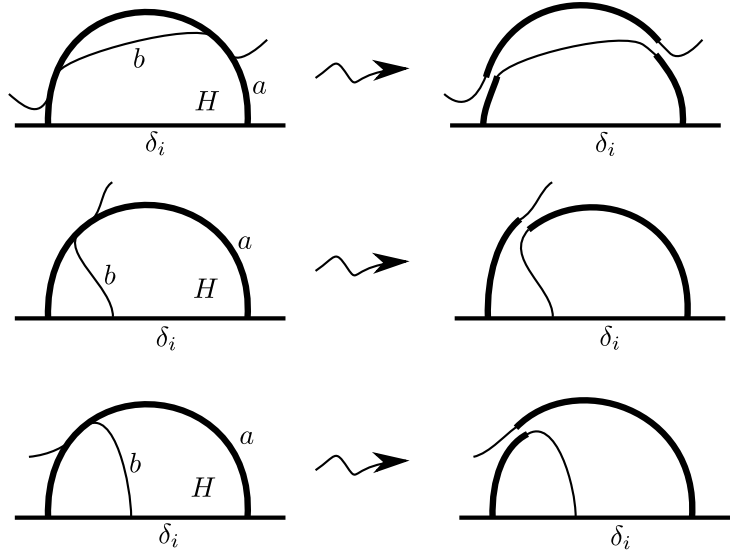}}
\caption{\small Possibilities for a cut-and-paste operation involving an edgemost arc $a$.}
\label{PairCutPasteFig}
\end{figure}

Now ignoring the arcs of $\A$, performing a cut-and-paste at a point of intersection of two other arcs, which which must both be in $\E$, we do not increase $|\E|$, since this was just half the number of points of $\E\cap\bdry \bar F$, and the boundary points are unchanged by the operations.  Clearly, no new closed curve is introduced, so $\C$ remains empty.   Of course, the operation may introduce inessential arcs, so typically $|\E|$ becomes strictly smaller and $|\A|$ can increase.  If we have introduced a new arc of $\A$ which intersects other curves, we return to the algorithm for eliminating these intersections.  We repeat these operations until no intersections remain, and we obtain the $h\inverse(C)$ in $\bar F$.   Now it is clear that we can homotope $\gamma$ so that the intersection with $C$ is $|\E|$ and we have finished the proof that $i_\gamma(C_0+C_1)\le i_\gamma(C_0)+i_\gamma(C_1)$.  As we observed, this proves the required convexity on $\W_\rationals(\tau)$.

Proving that there is a unique continuous extension of $i_\gamma$ to $\intr(\V(\tau)$, given the convexity on rational points, is elementary analysis.
\end{proof}

The next task is to show that the unique continuous extension described above actually gives $i_\gamma(w)$ at non-rational points.

\begin{proposition}  Suppose $(\tau,\bdry \tau)\embed (\bar S,\alpha)$ is a fair train track, and $\gamma$ is the homotopy class of an arc or closed curve as above.  Then the function
$i_\gamma:\V(\tau)\to \reals$ is convex and therefore continuous on $\intr(\V(\tau))$.
\end{proposition}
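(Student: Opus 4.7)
The plan is to establish the stronger assertion that $i_\gamma$ is actually \emph{linear} on the entire cone $\V(\tau)$, from which convexity on $\V(\tau)$ and continuity on $\intr(\V(\tau))$ follow at once. The key tool is the notion of an efficient representative introduced at the close of the proof of Proposition~\ref{ConvexProp}.

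First I would fix an efficient representative $\gamma_0$ of the homotopy class $\gamma$ with respect to $\tau$: a transverse representative cutting off neither a half-disk nor a quarter-disk. Starting from any transverse representative, each half-disk or quarter-disk detected between the curve and $N(\tau)$ permits an isotopy strictly decreasing the (nonnegative integer) count of intersections with $\tau$, so the reduction terminates at an efficient $\gamma_0$. The hypothesis that $\tau$ is fair guarantees that the auxiliary disks encountered along the way do not interact with monogons or $0$-gons in the complementary surface pair.

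Next I would observe that for any transverse representative $\gamma'$ of $\gamma$ the assignment $w\mapsto \mu_w(\gamma')=\sum_i n_i(\gamma')\,w_i$ is linear in $w$, where $n_i(\gamma')$ records the number of intersections of $\gamma'$ with the segment $\sigma_i$ of $\tau$. By the final paragraph of the proof of Proposition~\ref{ConvexProp}, the efficient $\gamma_0$ realizes the minimum intersection with every rational carried curve system $\tau(w)$, so the linear function $f_{\gamma'}(w)=\mu_w(\gamma')-\mu_w(\gamma_0)$ is nonnegative on the dense subset $\V_\rationals(\tau)\subset \V(\tau)$. A linear function nonnegative on a dense subset of a convex cone is nonnegative throughout, so $\mu_w(\gamma_0)\le \mu_w(\gamma')$ for \emph{every} $w\in\V(\tau)$ and every transverse representative $\gamma'$.

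Since any representative of $\gamma$ can be perturbed to be transverse to $\tau$ without altering the measure $\mu_w$, it follows that $i_\gamma(w)=\mu_w(\gamma_0)=\sum_i n_i(\gamma_0)\,w_i$ on all of $\V(\tau)$. A linear function is convex and continuous, completing the argument. The main potential obstacle is the input from Proposition~\ref{ConvexProp} that an efficient $\gamma_0$ minimizes intersection with every rational carried curve system; once that combinatorial/geometric fact is in hand, the passage from rational to arbitrary weights is an elementary density argument applied to the linear functions $f_{\gamma'}$.
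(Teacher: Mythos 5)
Your argument proves too much: if it were correct, $i_\gamma$ would be \emph{linear} on all of $\V(\tau)$, and that is false in general. Take $\tau$ to be a complete standard train track whose cone $\V(\tau)$ is top--dimensional, let $C$ be an essential weighted curve system fully carried by $\tau$ (a rational point of $\intr(\V(\tau))$), and let $\gamma$ be a component of $C$. Then $i_\gamma$ vanishes at the interior point representing $C$ (disjointly embedded essential curves have zero intersection number), is nonnegative on all of $\V(\tau)$, and is positive at other points of the cone, since for a surface pair of sufficient complexity $\V(\tau)$ also contains curve systems meeting $\gamma$ essentially. A function of the form $\sum_i n_i(\gamma_0)w_i$ with $n_i\ge 0$ that vanishes at a point with every $w_i>0$ is identically zero, so no single formula $i_\gamma(w)=\sum_i n_i(\gamma_0)w_i$ can hold on the whole cone. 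The step that fails is the assertion that one efficient representative $\gamma_0$ realizes the minimal intersection with \emph{every} carried curve system $\tau(w)$. Efficiency only excludes half-disks and quarter-disks between $\gamma_0$ and $N(\tau)$; it does not exclude a bigon between $\gamma_0$ and a leaf of a particular carried lamination whose leaf-side runs \emph{through} $N(\tau)$, across switches and past complementary digons or annuli (which a fair train track is permitted to have). In the example above, realizing the minimal (zero) intersection with $C$ requires isotoping $\gamma$ into $N(\tau)$ itself, destroying transversality, and no transverse efficient position can detect this. Your citation is also misplaced: the final paragraph of the proof of Proposition~\ref{ConvexProp} is the elementary-analysis extension step, while the efficiency criterion appears in the proof of Proposition~\ref{InjectiveProp}, and even there the formula $i_\gamma(\tau(w))=\sum_i w_i$ is invoked for particular curves and particular standard train tracks whose configurations are checked by hand, not as a statement valid for one $\gamma_0$ and all $w$ simultaneously.

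The paper's proof is instead local in $w$: given $w\in\intr(\V(\tau))$, it realizes $L=\tau(w)$, homotopes $\gamma$ into minimal position with that particular $L$ (vertical or horizontal with respect to a rectangle decomposition of $N(\tau)$), and then splits $\tau$ along the interstitial bundle of $L$ to obtain a track $\tau'$, depending on $L$, with respect to which the new position of $\gamma$ is efficient. This yields $i_\gamma(\tau'(v))=\sum_i v_i$, hence linearity of $i_\gamma$ only on the image of the linear map $\V(\tau')\to\V(\tau)$, a convex subset containing $w$; on that subset $i_\gamma$ therefore agrees with the continuous extension furnished by Proposition~\ref{ConvexProp}. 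The dependence of the efficient position (and of $\tau'$) on $w$ is exactly why $i_\gamma$ is convex and piecewise linear rather than linear, and it is the ingredient your proposal omits.
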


\begin{proof}  To prove this, suppose we have a measured lamination $\tau(w)=(L,\mu)$ fully carried by $\tau$.  We assume that $L$ is embedded transverse to fibers in $N(\tau)$, and after possibly replacing some leaves by the boundaries of their (suitably tapered) regular neighborhoods, we may assume $\bdry_h(N(\tau))\subset L$.   We know that we can realize $i_\gamma(L)$ as an actual intersection by homotoping $\gamma$.   Now $N(\tau)$ can be subdivided into rectangular products of the form $P=s\times I$, where $s$ is a segment of the train track.  For every such product, we homotope $\gamma$ further (without increasing the intersection with $L$) such that every component of $\gamma\cap P$ becomes vertical, i.e. a subset of a fiber $\{x\}\times I$, or ``horizontal", i.e. contained in the interstitial bundle.  We extend the homotopy to modify the intersections with complementary components of $L$ as necessary. We split $N(\tau)$ by removing fibers of the interstitial bundle intersected by $\gamma$ (and split a bit further) obtaining a splitting $\tau'$ of $\tau$ which fully carries $L=\tau'(w')$.  Now $\gamma$ intersects $\tau'$ vertically, which means that one can apply the projection $\pi:N(\tau')\to \tau'$ to replace $\gamma$ by a curve which is transverse to $\tau'$.   In fact, it is easy to check that there do not exist any half-disks $(H,\beta)$ with $\beta\subset \bdry_hN(\tau')$ and with the complementary boundary arc in $\gamma$.   In other words, $\gamma$ is efficient with respect to $\tau$, see the proof of Proposition \ref{InjectiveProp}. 
This can be used to show that $i_\gamma(\tau'(v))=\sum_iv_i$ where the sum is over intersections of $\gamma$ with $\tau'$, and adds the weights at each intersection.  Hence $i_\gamma$ is linear on $\V(\tau')$, and continuous.  The fact that $\tau'$ is a splitting of $\tau$ implies that there is a linear map $\V(\tau')\to \V(\tau)$ which maps $w'$ to $w$ so that $\tau'(w')=\tau(w)$ as a measured lamination.  This shows that $i_\gamma$ is linear on a convex subset of $\V(\tau)$, which is the convex hull of rational points.  Thus, $i_\gamma$ must be the same as the continuous extension described in the previous proposition, on this convex hull.  Since we started with arbitrary $w\in \intr(\V(\tau))$, we have proved the proposition.
\end{proof}

One shortcoming of Proposition \ref{ConvexProp} is that we have not proved that $i_\gamma$ is continuous on all $\W_\rationals(\tau)$.  For essential laminations in 3-manifolds carried by branched surfaces, there is a similar convexity result, see \cite{UO:MeasuredLaminations}, and there are examples to show that $i_\gamma$ can be discontinuous at $\bdry \V(\tau)$.   In our setting, we can strengthen the result to obtain continuity on all of $\V(\tau)$.   The first step towards this strengthening is the following lemma.

We will say a weighted curve system $K$ is {\it maximal} if it is not possible to embed another curve disjointly from those in $K$, unless the new curve is isotopic to one of the curves in $K$.

\begin{lemma} \label{MaximalLemma} Suppose $K$ is a maximal curve system fully carried by a fair (or essential) train track $\tau$ in a surface pair $(\bar S,\alpha)$.  Then the complementary surfaces with cusps for $\tau$ are of the following types $(\bar F,\beta)$

(i) $\bar F$ a disk with $\beta$ consisting of 2 or  3 arcs on its boundary, i.e. $(\bar F,\beta)$ a digon or trigon,

(ii) $\bar F$ an annulus with $\beta$ consisting of at most one arc on its boundary,

(iii) $\bar F$ a pair of pants with $\beta=\emptyset$.

\noindent If $\tau$ is a good train track, we can of course rule out annuli and digons as well.
\end{lemma}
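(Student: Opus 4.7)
The plan is to argue by contradiction: assume some complementary surface pair $(\bar F,\beta)$ of $\tau$ is not among those listed, and construct an essential properly embedded arc or simple closed curve $\gamma$ in $\bar F$, disjoint from $K$ and not isotopic to any component of $K$, whose addition to $K$ would contradict the maximality of $K$. From the start I would use that $\tau$ being fair forces $\Chi_g(\bar F,\beta)\le 0$ on every complementary region, which rules out $\bar F$ being a sphere, a disk with at most one cusp, or a sphere with one cusp.

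The argument then reduces to a case analysis on the topological type of $\bar F$ beyond those in the list. If $\bar F$ is a disk with $c\ge 4$ arcs of $\beta$, I would take $\gamma$ to be an arc joining two non-adjacent arcs of $\beta$. If $\bar F$ is an annulus with $c\ge 2$ arcs of $\beta$, I would take $\gamma$ to be an arc joining two non-adjacent cusps on the same boundary component, or an arc joining a cusp on one boundary component to a cusp on the other. If $\bar F$ is a pair of pants with $c\ge 1$ arcs, an arc from an arc of $\beta$ to a distinct boundary component works. If $\bar F$ has genus $g\ge 1$, a non-separating simple closed curve suffices. If $\bar F$ has at least four boundary components, a separating curve bounding a pair of pants with two of them gives a new essential curve. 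In each case the constructed $\gamma$ is properly embedded in $\bar F$ and by construction not isotopic to any component of $\beta$ nor to any component of $\cl(\partial\bar F\setminus\beta)$.

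To close the contradiction I would verify that each such $\gamma$ is essential in $(\bar S,\alpha)$ and not isotopic to any component of $K$. The first uses that the complementary surface pair is $\pi_1$-injective in $(\bar S,\alpha)$, which is guaranteed because $\tau$ is fair (no monogons and no $0$-gons) and, where relevant, essential; for the second, since $\gamma\subset\bar F$ is disjoint from $N(\tau)$ while every curve of $K$ lies in $N(\tau)$, any isotopy of $\gamma$ onto a curve of $K$ would force $\gamma$ to be boundary-parallel in $\bar F$, contradicting the construction.

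The main obstacle is this last verification, in particular excluding that $\gamma$ becomes isotopic into $\delta$ in $(\bar S,\alpha)$ via components of $\delta$ lying on $\partial\bar F$; this again reduces to the claim that $\gamma$ is not boundary-parallel in $(\bar F,\beta)$. The final sentence of the lemma regarding good train tracks is then immediate: good means every complementary region has $\Chi_g<0$, which forbids the digon ($\Chi_g=0$) and the annulus with $\beta=\emptyset$ ($\Chi_g=0$), the two cases in the list with $\Chi_g=0$.
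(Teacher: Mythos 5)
Your overall strategy is the same as the paper's: assume some complementary region $(\bar F,\beta)$ is not on the list, produce an essential curve of the pair $(\bar F,\beta)$, and contradict maximality of $K$. The paper disposes of this in two sentences, and your case analysis fills in which curve to take in each unlisted topological type, which is fine as far as it goes.

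There is, however, one concrete gap, and it is precisely the step the paper's proof singles out. The arcs of $\beta$ come in two kinds: arcs of $\alpha\setminus\intr(N(\tau))$ and arcs lying in the vertical boundary $\bdry_vN(\tau)$. In most of your cases (e.g.\ the disk with $c\ge 4$ cusps, or the annulus with $c\ge 2$ cusps) the arc $\gamma$ you construct has one or both endpoints on arcs of the second kind. Such a $\gamma$ is \emph{not} a properly embedded arc of $(\bar S,\alpha)$ --- its endpoints are not in $\alpha$ --- so it cannot be added to the curve system $K$ and does not by itself contradict maximality. The missing idea is the paper's ``paralleling'' step: push each such endpoint into $N(\tau)$ and continue the arc alongside a leaf of $K$ (which runs transverse to the fibers through that piece of $\bdry_vN(\tau)$, since $\tau$ fully carries $K$) until the extended curve either closes up or terminates on $\alpha$; one then checks that the resulting properly embedded curve is essential, disjoint from $K$ after a small isotopy, and not isotopic to a component of $K$. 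Your closing argument that ``$\gamma$ is disjoint from $N(\tau)$ while $K\subset N(\tau)$'' no longer applies verbatim to this extended curve, so the non-isotopic-to-$K$ verification also needs to be redone for it. For the closed-curve cases (genus, or many boundary components with $\beta=\emptyset$ there) your argument is essentially complete, and your reading of the final sentence via $\Chi_g<0$ is correct for the two $\Chi_g=0$ regions (digon and cuspless annulus).
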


\begin{proof}  This is an easy exercise.  If there is a complementary surface with cusps which is not on this list, we can regard it as a surface pair $(\bar F,\beta)$, where $\beta$ may include arcs or closed curves in $\alpha$.  There is an essential curve in $(\bar F,\beta)$, which can easily be extended to an essential curve in $(\bar S,\alpha)$ by paralleling an existing curve of $K$.
\end{proof}

\begin{lemma}\label{ContinuationLemma}  Suppose $\tau$ is an essential train track in $(\bar S,\alpha)$, and suppose $(L,\mu)=\tau(w)$ is fully carried by a sub train track $\hat \tau$ of $\tau$.  Suppose $\tau$ fully carries a maximal curve system.  Suppose $(K,\nu)=\tau(u)$ is any rational measured lamination fully carried by $\tau$, representing a maximal curve system and sufficiently close to $w$ in $\V(\tau)$ (or $\PV(\tau)$).   Then there exists $\tau_1\split_K\tau$,  $\tau_2\pinch \tau_1$ such that:

(i) $\hat \tau$ remains a subtrain track of $\tau_1$, and carries $K$, but the pinching of $\tau_1$ to $\tau_2$ also pinches $\hat\tau$ to yield $\hat \tau_2$,

(ii) we can express $\tau_2$ as a disjoint union $\tau_2=\hat\tau_2\sqcup \breve\tau_2$ where $\hat\tau_2$ is the pinching of $\hat\tau$ and fully carries $L$  while $\breve\tau_2$ is a collection of closed curves.  

(iii) $K$ can be decomposed as $\hat K\sqcup \breve K$ where $\hat K$ is fully carried by $\hat\tau_2$ and $\breve K$ is fully carried by $\breve\tau_2$.  (The only difference between $\breve\tau_2$ and $\breve K$ is that $\breve K$ is a weighted curve system while $\breve\tau$ is a curve system.)
\end{lemma}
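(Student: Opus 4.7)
The plan is to construct the data $K$, $\tau_1$, $\tau_2$ in three stages, exploiting the fact that $L$ sits entirely in the sub-train-track $\hat\tau$ while $K$ is only a small perturbation of $L$.

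First, I would choose $K$. Since $L=\tau(w)$ is fully carried by $\hat\tau$, the weight $w_e=0$ for every branch $e$ of $\tau\setminus\hat\tau$. By Lemma \ref{MaximalLemma}, maximal curve systems correspond to invariant weight vectors whose carrying has only digons/trigons/annuli-with-one-arc/pair-of-pants complements, an open condition, and rational weights are dense in the open cone of weights yielding full carrying. So I can take a rational $u\in\V(\tau)$ with $\|u-w\|$ as small as I want and $K=\tau(u)$ maximal; the entries $u_e$ for extra branches $e\in\tau\setminus\hat\tau$ are then positive but small.

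Second, I would analyze the structure of $K$ in $N(\tau)$. Embed $L$ and $K$ transverse to the fibers, with $L\subset N(\hat\tau)$. Each extra branch $e$ carries a narrow strip of $K$ of total width $u_e$ and no leaf of $L$. Each strand of $K$ running through $N(e)$ must close up as a component of a curve system; since $u$ is close to $w$ and $L$ fills $N(\hat\tau)$ with weights close to $u|_{\hat\tau}$, such a strand closes up by making a short detour through $N(\hat\tau)$ into a closed curve that is essentially ``$e$ capped off.'' After a small isotopy I get a decomposition $K=\hat K\sqcup\breve K$ where $\hat K\subset N(\hat\tau)$ is carried by $\hat\tau$ and $\breve K$ is a disjoint union of closed curves, each crossing exactly the extra branches needed to account for $u_e$. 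Maximality of $K$ (and the constraint of Lemma \ref{MaximalLemma}) ensures these closed curves are genuinely simple and disjointly embedded.

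Third, I would produce $\tau_1$ and $\tau_2$. Split $\tau$ respecting $K$ by deleting fibers of the interstitial bundle of $K$ in $N(\tau)$, doing so in a way that leaves $N(\hat\tau)$ unsplit at its frontier with the extra branches; concretely, the splits live in $N(e)$ and in the short detour regions of $N(\hat\tau)$ near the endpoints of extra branches. This yields $\tau_1\split_K\tau$ in which $\hat\tau$ remains a sub-train-track (possibly with some internal splittings inherited from the splits of $K$ inside $N(\hat\tau)$), fully carrying $\hat K$. Next, pinch $\tau_1$ at the detour regions: the extra branch $e$ together with its short detour arcs in $N(\hat\tau)$ becomes a closed circle $\breve\tau_2^{(e)}$, disconnected from the remainder. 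The remainder is a pinching $\hat\tau_2$ of $\hat\tau$; by the Splitting Lemma and the fact that pinching can only enlarge the class of fully carried laminations, $\hat\tau_2$ fully carries $L$. Thus $\tau_2=\hat\tau_2\sqcup\breve\tau_2$ with $\breve\tau_2=\bigsqcup_e\breve\tau_2^{(e)}$ a union of circles, and (iii) holds by construction of the decomposition $K=\hat K\sqcup\breve K$.

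The main obstacle is the local construction in the third paragraph: one must exhibit a simultaneous splitting and pinching that (a) preserves $\hat\tau$ as a sub-train-track of $\tau_1$, (b) yields a genuine disjoint union at stage $\tau_2$, and (c) leaves both the full carrying of $L$ by $\hat\tau_2$ and the full carrying of $\breve K$ by $\breve\tau_2$ intact. This reduces to an explicit, branch-by-branch model near each extra edge $e$, valid only because $K$ has been chosen sufficiently close to $L$ (so the detours are genuinely short and non-interfering). Once this local picture is verified, properties (i)--(iii) follow by direct inspection.
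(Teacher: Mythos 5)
Your first stage and the overall skeleton (split $\tau$ along $K$ away from $\hat\tau$, then pinch to disconnect the surplus into closed curves) do point in the same direction as the paper, but the proposal has a genuine gap at its central step. In the second paragraph you assert that each strand of $K$ crossing an extra branch $e$ ``closes up by making a short detour through $N(\hat\tau)$'' into a curve that is essentially $e$ capped off. Closeness of $u$ to $w$ only bounds the transverse \emph{measure} of $K$ on the branches of $\tau\setminus\hat\tau$; it puts no bound on the combinatorial length of the closed components of $K$ that cross those branches. Such a component can run parallel to $\hat\tau$ for an arbitrarily long time before returning to an extra branch --- the extreme case being a curve that spirals many times around a smooth closed curve of $\hat\tau$, which is exactly the Reeb configuration. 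Consequently the decomposition $K=\hat K\sqcup\breve K$ and the disjointness $\tau_2=\hat\tau_2\sqcup\breve\tau_2$ cannot be obtained by a local, branch-by-branch model near each extra edge; this is the actual content of the lemma, and your paragraphs two and three essentially assume it.

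The paper's proof confronts this head on. After the first splitting (removing the interstitial bundle of $K$ outside $\pi\inverse(\hat\tau)$), it classifies the attaching arcs $b$ of $\tau_1\setminus N(\hat\tau)$ by where and in what sense their ends attach, distinguishing S-shaped from U-shaped arcs. U-shaped arcs joining two circle components of $\bdry_hN(\hat\tau)$ are ruled out precisely because $\tau$ is \emph{essential} (they would produce a Reeb or half-Reeb train track), using Lemma \ref{MaximalLemma} to locate the configuration in an annulus; the remaining U-shaped arcs are removed by further $K$-splittings that are legitimate only because the weights of $u$ on $\hat\tau$ dominate those on $\tau\setminus\hat\tau$; and only then does a globally consistent choice of pinching sense around boundary components turn everything into S-arcs and finally into closed curves disjoint from $\hat\tau_2$. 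Your proposal never invokes the essentialness of $\tau$, which is an indispensable hypothesis here, and it uses maximality of $K$ only to conclude something ($\breve K$ is embedded) that is automatic for a curve system. A secondary point: the lemma is quantified over \emph{every} sufficiently close rational maximal $K$, whereas you construct one convenient $K$; for the continuity argument that follows the lemma one needs the conclusion for the given nearby points, not a chosen one. To repair the proposal you would need to replace the ``short detour'' picture with an argument that controls the long returns of $\breve K$ through $N(\hat\tau)$, which is where the Reeb hypothesis and the weight comparison must enter.
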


\begin{proof}  We embed the measured lamination $K=\tau(u)$ in $N(\tau)$ transverse to fibers with $\bdry_hN(\tau)\subset K$ after possibly replacing $K$ by the boundary of its regular neighborhood.  The proof will show how close $u$ must be to $w$ in $\V(\tau)$.  Note that the interstitial bundle $J$ for $K$ is compact if we use atomic measures on curves of $K$.  If $\pi:N(\tau)\to \tau$ is the projection, then we eliminate a portion of the interstitial bundle $J$ from $N(\tau)$ to achieve a splitting yielding $N(\tau_1)$, namely we remove $J-\pi\inverse(\hat\tau)$.    We then observe that $\tau_1-\hat\tau$ is a curve system, not necessarily properly embedded in $(\bar S,\alpha)$.  Some arcs of $\tau_1-\hat\tau$ may be attached to $\hat \tau$.   For convenience we enlarge $\hat \tau$, replacing it by $N(\hat \tau)$, so that we can think of the arcs of $\tau_1-\hat\tau$ as lying in the closure of the complement of $N(\hat \tau)$.  This first splitting has no effect on $\hat \tau$.

We consider possible types of arcs $b$ in $\tau_1-N(\hat\tau)$.  They can be classified by where each the ends lie:  (A) in an interval component of $\bdry_hN(\hat \tau)$;  (B) in a closed curve component of $\bdry_hN(\hat\tau)$; (C) in an interval component of $\alpha-N(\hat\tau)$; (D) in a closed curve component of  $\alpha-N(\tau)$.    There are different types of interval components of $\bdry_hN(\hat\tau_1)$, namely:  (A1) an arc with both ends in $\bdry_vN(\hat \tau)$; (A2) an arc with both ends in $\alpha$; (A3) an arc with one end in $\alpha$ and one end in $\bdry_vN(\hat\tau)$.   There are then 49 different types of arcs $b$ to consider.   In fact there are even more, depending on the sense of branching where $b$ is attached to $N(\hat \tau)$.  Our goal is to modify $\tau_1$ by pinching and $K$-splitting to render all arcs $b$ S-shaped as shown in Figure \ref{PairSFig}(a) (or the mirror image) while eliminating arcs $b$ which are U-shaped as shown in  Figure \ref{PairSFig}(b) (or the mirror image).

\begin{figure}[H]
\centering
\scalebox{1}{\includegraphics{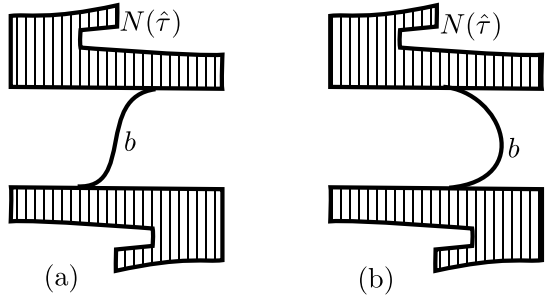}}
\caption{\small Arcs of $\tau_1\setminus \hat \tau$.}
\label{PairSFig}
\end{figure}

As a first step, if we have a U-shaped arc joining two $S^1$ components of $\bdry_hN(\hat \tau)$ then $ \tau_1$ contains a Reeb train, hence also $\tau$ contains a Reeb train track, a contradiction.   Here we are using Lemma \ref{MaximalLemma} to ensure that the U-shaped arc together with the two $S^1$'s are embedded in an annular subsurface of $S$.  We are also using the Splitting Lemma.

Next we describe a splitting which eliminates a U-shaped arcs if at least one end is directed toward either a component of $\bdry_vN(\hat \tau)$ or towards a component of $\alpha$.  If the initial weights of $u$ on segments of $\hat \tau$ are very large compared to the weights on segments of $\tau\setminus\hat\tau$, then clearly the same is true for the corresponding weights on $\hat\tau$ and $\tau_1$. Then the $K$-splitting shown in Figure \ref{PairRemoveUFig}(a), (b), or (c) eliminates the $U$-arc.  Figure \ref{PairRemoveUFig}(c) indicates that the  the  $K$-splitting  may remove a product of the interstitial bundle with both ends in $\bdry_vN(\tau_1)$, and may thus join two arcs of $\tau_1\setminus\tau$ or change an arc to a closed curve.     We remove all $U$-arcs of this kind using $K$-splitting to obtain a new $\tau_1$.  These splittings do not affect $\hat\tau$.  

\begin{figure}[H]
\centering
\scalebox{1}{\includegraphics{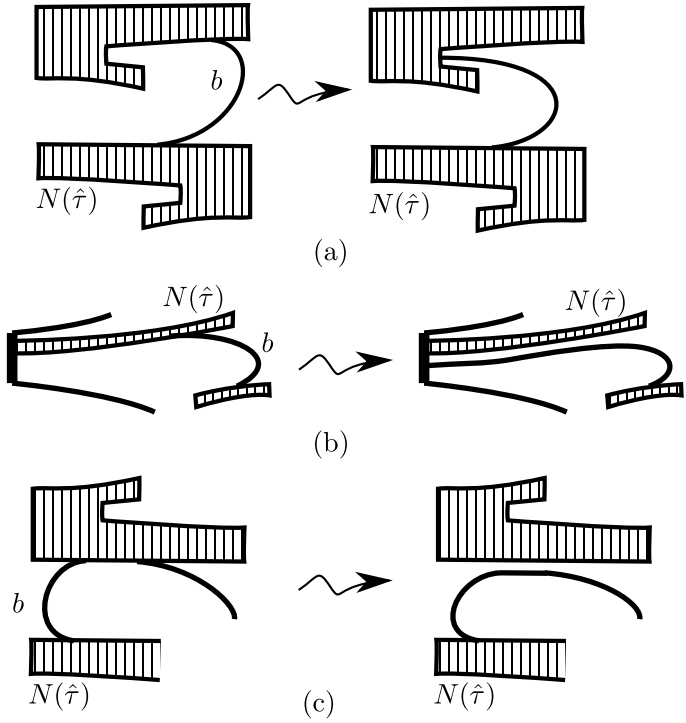}}
\caption{\small Splitting to remove U-shaped arcs $b$.}
\label{PairRemoveUFig}
\end{figure}

As a result of the splitting operations, arcs of $\tau_1-N(\hat\tau)$  will have at least one end in $\beta$ of the complementary surface pair $(\bar F,\beta)$ for the train track $\hat\tau$.  
We an appropriate pinching near each end of such an arc  to ensure that only S-arcs are introduced.  This can be be done 
by consistently pinching in the same sense around components of $\bdry F$, with the sense coming from the induced orientation on boundaries of $F$, see Figure \ref{PairSenseFig}.

\begin{figure}[H]
\centering
\scalebox{1}{\includegraphics{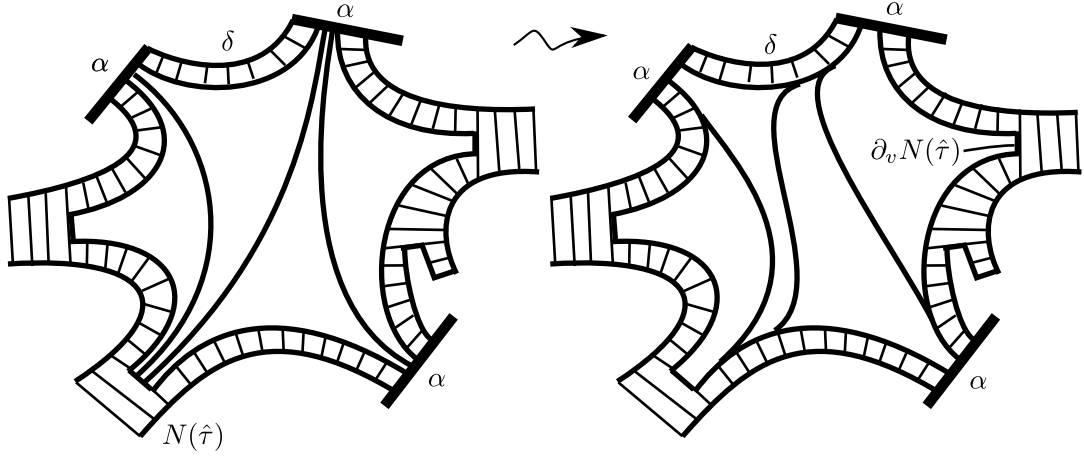}}
\caption{\small Repinching to get S-shaped arcs.}
\label{PairSenseFig}
\end{figure}

\begin{figure}[H]
\centering
\scalebox{1}{\includegraphics{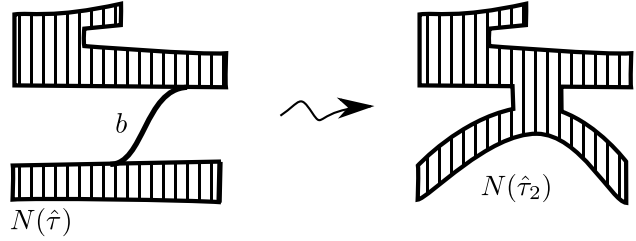}}
\caption{\small Pinching to eliminate S-shaped arcs.}
\label{PairSPinchFig}
\end{figure}

Finally, we pinch near $S$-arcs as indicated in Figure \ref{PairSPinchFig} to eliminate the $S$ arcs.  These pinchings together with the previous ones, yield $\tau_2$ as a pinching of $\tau_1$, and $\hat\tau_2$ the induced pinching of $\hat \tau$.  The remainder $\breve\tau_2=\tau_2\setminus \hat\tau_2$ consists of closed curves only.    Thus $K$ can be decomposed as a union $\hat K$ of closed curves fully carried by $\hat\tau_2$ together  with $\breve K$ a collection of closed curves carried by $\breve \tau_2$.  
\end{proof}

\begin{proposition}  Suppose $(\tau,\bdry \tau)\embed (\bar S,\alpha)$ is a fair train track, and $\gamma$ is the homotopy class of an arc or closed curve as above.  Then the function
$i_\gamma:\V(\tau)\to \reals$ is convex and  continuous on $\V(\tau)$.
\end{proposition}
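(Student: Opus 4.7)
The plan is as follows. The earlier propositions already give convexity on $\V_\rationals(\tau)$ and continuity on $\intr(\V(\tau))$, which in particular forces convexity on all of $\V(\tau)$ by density; what remains is to extend continuity to the boundary. A point $w_0 \in \partial\V(\tau)$ has support on a proper sub-train-track $\hat\tau \subsetneq \tau$, and I would proceed by induction on the number of segments of $\tau$, with the base case trivial (and the interior already handled).

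For upper semi-continuity at $w_0$, any nearby $w \in \V(\tau)$ can be written $w = w_0 + v$ with $v \in \V(\tau)$ small, and the subadditivity underlying Proposition \ref{ConvexProp} gives $i_\gamma(w) \le i_\gamma(w_0) + i_\gamma(v)$. Since a fixed transverse representative of $\gamma$ crosses $\tau$ only finitely many times, $i_\gamma$ is bounded on the standard simplex in $\V(\tau)$; combined with positive homogeneity this yields $i_\gamma(v) \to 0$ as $v \to 0$, so $\limsup_{w \to w_0} i_\gamma(w) \le i_\gamma(w_0)$. For lower semi-continuity, take $w_n \to w_0$ in $\V(\tau)$; by density of maximal rationals combined with interior continuity, it suffices to treat the case where each $w_n \in \V_\rationals(\tau)$ represents a maximal curve system. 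For $n$ large enough, Lemma \ref{ContinuationLemma} applies and produces $\tau_1^{(n)} \split_{K_n} \tau$ and a pinching $\tau_2^{(n)} \pinch \tau_1^{(n)}$ with $\tau_2^{(n)} = \hat\tau_2^{(n)} \sqcup \breve\tau_2^{(n)}$, where $\hat\tau_2^{(n)}$ is a pinching of $\hat\tau$ that still fully carries $L_0 := \tau(w_0)$, and $K_n = \hat K_n \sqcup \breve K_n$ with $\breve K_n$ a weighted collection of closed curves. Homotoping $\gamma$ to be efficient with respect to the disjoint union $\tau_2^{(n)}$ (as in the end of the proof of Proposition \ref{InjectiveProp}), the intersection number decomposes as
\[
 i_\gamma(w_n) = i_\gamma(K_n) = i_\gamma(\hat K_n) + i_\gamma(\breve K_n).
\]
The inductive hypothesis applied to $\hat\tau$ (strictly smaller complexity) gives continuity of $i_\gamma$ on $\V(\hat\tau)$, so $i_\gamma(\hat K_n) \to i_\gamma(L_0)$; and the weights on $\breve\tau_2^{(n)}$ tend to zero, so $i_\gamma(\breve K_n) \to 0$ by homogeneity. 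Therefore $i_\gamma(w_n) \to i_\gamma(w_0)$.

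The main obstacle lies in the convergence claims used above: one must verify that the decomposition produced by Lemma \ref{ContinuationLemma} indeed sends the portion of $w_n$ supported on $\tau \setminus \hat\tau$ into small weights on $\breve\tau_2^{(n)}$, and preserves the weights on $\hat\tau$ up to the pinching $\hat\tau \pinch \hat\tau_2^{(n)}$. This is precisely where the ``sufficiently close'' hypothesis of Lemma \ref{ContinuationLemma} enters: since $w_n \to w_0$ forces the weights on $\tau \setminus \hat\tau$ to shrink, the $K_n$-splittings and subsequent pinchings that manufacture $\tau_2^{(n)}$ redistribute only small total weight onto $\breve\tau_2^{(n)}$, while the dominant weights on $\hat\tau$ pass through unchanged. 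A careful bookkeeping through the U-arc removals, repinchings, and S-arc eliminations in the proof of Lemma \ref{ContinuationLemma} is needed to make this precise.
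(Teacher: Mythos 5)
Your core strategy --- approach a boundary point $w_0$ carried by a proper sub-train-track $\hat\tau$ via a maximal rational curve system, apply Lemma \ref{ContinuationLemma} to split off a disjoint closed-curve part $\breve\tau_2$, use additivity of $i_\gamma$ over the disjoint union $\hat\tau_2\sqcup\breve\tau_2$, kill the $\breve$-term as its weights shrink, and handle the $\hat$-term by interior continuity --- is exactly the paper's argument. The paper does not split into semicontinuities or induct on the number of segments; it fixes a single maximal rational $u=$ weight vector of $K$ near $w$, notes that the splitting and pinching induce a linear map of weight spaces, and computes the radial limit $i_\gamma[\tau((1-t)u+tw)]=i_\gamma[\breve\tau_2((1-t)\breve u)]+i_\gamma[\hat\tau_2((1-t)\hat u+t\hat w)]$ as $t\to 1$, using continuity of $i_\gamma$ on the interior of $\V(\hat\tau_2)$ for the second term.

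Two steps of your version need repair. First, the upper semicontinuity argument is flawed as stated: for $w$ near $w_0\in\bdry\V(\tau)$ you cannot in general write $w=w_0+v$ with $v\in\V(\tau)$, since $w-w_0$ satisfies the switch equations but need not be nonnegative (any coordinate where $w$ dips below $w_0$ ruins it); moreover the subadditivity from Proposition \ref{ConvexProp} is only established at rational points. The cheap fix is that upper semicontinuity is automatic for a convex function on a polyhedral cone, so the entire burden is the lower bound. Second, in your lower semicontinuity step the train tracks $\tau_2^{(n)}$, and hence the pinchings $\hat\tau_2^{(n)}$ of $\hat\tau$, depend on $n$, so ``continuity of $i_\gamma$ on $\V(\hat\tau)$'' is being invoked for a moving target; an inductive hypothesis about $\hat\tau$ does not obviously transfer to the varying cones $\V(\hat\tau_2^{(n)})$. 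The paper sidesteps this by working with one fixed $K=\tau(u)$ and one fixed $\tau_2$ and taking the limit along the segment from $u$ to $w$, where interior continuity on the single cone $\V(\hat\tau_2)$ (already proved --- no induction is needed) suffices. Your closing paragraph correctly isolates the remaining bookkeeping, namely that the splitting and pinching act linearly on weights, routing the $\tau\setminus\hat\tau$ portion of $u$ onto $\breve\tau_2$ and the dominant portion onto $\hat\tau_2$; the paper asserts exactly this as the existence of a linear map from a subspace of $\V(\tau)$ to $\V(\hat\tau_2)$.
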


\begin{proof}  We use Lemma \ref{ContinuationLemma}. Suppose $w\in \bdry \V(\tau)$, $w\ne 0$.  Then $L=\tau(w)$ is carried by a sub train track $\hat \tau$.  By splitting and pinching as in the lemma we obtain $L$ carried by $\tau_2=\hat\tau_2\cup \breve\tau_2$, fully carried by $\hat \tau_2$.   The measured lamination $L$ can be written as $\hat\tau_2(\hat w)$.   On the other hand the lamination $K$ of the lemma can be written as $\hat\tau_2(\hat u)\cup \breve\tau_2(\breve u)$.  Clearly, since $\hat\tau_2$ and $\breve\tau_2$ are disconnected, an intersection function $i_\gamma$ satisfies $i_\gamma(\hat\tau_2(\hat u)\cup \breve\tau_2(\breve u))=i_\gamma(\hat\tau_2(\hat u))+i_\gamma( \breve\tau_2(\breve u))$. Since the splitting and pinching give a  linear map from a subspace of $\V(\tau)$ to $\V(\hat\tau_2)$, we obtain the desired continuity of $i_\gamma$ at the arbitrary boundary point $w$ of $\V(\tau)$.  In detail, letting $L=\hat\tau_2(\hat w)$, we get $i_\gamma[\tau((1-t)u+tw)]=i_\gamma[\breve\tau_2((1-t)\breve u]+i_\gamma[\hat\tau_2((1-t)\hat u+t\hat w)]$, and we prove continuity by letting $t$ approach 1.  The intersection $i_\gamma[\hat\tau_2((1-t)\hat u+t\hat w)]$ then converges to $i_\gamma[(\hat\tau_2(\hat w)]$ from the convexity, hence continuity of $i_\gamma$ in the interior of $\V(\hat\tau_2)$. \end{proof}

\begin{proof}[Proof of Theorem \ref{MainThm}]  (We will deal with the projective space $\PM(\bar S,\alpha)$  here.)  The standard train tracks with respect to the decomposition $\D$ form a closed system of train tracks.  This means that one can form a quotient of all the projective weight cells $\PV(\tau)$, $\tau\in \T$ in a natural way by identifying a face (of any dimension) $\PV(\hat\tau)$ of $\PV(\tau_1)$ corresponding to a sub train track $\hat \tau$ of $\tau_1$ with a face  $\PV(\hat\tau)$ of $\PV(\tau_2)$ corresponding to a sub train track $\hat \tau$ of $\tau_2$ whenever $\tau_1$ and $\tau_2$ have a common sub train track $\hat \tau$.  This gives the quotient which we called $\PM_\D(\bar S,\alpha)$.  Thus pasting the continuous restrictions of $\I=(i_\gamma)$ to the weight cells of standard train tracks, we obtain a continuous bijection from $\PM_\D(\bar S,\alpha)\to \PM(\bar S,\alpha)$.  This proves the two spaces are homeomorphic.
\end{proof}

\nocite{JLH:Stability}
\bibliographystyle{amsplain}
\bibliography{ReferencesUO3}
\end{document}